\def\be#1{\begin{equation}\label{#1}}
\def\bas{\begin{align*}}
\def\eas{\end{align*}}
\def\bi{\begin{itemize}}
\def\ei{\end{itemize}}
\newenvironment{proof}{\noindent {\bf Proof} }{\endprf\par}
\def \endprf{\hfill  {\vrule height6pt width6pt depth0pt}\medskip}
\def\emph#1{{\it #1}}
\def\textbf#1{{\bf #1}}
\theoremstyle{plain}
   \newtheorem{theorem}[subsection]{Theorem}
   \newtheorem{proposition}[subsection]{Proposition}
   \newtheorem{lemma}[subsection]{Lemma}
\theoremstyle{remark}
\theoremstyle{definition}
\begin{document}

\author{James Wright}
\address{Maxwell Institute of Mathematical Sciences and the School of Mathematics, University of
Edinburgh, JCMB, King's Buildings, Mayfield Road, Edinburgh EH9
3JZ, Scotland} \email{J.R.Wright@@ed.ac.uk}


\subjclass{11A07, 11L07, 11L40, 42B20}


\title[Exponential sums and polynomial congruences in two variables]
{Exponential sums and polynomial congruences in two variables: the
quasi-homogeneous case}

\begin{abstract}
We adapt ideas of Phong, Stein and Sturm and ideas
of Ikromov and M\"uller 
from the continuous setting
to various discrete settings, obtaining sharp bounds for exponential
sums and the number of solutions to polynomial
congruences for general quasi-homogeneous polynomials in two
variables. This extends work of Denef and Sperber
and also Cluckers
regarding a conjecture of Igusa in
the two dimensional setting by no longer requiring the polynomial
to be nondegenerate with respect to its Newton diagram.

\end{abstract}

\maketitle
\section{Introduction}
Let $f\in {\Bbb Z}[X,Y]$ be a quasi-homogeneous polynomial in two
variables by which we mean there exist two positive numbers
$\kappa_1, \kappa_2 > 0$ so that $f(r^{\kappa_1}x, r^{\kappa_2} y)
= r f(x,y)$ for every $r\ge 0$.
Our goal is to give sharp uniform bounds on the exponential sums
$$
{\mathcal S}(f;p^s) \ = \ \frac{1}{p^{2s}} \sum_{x \, {\rm mod} \,
p^s}\sum_{y \, {\rm mod} \, p^s} e^{2\pi i f(x,y)/p^s}
$$
where the parameter $p^s$ is a power of a prime number $p$. We
will also obtain precise estimates for the number of solutions to
the polynomial congruence $f(x,y) = 0$ mod $p^s$.


We will be particularly interested in estimates
of the form
\begin{equation}\label{basic-sum}
|{\mathcal S}(f;p^s)| \ \le \ C s^{i(f)} p^{-s/h(f)}
\end{equation}
and such uniform estimates
will be proved for almost every prime $p$ 
where $C$ is an absolute constant depending only on the
degree of $f$; that is, there is an exceptional finite set of primes ${\mathcal
P}(f)$ and a constant $C=C_{{\rm deg}(f)}$
such that \eqref{basic-sum} holds for every
$p\notin {\mathcal P}(f)$. In fact in almost every case the
exponents $h(f)$ and $i(f)$ will be the same
as those arising in the best uniform estimates
for the corresponding euclidean oscillatory integrals
\begin{equation}\label{osc-euclid}
\Bigl| \int\!\!\!\int_{{\Bbb R}^2} e^{2\pi i \lambda f(x,y)} \phi(x,y) dx dy
\Bigr| \ \le \ C \, \bigl[\log(|\lambda|)\bigr]^{i(f)} |\lambda|^{-1/h(f)}
\end{equation}
where the {\it height} of $f$ is defined as $h(f) :=
\sup_z \{d_z(f)\}$, the supremum being taken over all
smooth local coordinate systems $z=(x,y)$ of the origin
and $d_z$ denotes the {\it Newton distance} of $f$ in the
coordinates $z$. See Section \ref{prelim} for precise definitions
of these notions as well as an explicit, intrinsic description of the
height $h(f)$ when $f$ is a quasi-homogeneous polynomial
(if $f(x,y) = ax^j y^k$ is single monomial,
then $h(f) = \max(j,k)$ and when $f$ consists of more than one monomial, the homogeneity dilation parameters $\kappa_1$, $\kappa_2$ are
uniquely determined by $f$; in this case
$h(f)$ can be described explicitly in terms of
$\kappa_1$, $\kappa_2$
and the maximum multiplicity of the real roots of $f$).
The exponent $i(f)$ is sometimes referred to as Varchenko's exponent
or the {\it multiplicity of oscillation} of $f$ and takes
only the values $0$ or $1$; it is always equal to $0$ except
when $h(f)\ge 2$ and the {\it principal face}
of $f$ in {\it adapted coordinates}\footnote{a local coordinate
system $z$ where the supremum defining the height is
achieved; that is, $h(f) = d_z$} is a vertex of the
Newton diagram in which case we set $i(f) = 1$.
Again, in Section \ref{prelim} we
will give precise definitions of these notions
and an explicit, intrinsic description of the
exponent $i(f)$. The estimate \eqref{osc-euclid} is
sharp in the sense that
\begin{equation}\label{osc-sharp}
\lim_{\lambda \to +\infty} \frac{\lambda^{1/h(f)}}{\log^{i(f)}(\lambda)}
\int\!\!\!\int_{{\Bbb R}^2} e^{2\pi i \lambda f(x,y)} \phi(x,y) dx dy \ =
\ c \, \phi(0,0)
\end{equation}
for some nonzero constant $c$ if the support of $\phi$ is sufficiently
small and if the principal face of $f$ in adapted coordinates
is a compact set. For proofs of \eqref{osc-euclid} and \eqref{osc-sharp},
see for example \cite{IM-2} where these results are established for any
smooth real-valued phase $f$ of finite-type.

It turns out that the uniform estimates in \eqref{basic-sum},
discrete analogues of \eqref{osc-euclid},
hold
for every quasi-homogeneous polynomial $f \in {\Bbb Z}[X,Y]$
{\it except} for a
single family of degenerate $f$ of the form
$$
f(x,y) \ \ = \ \ a(b y^2 + cxy + d x^2)^m
$$
where $m\ge 2$ and the quadratic polynomial $b y^2 + c xy + d x^2$ is
irreducible over the rationals ${\Bbb Q}$. In this case
\eqref{basic-sum} holds with the same decay parameter $h(f)$ but
now the $0-1$ valued exponent $i(f) = i_p(f)$ depends on the
prime $p$. For example when $f(x,y) = a(y^2 - 2 x^2)^m$
and $m\ge 2$, it turns
out that $i_p(f) = 1$ when $p \equiv 1$ or $7$ mod $8$ and $i_p(f)
= 0$ when $p \equiv 3$ or $5$ mod $8$.

We denote by $E_m$ the class of functions
$f(x,y) = a(b y^2 + cxy + d x^2)^m$ with
$b y^2 + c xy + d x^2$ irreducible over ${\Bbb Q}$.
Hence when $m\ge 2$, $E_m$ is an exceptional class where
the direct discrete analogue of the euclidean
estimate \eqref{osc-euclid} does not hold. When we turn
to counting solutions of polynomial congruences,
$E_m$ will be an exceptional class
for all $m\ge 1$.
An explanation
of how the classes $E_m$ arise
is given before the statement of Theorem \ref{main-abstract} below.


We also obtain
a version of \eqref{osc-sharp} for ${\mathcal S}(f;p^s)$
in the following theorem.


\begin{theorem}\label{main-sum} For any quasi-homogeneous polynomial
$f\in {\Bbb Z}[X,Y]$,
there is a finite
collection ${\mathcal P}(f)$ of prime numbers and constants
$c, C >
0$, depending only on the degree of $f$, so that
for every prime $p \notin {\mathcal P}(f)$
and $f$ not in any exceptional class $E_m$ with $m\ge 2$,
$$
|{\mathcal S}(f;p^s)| \ \le \ C s^{i(f)} p^{-s/h(f)}
$$
holds and if furthermore $f(x) \not= ax + by$,
\begin{equation}\label{sum-infinite}
c s^{i(f)} p^{-s/h(f)} \ \le \ |{\mathcal S}(f; p^s)|
\end{equation}
holds for infinitely many $s\ge 1$.

When $f$ belongs to some $E_m, \ m\ge 2$,
the above estimates still hold but now $i(f) = i_p(f)$ depends on $p$; more precisely $i_p(f) = 1$ or
$0$ depending on whether the roots of $f$ (a conjugate pair of
algebraic numbers of degree 2 over ${\Bbb Q}$) lie in the $p$-adic
field ${\Bbb Q}_p$ or not, respectively.

\end{theorem}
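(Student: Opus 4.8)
We describe the structure of the argument; the technical heart is the Newton-polygon analysis developed below.

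\emph{Reduction to annuli.} Write ${\mathcal S}(f;p^{s})=\int_{{\Bbb Z}_p^{2}}\psi_{s}(f(x,y))\,dx\,dy$, where $\psi_{s}$ is the additive character of ${\Bbb Q}_p/p^{-s}{\Bbb Z}_p$ and $dx\,dy$ is the Haar probability measure on ${\Bbb Z}_p^{2}$, and decompose ${\Bbb Z}_p^{2}$ into the two coordinate axes and the annuli $A_{\alpha,\beta}=p^{\alpha}{\Bbb Z}_p^{\times}\times p^{\beta}{\Bbb Z}_p^{\times}$, $\alpha,\beta\ge 0$. Since $f$ is quasi-homogeneous there are positive integers $n_{1},n_{2},d$ with $f(p^{n_{1}}x,p^{n_{2}}y)=p^{d}f(x,y)$, so the substitution $x=p^{\alpha}u$, $y=p^{\beta}v$ on $A_{\alpha,\beta}$ gives $f(p^{\alpha}u,p^{\beta}v)=p^{\rho(\alpha,\beta)}f_{\alpha,\beta}(u,v)$, where $\rho$ is the support function of the Newton polygon ${\mathcal N}(f)$ (linear on each of its normal cones) and the leading part of $f_{\alpha,\beta}$ on $({\Bbb Z}_p^{\times})^{2}$ is the face polynomial attached to the face of ${\mathcal N}(f)$ selected by $(\alpha,\beta)$, whose factorization over $\overline{{\Bbb Q}_p}$ records the roots of that face. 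The contribution of $A_{\alpha,\beta}$ to ${\mathcal S}(f;p^{s})$ is therefore $p^{-\alpha-\beta}$ times a complete exponential sum in $u,v$ — trivial when $\rho(\alpha,\beta)\ge s$, and of modulus $p^{s-\rho(\alpha,\beta)}$ otherwise — which, for all but finitely many $p$ (those dividing the leading coefficients, the resultants and discriminants of the quasi-homogeneous factors of $f$, and the multiplicities of its roots), is an honest Gauss- or Kloosterman-type sum, estimated by the classical evaluations together with the Weil--Deligne bounds.

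\emph{The logarithmic factor and the upper bound.} The bound $|{\mathcal S}(f;p^{s})|\le Cs^{i(f)}p^{-s/h(f)}$ is the quasi-homogeneous case of the general estimate proved below (Theorem~\ref{main-abstract}). The factor $s^{i(f)}$ comes from the ``deep'' annuli, those with $\rho(\alpha,\beta)\ge s$, where the character is trivial: after passing to adapted coordinates (see below), summing $p^{-\alpha-\beta}$ over the deep lattice points gives one extra power of $s$ precisely when the principal face of ${\mathcal N}(f)$ is a vertex $(h,h)$, $h=h(f)$ — for then $\rho(\alpha,\beta)=h(\alpha+\beta)+O(1)$ on a two-parameter cone and $\sum_{h(\alpha+\beta)\ge s+O(1)}p^{-\alpha-\beta}\asymp s\,p^{-s/h}$ — and no extra power when the principal face is an edge. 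The non-deep annuli $\rho(\alpha,\beta)<s$ contribute $O(p^{-s/h})$, but establishing this requires genuine cancellation among their Gauss sums rather than the triangle inequality; this is where the ideas of Phong--Stein--Sturm and Ikromov--M\"uller, transplanted to the $p$-adic setting, do the work.

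\emph{Sharpness.} If the principal face in adapted coordinates is a vertex, the deep contribution isolated above is a sum of strictly positive reals of exact order $s\,p^{-s/h}$; since the non-deep annuli contribute only $O(p^{-s/h})$, we obtain $|{\mathcal S}(f;p^{s})|\asymp s\,p^{-s/h}$ for all large $s$, hence \eqref{sum-infinite} with $i(f)=1$. If the principal face is an edge, so $i(f)=0$, the deep contribution is only of order $p^{-s/h}$, comparable to the non-deep one; here one identifies the coefficient of $p^{-s/h}$ in ${\mathcal S}(f;p^{s})$ as a generalized trigonometric polynomial $\Phi(s)=\sum_{j}c_{j}\omega_{j}^{s}$ with $|\omega_{j}|=1$, bounded in $s$, and one shows $\Phi\not\equiv 0$ (which one verifies directly, e.g.\ from the non-cancelling contribution of the bisectrix point of the edge), so that $|\Phi(s)|\ge\delta>0$ along an arithmetic progression of $s$, giving \eqref{sum-infinite}. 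The excluded case $f(x,y)=ax+by$ is genuinely exceptional, since ${\mathcal S}(f;p^{s})=0$ whenever $p^{s}\nmid\gcd(a,b)$.

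\emph{Exceptional classes and the main obstacle.} For $f=a(by^{2}+cxy+dx^{2})^{m}\in E_{m}$, $m\ge 2$, the quadratic is irreducible over ${\Bbb Q}$ but over ${\Bbb Q}_p$ it splits if and only if its discriminant is a square in ${\Bbb Q}_p^{\times}$, i.e.\ if and only if the conjugate pair of roots of $f$ lies in ${\Bbb Q}_p$. In the split case one writes, for almost all $p$, $by^{2}+cxy+dx^{2}=b(y-\mu x)(y-\bar\mu x)$ with $\mu,\bar\mu\in{\Bbb Z}_p$, and the admissible ${\Bbb Z}_p$-linear change $y\mapsto y+\mu x$ turns $f$ into $ab^{m}y^{m}(y+(\mu-\bar\mu)x)^{m}$, whose Newton polygon — now in coordinates adapted over ${\Bbb Q}_p$ — has the vertex $(m,m)$ as its principal face; the vertex analysis then applies and yields $|{\mathcal S}(f;p^{s})|\asymp s\,p^{-s/m}$, so $i_p(f)=1$. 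In the inert case no such change exists over ${\Bbb Q}_p$, the principal face remains the midpoint of the edge joining $(0,2m)$ to $(2m,0)$, and the edge analysis gives $|{\mathcal S}(f;p^{s})|\gg p^{-s/m}$ along a subsequence, so $i_p(f)=0$; this is the stated dichotomy. The principal obstacle throughout is the non-adapted case: carrying out the discrete analogue of the Ikromov--M\"uller resolution and controlling, uniformly in $p$ and $s$, the error terms introduced by passing to adapted coordinates, together with the combinatorial fact that $E_{m}$ — for $m\ge 2$ here, and for every $m\ge 1$ in the companion result on congruences — is the only quasi-homogeneous family whose adapting coordinate change fails to be defined over ${\Bbb Q}$, and hence over ${\Bbb Z}_p$ for all but finitely many $p$.
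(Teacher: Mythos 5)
Your outline captures the right global architecture --- the annulus decomposition, the triviality of the character on the deep region, the role of the Newton polygon and of adapted coordinates, and the correct explanation of the $E_m$ dichotomy via splitting of the quadratic over ${\Bbb Q}_p$. But it has a genuine gap at its center: the assertion that the non-deep annuli contribute $O(p^{-s/h(f)})$ \emph{is} the theorem's content on those annuli, and you defer it entirely to ``ideas of Phong--Stein--Sturm and Ikromov--M\"uller transplanted to the $p$-adic setting'' without saying what those ideas do. In the paper the mechanism is concrete: on the diagonal annuli ($tk_2=rk_1$) quasi-homogeneity separates variables after a substitution of the form $y=u^r z$, reducing matters to the one-variable polynomial $h(z)=az^{\beta}\prod_j(z-\zeta_j)^{n_j}$; the level sets $\{|h(z)|=q^{-\ell}\}$ are then described exactly by the nonarchimedean Phong--Stein--Sturm structure theorem (Proposition \ref{PS}) as disjoint unions of sets $\{|z-\zeta_j|=q^{-\ell/n_j}\}$ taken only over roots $\zeta_j$ lying in ${\Bbb Q}_p$ (Krasner's lemma, Lemma \ref{krasner}, excludes the others); and the inner integral either vanishes (property (C2), a Hensel/stationary-phase computation), is trivial, or is a Gauss sum of modulus $\lesssim q^{-1/2}$ (property (C3)). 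None of this is in your proposal, and your invocation of ``Weil--Deligne bounds'' for the annulus sums is off target: after normalization these are complete sums modulo $p^{s'}$ with $s'$ large, where the relevant input is stationary phase, not finite-field cohomology (finite-field estimates enter only at the boundary level $s'=1$). Note also that in the paper's original coordinates the factor $s$ arises partly from the deep terms ($II_1$, $III_1$ when a vertex lies on the bisectrix) and partly from the near-root piece $I_{2,1}$ of the \emph{non}-deep diagonal term; your picture becomes accurate only after the adapting change of variables, which itself must be justified over ${\Bbb Z}_p$ --- outside $E_m$ this rests on Lemma \ref{IM-2} (a multiplicity $\ge d(f)$ forces a rational root and $t=1$), a fact you use implicitly but neither cite nor prove.

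The lower bound has a parallel gap. In the edge case you rest on ``$\Phi\not\equiv 0$, which one verifies directly'': the danger is cancellation between the positive deep contribution and the oscillatory boundary terms ($I_{2,2}$, $II_2$, $III_2$ in the paper's notation), which can have modulus comparable to $p^{-s/h(f)}$ and whose phases you do not control. The paper disposes of them by restricting $s$ to an explicit congruence class modulo $NM_1M_2P$ where these terms vanish identically or are at most $\epsilon\, p^{-s/h(f)}$ for $p\notin{\mathcal P}(f)$; some such argument, uniform in $p$, is required before one can conclude \eqref{sum-infinite}. As written, the proposal is a plausible strategy outline rather than a proof.
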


When $f(x) = ax + by$ is linear such that either
$p^s \not| \, a$ or $p^s \not| \, b$, then ${\mathcal S}(f;p^s) = 0$
and so no lower bound as in \eqref{sum-infinite} holds
in this case. The finite collection ${\mathcal P}(f)$
of exceptional primes which we will work with is a
specific collection which we will describe
precisely in Section \ref{exceptional} below.
The sequence of $s\ge 1$ where the lower
bound \eqref{sum-infinite} holds can be chosen to depend
only on $f$; more precisely, if $f$ is not a single
monomial, then there is a pair $1\le t\le r$
of relatively prime integers, determined by the
dilation parameters $\kappa_1, \kappa_2 > 0$, such
that $f(x,y) = a x^{\alpha} y^{\beta} Q(x^r, y^t)$
for some homogeneous polynomial $Q(u,v)$ of degree $n\ge 1$;
see Section \ref{prelim} below. Then
\eqref{sum-infinite} holds for example for any $s\ge 1$
satisfying $s \equiv 0$ mod $N M_1 M_2 P$ where
$N = t\alpha + r\beta + rtn$, $M_1 = {\rm gcd}(\beta,\alpha + rn)$,
$M_2 = {\rm gcd}(\alpha,\beta + tn)$ and $P$ equals
to the product of the multiplicities of the roots of $Q$.

For quasi-homogeneous polynomials $f\in {\Bbb Z}[X_1,\ldots,X_n]$
in arbitrary number of variables, Denef and Sperber \cite{DS}
and Cluckers \cite{C},
\cite{C-2} have established the estimate \eqref{basic-sum}
when\footnote{there is one exceptional case here; when
$f(x,y) = a x (y - \zeta x^r)$ (or the symmetric example
with $x$ and $y$ interchanged), $h(f)=1$ and so $i(f) = 0$
but the bisectrix passes through the vertex $(1,1)$ and so
the estimates in \cite{DS} or \cite{C-2}, strictly speaking,
carry a linear factor
of $s$.} $f$ is nondegenerate with respect to its Newton diagram which
is related to certain conjectures of Igusa found in \cite{I}
(we remark that any $f$ in an exceptional class $E_m$
for some $m\ge 2$ is degenerate with respect to its Newton diagram).
The estimates in Theorem \ref{main-sum} extend their work in
the two variable setting to arbitrary quasi-homogeneous polynomials.
In fact in \cite{DS}, Denef and Sperber make a conjecture for
general homogeneous polynomials (extended to quasi-homogeneous
polynomials by Cluckers) and Theorem \ref{main-sum} verifies this
conjecture in the two variable setting.
The lower bound
\eqref{sum-infinite} shows the general sharpness of the estimate
with respect to $p$ and $s$.
Sharp estimates for arbitrary quasi-homogeneous polynomials
have been obtained previously by Cluckers \cite{C-3}
in the case when $s=1$ or $s=2$, again for polynomials in any
number of variables.

We turn our attention now to polynomial congruences. Whenever a pair
of integers $(x,y)$ satisfies the congruence $f(x,y) \equiv 0$ mod
$n$, then so does $(x+jn, y+kn)$ for any $(j,k)\in {\Bbb Z}^2$.
Therefore a solution to the congruence $f\equiv 0$ mod $n$ is
defined to be an element in the ring ${\Bbb Z}/n{\Bbb Z} \times
{\Bbb Z}/n{\Bbb Z}$ and if $\# \{f\equiv 0 \, {\rm mod} \, n\}$
denotes the total number of solutions, we will examine the
normalised number of solutions
$$
{\mathcal N}(f;n) \ := \ n^{-2} \# \{ f \equiv 0 \, {\rm mod} \,
n\}.
$$
The quantity ${\mathcal N}(f; n)$ is a multiplicative function of
$n$ and so matters are reduced to studying ${\mathcal N}(f; p^s)$
for powers of a fixed prime $p$. Not surprisingly we obtain similar
estimates for ${\mathcal N}(f;p^s)$ which are direct analogues
of ones arising in euclidean sublevel set estimates which we will
not write down explicitly. In the euclidean situation the decay
parameter $h(f)$ remains the same but Varchenko's exponent needs
slight modification: we define $\nu(f) = 0$ in every case except
when the principal face of $f$ in adapted coordinates is a vertex
of the Newton diagram in which case we set $\nu(f) = 1$. So
the only difference between $i(f)$ and $\nu(f)$ occurs when $h(f) < 2$.

\begin{theorem}\label{main-cong}
For any quasi-homogeneous polynomial $f\in {\Bbb Z}[X,Y]$,
there is a finite collection of prime numbers ${\mathcal P}(f)$
and constants $C,c,c' >0$, depending only on the degree of $f$,
so that for any $p\notin {\mathcal P}(f)$
and $f \notin E_m$ for any $m\ge 1$,
\begin{equation}\label{basic-cong}
c s^{\nu(f)} p^{-s/h(f)} p^{-2} \ \le \ {\mathcal N}(f;p^s) \ \le \
C s^{\nu(f)} p^{-s/h(f)}
\end{equation}
holds and
\begin{equation}\label{cong-infinite}
c' s^{\nu(f)} p^{-s/h(f)} \ \le \ {\mathcal N}(f; p^s)
\end{equation}
holds for infinitely many $s\ge 1$.

When $f$ lies in some $E_m$ with $m\ge 1$, the estimates
\eqref{basic-cong} and \eqref{cong-infinite} still hold but
the exponent $\nu(f)=\nu_p(f)$ now depends on $p$;
more precisely $\nu_p(f) = 1$ or
$0$ depending on whether the roots of $f$ (a conjugate pair of
algebraic numbers of degree 2 over ${\Bbb Q}$) lie in the $p$-adic
field ${\Bbb Q}_p$ or not, respectively.
\end{theorem}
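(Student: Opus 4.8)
The plan is to realise ${\mathcal N}(f;p^s)$ as a $p$-adic sublevel-set measure and then estimate it by an explicit resolution of $f$ adapted to its quasi-homogeneity, mirroring in ${\Bbb Z}_p^2$ the real-variable arguments of Phong, Stein and Sturm and of Ikromov and M\"uller. Since any two representatives of a residue class modulo $p^s$ give values of $f$ congruent modulo $p^s$, one has
$$
{\mathcal N}(f;p^s) \ = \ \mu_p\bigl(\{(x,y)\in{\Bbb Z}_p^2 : v_p(f(x,y))\ge s\}\bigr),
$$
where $\mu_p$ is the normalised Haar measure on ${\Bbb Z}_p^2$ and $v_p$ is the $p$-adic valuation; thus \eqref{basic-cong} and \eqref{cong-infinite} are the exact discrete analogues of the euclidean sublevel estimates alluded to before the statement, and one expects the same decay exponent $h(f)$. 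A Möbius-type identity, ${\mathcal N}(f;p^s)=p^{-s}\bigl(1+\sum_{j=1}^{s}{\mathcal S}^{*}(f;p^{j})\bigr)$ with ${\mathcal S}^{*}(f;p^{j})$ a suitably normalised sum over primitive additive characters modulo $p^{j}$, makes transparent why $h(f)$ is inherited from Theorem \ref{main-sum} while the logarithmic exponent changes from $i(f)$ to $\nu(f)$ --- the latter no longer carrying the requirement $h(f)\ge 2$, which is precisely why the exceptional classes for congruences include $E_1$, absent from the list for exponential sums.

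To estimate the sublevel set I would use the normal form of Section \ref{prelim}. When $f$ is not a single monomial, write $f(x,y)=a\,x^{\alpha}y^{\beta}\,Q(x^{r},y^{t})$ with $Q$ homogeneous of degree $n$ and $1\le t\le r$ coprime, so that $f(p^{t}x,p^{r}y)=p^{N}f(x,y)$ with $N=t\alpha+r\beta+rtn$; hence $\{v_p(f)\ge s\}$ is self-similar under $(x,y)\mapsto(p^{t}x,p^{r}y)$ and it suffices to control one fundamental annulus together with this dilation. Factoring $Q$ over $\overline{{\Bbb Q}}$ displays the branches of $\{f=0\}$: the two coordinate axes (with multiplicities read off from $\alpha$, $\beta$ and from the powers of $u,v$ dividing $Q$), and for each root $c_{j}$ of $Q$ a branch $y^{t}=c_{j}x^{r}$ of multiplicity $m_{j}$. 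For all but finitely many $p$ --- the unit and separability conditions defining ${\mathcal P}(f)$ in Section \ref{exceptional} --- the annulus splits into a generic piece, on which $v_p(f)$ is a fixed piecewise-affine function of $(v_p(x),v_p(y))$, plus one cuspidal piece per branch, on which $v_p(y^{t}-c_{j}x^{r})$ is large.

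On a branch piece whose root becomes available over ${\Bbb Q}_p$ (after the prescribed $t$-th root extraction), a change of variables brings $f$ to the resolved form $(\text{unit})\cdot x^{a}w^{b}$, and the sublevel measure of a monomial is elementary:
$$
\mu_p\bigl(\{v_p(x^{a}w^{b})\ge s\}\bigr)\ =\ \sum_{\substack{i,j\ge 0\\ ia+jb\ge s}}(1-p^{-1})^{2}p^{-i-j}\ \asymp\ s^{\,\varepsilon}\,p^{-s/\max(a,b)},
$$
with $\varepsilon=1$ if $a=b$ and $\varepsilon=0$ otherwise (in the first case the minimum of $i+j$ on the constraint is attained along a segment of $\asymp s$ lattice points, in the second at a single vertex). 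Summing the generic contribution ($\asymp p^{-s(t+r)/N}$) and the branch contributions through the self-similarity, the dominant term determines the decay; matching the resulting exponent with the intrinsic formula for $h(f)$ recalled in Section \ref{prelim} gives $p^{-s/h(f)}$, while the factor $s^{\nu(f)}$ appears precisely when the optimal piece is a branch whose resolved monomial has $a=b$ --- equivalently when the principal face in adapted coordinates is a vertex. The upper bound in \eqref{basic-cong} then follows by crude summation over the finitely many pieces; the matching lower bound follows by keeping only the optimal piece and, within it, only solutions with one variable in a fixed residue class, at a cost of at most $p^{-2}$; and \eqref{cong-infinite} follows by taking $s$ in the progression $s\equiv 0$ mod $NM_{1}M_{2}P$ described after Theorem \ref{main-sum}, along which the constraint is exactly saturated and no power of $p$ is wasted. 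The single-monomial case $f=ax^{j}y^{k}$ is the displayed computation with exponents $(j,k)$ in place of $(a,b)$, and the linear case $f=ax+by$ is immediate.

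The hard part is the branch pieces for which the passage to adapted coordinates is \emph{not} available over ${\Bbb Q}$ --- equivalently, those arising from a power of a form irreducible over ${\Bbb Q}$ --- which is exactly what produces the exceptional classes $E_{m}$. For $f=a(by^{2}+cxy+dx^{2})^{m}$ the quadratic splits over ${\Bbb Q}_p$ if and only if its discriminant $c^{2}-4bd$ is a square in ${\Bbb Q}_p^{\times}$, a condition that for $p\notin{\mathcal P}(f)$ depends only on $p$ modulo a fixed modulus by quadratic reciprocity, and this is exactly when the roots $y/x$ of $f$ lie in ${\Bbb Q}_p$. When it splits, the two branches of multiplicity $m$ are ${\Bbb Q}_p$-rational, the resolved monomials are $(\text{unit})\cdot x^{m}w^{m}$, and one obtains $\nu_{p}(f)=1$; when it does not split, the form is anisotropic over ${\Bbb Q}_p$, so $v_p(by^{2}+cxy+dx^{2})=2\min(v_p(x),v_p(y))$ identically and hence $v_p(f)=2m\min(v_p(x),v_p(y))$, whose sublevel set has measure $p^{-2\lceil s/2m\rceil}\asymp p^{-s/m}$ with no factor of $s$ --- the edge rather than the vertex case --- so $\nu_{p}(f)=0$; in both cases $h(f)=m$ is unchanged. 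Making this dichotomy quantitative, controlling the anisotropic valuation and all the $O(1)$ valuation losses, and verifying that every implied constant depends only on $\deg f$, is the technical heart of the argument; the remaining branch pieces, where $Q$ factors into ${\Bbb Q}_p$-rational linear factors after the $t$-th root extraction, are handled by the monomial computation above, following the two-dimensional adapted-coordinate scheme of Ikromov and M\"uller transplanted from ${\Bbb R}^{2}$ to ${\Bbb Z}_p^{2}$.
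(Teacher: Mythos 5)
Your proposal is correct and follows essentially the same route as the paper: realise ${\mathcal N}(f;p^s)$ as a $p$-adic sublevel-set measure, decompose ${\Bbb Z}_p^2$ according to the valuations of $x$ and $y$ using the quasi-homogeneous scaling $f(p^tx,p^ry)=p^Nf(x,y)$, reduce the diagonal contribution to a one-variable sublevel problem near the roots $\zeta_j$ (which for $p\notin{\mathcal P}(f)$ are pairwise at unit distance, so the Phong--Stein--Sturm cluster structure degenerates to disjoint balls of radius $\delta^{1/n_j}$, i.e.\ your resolved-monomial computation), and settle the $E_m$ dichotomy by whether the conjugate roots lie in ${\Bbb Q}_p$. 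Your anisotropic identity $v_p(by^2+cxy+dx^2)=2\min(v_p(x),v_p(y))$ is a tidy shortcut to the $\nu_p(f)=0$ case that the paper instead obtains by observing ${\mathcal I}_{\mathfrak p}=\emptyset$ and bounding $I_1+II_1+III_1$ directly, but the substance is the same.
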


Simple examples show that the factor $p^{-2}$ in the lower
bound in \eqref{basic-cong} cannot be replaced by $p^{-1}$;
for instance if
$f(x,y) = y^4 - 2 x^6$,
then
$h(f) = 12/5$, $\nu(f) = 0$ and the analysis
in Section \ref{homogeneous} shows that
${\mathcal N}(f;p^s) \le c p^{-5s/12} p^{-19/12}$ if
$s \equiv 1$ mod 12 and either $p\equiv$ 3 or 5
mod 8. See Section \ref{I+II+III} for details.
Nevertheless there is a natural large class of quasi-homogeneous
polynomials where the factor $p^{-2}$ can be replaced
by $p^{-1}$; see the comments after the statement of
Theorem \ref{main-abstract} below.
When $f(x) = ax + by$ is linear, the situation
of polynomial congruences differs from the situation
of exponential sums. In this case, ${\mathcal N}(f;p^s) =
p^{-s}$ if either $p \not| \, a$ or $p\not| \, b$,
$h(f) = 1$ and $\nu(f) = 0$ so that
\eqref{basic-cong} and \eqref{cong-infinite} hold.

As we will see the proofs of Theorems \ref{main-sum} and
\ref{main-cong}
are very elementary, relying on a sharp structural statement
for the solution set of general polynomial congruences
of a single variable
found in \cite{W-1}; see also \cite{W-2}. This result
is a nonarchimedean
version of a result of Phong, Stein and Sturm \cite{PSS}
about polynomial sublevel sets in euclidean spaces.
The result in \cite{W-1} is valid in general
settings and Theorems \ref{main-sum} and \ref{main-cong}
generalise accordingly.

Let ${\mathfrak o}$ be any ring endowed with a nontrivial discrete
valuation $|\cdot|$ so that $|x|\le 1$ for every $x\in {\mathfrak
o}$. Let us suppose that the nonzero prime ideal
${\mathfrak p} := \{x\in {\mathfrak o}: |x|<1\}$ is maximal
in ${\mathfrak o}$ such that the localisation of ${\mathfrak o}$
to ${\mathfrak p}$
is the valuation ring
$\{x\in K : |x| \le 1 \}$
of the field of fractions
$K$ of ${\mathfrak o}$ induced by $|\cdot|$.
The valuation ring
has a unique maximal ideal generated by a prime element $\pi$
which may assume lies in ${\mathfrak o}$.
We make the finiteness assumption that the residue class
field ${{\mathfrak o}}/{{\mathfrak p}}$ is finite, say
with $q=p^f$ elements where $p$ is prime, and we normalise
the valuation so that $|\pi| = q^{-1}$.

The maximality of ${\mathfrak p}$ implies that the fields
${\mathfrak o}/{\mathfrak p} \simeq  {\bar{\mathfrak o}}/\pi
{\bar{\mathfrak o}}$
are isomorphic where
${\bar {\mathfrak o}}$ denotes the completion of ${\mathfrak o}$
with respect to $|\cdot|$.
Furthermore the field of
fractions of ${\bar{\mathfrak o}}$ is ${\bar K}$, the
completion of $K$ with respect to $|\cdot|$, and
the valuation extends uniquely to ${\bar K}$.
Finally ${\bar{\mathfrak o}}$ is the valuation ring
of ${\bar K}$ with respect to $|\cdot|$; that is,
${\bar{\mathfrak o}} = \{x\in {\bar K}: |x|\le 1 \}$.

Our finiteness hypothesis on the residue class field
implies that ${\bar K}$ is a local field. Hence ${\bar K}$ is a
finite field extension of the $p$-adic field ${\mathbb Q}_p$ (in
the characteristic 0 case) or the field ${\mathbb F}_p((\pi))$ of
Laurent series with coefficients in the field ${\mathbb F}_p$ of
integers modulo $p$ (in the positive characteristic case); in the
latter case we can be more explicit, namely ${\bar K} = {\mathbb
F}_q ((\pi))$ where $q = p^f$ is defined above as the number of
elements in the residue class field. If $n$ is the degree of
${\bar K}$ over ${\mathbb Q}_p$ or ${\mathbb F}_p((\pi))$, then $n
= e f$ where $f$, defined above, is the residual degree and the
exponent $e$ is the ramification index of this extension. In the
characteristic 0 case, viewing ${\mathbb Z}$ as a subring of
${\mathfrak o}$ or ${\bar{\mathfrak o}}$, we have $p = \pi^e u$
for some unit $u$ in ${\bar{\mathfrak o}}$.

Elements $x\in {\bar{\mathfrak o}}$ have a unique power series
representation $x = \sum_{j\ge 0} x_j \pi^j$ with the $x_j$ lying
in a fixed set of representations of the elements of the residue class field
${\bar{\mathfrak o}}/\pi {\bar{\mathfrak o}}$. Like the
prime element $\pi$, the representations $\{x_j\}$ in
${\bar{\mathfrak o}}$ of the residue class field
can be chosen from the ring ${\mathfrak o}$
itself. For these elementary facts about
discrete valuation rings, see for example \cite{L} or \cite{L-2}.

The basic example is the ring of rational integers ${\mathfrak  o}
= {\Bbb Z}$ endowed with the $p$-adic valuation $|\cdot|_p$ for
some prime $p$. This is the setting of Theorems \ref{main-sum} and \ref{main-cong}.
More generally one can consider any Dedekind domain ${\mathfrak
o}$ with the finiteness property (FP) that the class fields
${\mathfrak o}/{\mathfrak p}$ are finite for all nonzero prime
ideals ${\mathfrak p}$. In this setting each nonzero prime ideal
${\mathfrak p}$ is maximal and gives rise to a discrete valuation
$|\cdot|_{\mathfrak p}$; in additive notation this valuation ${\rm
ord}_{\mathfrak p}$ is defined on ${\mathfrak o}$ so that
${\mathfrak p}^{{\rm ord}_{\mathfrak p}(x)}$ is the ${\mathfrak
p}$ factor in the prime ideal decomposition of the principal ideal
$x {\mathfrak o}$ generated by $x\in {\mathfrak o}$.
Furthermore the valuation ring
$\{x\in K: |x|_{\mathfrak p} \le 1 \}$ of the field
of fractions $K$ of ${\mathfrak o}$ is the localisation
of ${\mathfrak o}$ to ${\mathfrak p}$ and so we are in the
setting described in the previous paragraphs.

We denote by ${\bar{\mathfrak o}}_{\mathfrak p}$ the completion of
${\mathfrak o}$ with respect to the valuation arising from
${\mathfrak p}$ and we denote by $\pi_{\mathfrak p} \in {\mathfrak
o}$ the prime element generating the unique maximal ideal of
${\bar{\mathfrak o}}_{\mathfrak p}$. When the residue class field
${\mathfrak o}/{\mathfrak p}$ is finite, say with $q_{\mathfrak
p}$ elements, then via the isomorphism ${\mathfrak o}/{\mathfrak
p} \to {\bar{\mathfrak o}}_{\mathfrak p}/\pi_{\mathfrak p}
{\bar{\mathfrak o}}_{\mathfrak p}$, we see that the multiplicative
valuation $|x|_{\mathfrak p} := q_{\mathfrak p}^{-{\rm
ord}_{\mathfrak p}(x)}$, extended uniquely to ${\bar{\mathfrak
o}}_{\mathfrak p}$, is automatically normalised with
$|\pi_{\mathfrak p}|_{\mathfrak p} = q_{\mathfrak p}^{-1}$ or
${\rm ord}_{\mathfrak p}(\pi_{\mathfrak p}) = 1$.

In the setting of Dedekind domains with the finiteness property
(FP) many results from elementary number theory in ${\Bbb Z}$ have
analogues in this more abstract setting; see for example \cite{N}.
In a similar way there are analogous results of Theorems \ref{main-sum} and
\ref{main-cong}. Instead of $f\in {\Bbb Z}[X,Y]$, we consider
polynomials $f\in {\mathfrak o}[X,Y]$ where ${\mathfrak o}$ is any
Dedekind domain with the finiteness property (FP). As before, a
solution to the polynomial congruence $f \equiv 0$ mod ${\mathfrak
i}$ where ${\mathfrak i}$ is a nonzero ideal of ${\mathfrak o}$,
is defined to be an element in the class ring ${\mathfrak
o}/{\mathfrak i}$ and this ring is finite by the finiteness
property (FP). If we denote by $\|{\mathfrak i}\|$ the number of
elements of ${\mathfrak o}/{\mathfrak i}$, we study the normalised
number of solutions to the polynomial congruence $f\equiv 0$ mod
${\mathfrak i}$
$$
{\mathcal N}(f,{\mathfrak i}) \ := \ \|{\mathfrak i}\|^{-2} \# \{
f(x,y) \equiv 0 \, {\rm mod} \ {\mathfrak i} \} .
$$
If ${\mathfrak i} = \prod {\mathfrak p}^{{\rm ord}_{\mathfrak
p}({\mathfrak i})}$ is the prime ideal decomposition of the ideal
${\mathfrak i}$, then basic isomorphism theorems show
$$
{\mathcal N}(f,{\mathfrak i}) \ = \ \prod_{{\mathfrak p} |
{\mathfrak i}} {\mathcal N}(f, {\mathfrak p}^{{\rm ord}_{\mathfrak
p}({\mathfrak i})})
$$
where ${\mathcal N}(f, {\mathfrak p}^s) = q_{\mathfrak p}^{-2s} \#
\{ f\equiv 0 \ {\rm mod} \ {\mathfrak p}^s  \}$; see for example
\cite{L-2}. Therefore matters are reduced to the case when the
ideal ${\mathfrak i} = {\mathfrak p}^s$ is a power of a fixed
prime ideal ${\mathfrak p}$.

In this more abstract setting of Dedekind domains we also introduce and
study character sums which are generalisations of
the exponential sums ${\mathcal S}(f;p^s)$ over the integers ${\Bbb Z}$.
For a fixed nonzero prime ideal ${\mathfrak p}$, we consider
a nonprincipal additive character $\chi$ of the factor ring
${\mathfrak o}/{\mathfrak p}^s$ which we will assume to be
a {\it primitive} character in the sense that there exists
an element $y \in {\mathfrak o}$ with $|y|_{\mathfrak p} =
q_{\mathfrak p}^{-s+1}$
and so that $\chi(y + {\mathfrak p}^s) \not= 1$ (if no such element
exists, then $\chi$ would restrict to a nonprincipal character
of the factor ring ${\mathfrak o}/{\mathfrak p}^{s-1}$). For
an $f\in {\mathfrak o}[X,Y]$ (which by reducing the coefficients
mod ${\mathfrak p}^s$, we may view $f$ as a polynomial
with coefficients in ${\mathfrak o}/{\mathfrak p}^s$), we set
$$
{\mathcal S}_{\chi}(f;{\mathfrak p}^s) \ := \ q_{\mathfrak p}^{-2s} \, \sum\!\!\!\!\!\!\!\!\!\sum
\limits_{(x,y)\in [{\mathfrak o}/{\mathfrak p}^s]^2}
\chi(f(x,y))
$$
and, as in the setting of
${\mathfrak o} = {\Bbb Z}$, our main interest will be
to obtain bounds for ${\mathcal S}_{\chi}(f;{\mathfrak p}^s)$
and ${\mathcal N}(f;{\mathfrak p}^s)$ which are uniform over all
nonzero prime ideals ${\mathfrak p}$ and exponents $s$ when
$f\in {\mathfrak o}[X,Y]$ is a quasi-homogeneous polynomial; that
is, $f$ satisfies $f(r^{\kappa_1}x,r^{\kappa_2} y) = r f(x,y)$ for
some positive numbers $\kappa_1, \kappa_2 > 0$.

We now introduce the height $h(f)$ and Varchenko's exponents
$i(f)$ and $\nu(f)$ but appeal to the explicit description of
these parameters alluded to above, avoiding the original definitions
in terms of local coordinates. When  $f(x,y) = a x^{\alpha} y^{\beta}$
consists of a single monomial, we set as before
$h(f) = \max(\alpha,\beta)$. Furthermore we set
$i(f) = \nu(f) = 0$ when $\alpha \not= \beta$,
$\nu(f) = 1$ when $\alpha = \beta$ and
$i(f) = 1$ if $\alpha=\beta \ge 2$ but $i(f) = 0$
when $f(x,y) = a x y$. When
$f$ consists of more than one monomial, then $\kappa_1$ and $\kappa_2$
are uniquely determined by $f$ (see Lemma \ref{IM} below); of course
there is a continuum of choices for $\kappa_1$ and $\kappa_2$
when $f$ is a single monomial.
Recall that $K$ denotes the field of
fractions of ${\mathfrak o}$ and by
${\bar K}_{\mathfrak p}$, we denote
the field of fractions of ${\bar{\mathfrak o}}_{\mathfrak p}$. If
${\mathfrak o} = {\Bbb Z}$ and ${\mathfrak p} = p{\Bbb Z}$ for
some prime $p$, then $K = {\Bbb Q}$ and ${\bar K}_{\mathfrak p} =
{\Bbb Q}_p$.

Suppose now that $f$ consists of more than one monomial.
We will see that the zero set $\{f(x,y) = 0
\}$ of $f$ over some field extension of $K$
is a finite union of algebraic curves
or {\it roots of $f$} which can be enumerated by a certain
sequence of algebraic elements $\{\zeta_j\}$ over $K$, each {\it root}
$\zeta_j$ comes with an associated multiplicity or order $n_j$.
We define $m_K(f) := \max\{ n_j : \zeta_j \in K \}$, the maximal order
of the roots of $f$ over $K$, and following
\cite{IM}, we call $d(f) := (\kappa_1 + \kappa_2)^{-1}$
the {\it homogeneous distance} of $f$. Finally (as in \cite{IM})
we define the height of $f$ as
$$
h(f) \ := \ \max(m_K(f),d(f)).
$$
In Section \ref{prelim}, we will see that when ${\mathfrak o} =
{\Bbb Z}$, this definition of height coincides with the
euclidean definition in terms of the supremum of Newton distances.
In fact a result of Ikromov and M\"uller in \cite{IM} shows
that in the euclidean setting, the original definition
of the height $h(f)$ is equal $\max(m_{\Bbb R}(f),d(f))$ when
$f\in {\Bbb R}[X,Y]$ is any quasi-homogeneous polynomial with
real coefficients. Here $m_{\Bbb R}(f)$ is the maximal order of
the roots of $f$ over ${\Bbb R}$, instead of being over $K={\Bbb Q}$,
and therefore larger than $m_{\Bbb Q}(f)$ when $f\in {\Bbb Z}[X,Y]$.
Nevertheless taking the maximum with the homogeneous distance
$d(f)$ is the same; that is $h(f)$ is unchanged,
$h(f) =  \max(m_{\Bbb Q}(f),d(f))
= \max(m_{\Bbb R}(f),d(f))$. See Section \ref{prelim} for details.

Another result of Ikromov and M\"uller shows that the
Varchenko exponent $\nu(f)$ is equal to $0$ if $m_{\Bbb R}(f) \not= d(f)$
and $\nu(f) = 1$ if $m_{\Bbb R}(f) = d(f)$ when
$f\in{\Bbb R}[X,Y]$ is quasi-homogeneous (we recall that
the difference between $\nu(f)$ and $i(f)$ occurs only
when $h(f)<2$). We will see in Section \ref{prelim} that
in the setting of ${\mathfrak o} = {\Bbb Z}$, the dichotomy
$m_{\Bbb R}(f) = d(f)$ or $m_{\Bbb R}(f) \not= d(f)$ which
determines the exponents $i(f)$ and $\nu(f)$ is
exactly the same as $m_{\Bbb Q}(f) = d(f)$ or $m_{\Bbb Q}(f) \not= d(f)$ for
every quasi-homogeneous polynomial $f\in {\Bbb Z}[X,Y]$
except for the classes $E_m$. This explains
how the exceptional class $E_m$ arises and
indicates why the exponent of the linear factor $s$
in \eqref{basic-sum} depends on the prime $p$ for these
special polynomials.

In the abstract
setting of Dedekind domains ${\mathfrak o}$, we define
$\nu(f) = 0$ if $m_K(f) \not= d(f)$ and $\nu(f) = 1$ if
$m_K(f) = d(f)$. Furthermore we set $i(f) = \nu(f)$
except when $h(f) < 2$ where we always set $i(f) = 0$.
As in Theorems \ref{main-sum} and \ref{main-cong},
we obtain uniform estimates for
${\mathcal N}(f;{\mathfrak p}^s)$ and ${\mathcal S}_{\chi}(f;{\mathfrak p}^s)$
except when $f(x,y) = a (bx^2 + cxy + dy^2)^m$ for some
$m\ge 1$ and where the quadratic
polynomial $b x^2 + cxy + dy^2$  is irreducible over $K$
(the exceptional classes $E_m$ are restricted to $m\ge 2$
for the character sum ${\mathcal S}_{\chi}$). We will
continue to refer to these exceptional classes as $E_m$.

\begin{theorem}\label{main-abstract}
Let $f\in {\mathfrak o}[X,Y]$ be a quasi-homogeneous polynomial
with coefficients lying in a Dedekind domain ${\mathfrak o}$ with
property (FP). If the characteristic of ${\mathfrak
o}$ is positive, we assume that it is larger than the degree
of $f$. Then
there is a finite collection
${\mathcal P}(f)$ of prime ideals of ${\mathfrak o}$ and constants
$c', c, C > 0$, depending only on the degree of $f$, so that
for any
$f$ not in any exceptional class $E_m$ (and
$m\ge 2$ for ${\mathcal S}_{\chi}$),
\begin{equation}\label{main-sum-est-abstract}
|{\mathcal S}_{\chi}(f;{\mathfrak p}^s)| \ \le \ C s^{i(f)} q_{\mathfrak
p}^{-s/h(f)}
\end{equation}
and
\begin{equation}\label{main-poly-est-abstract}
c \, s^{\nu(f)} q_{\mathfrak p}^{-s/h(f)} q_{\mathfrak p}^{-2} \ \le
\ {\mathcal N}(f;{\mathfrak p}^s) \ \le \ C s^{\nu(f)} q_{\mathfrak
p}^{-s/h(f)}
\end{equation}
hold for every nonzero prime ideal ${\mathfrak p} \notin
{\mathcal P}(f)$ and $s\ge 1$. Furthermore for
${\mathfrak p} \notin {\mathcal P}(f)$,
\begin{equation}\label{abstract-poly-infinite}
c' \, s^{\nu(f)} q_{\mathfrak p}^{-s/h(f)} \ \le
\ {\mathcal N}(f;{\mathfrak p}^s)
\end{equation}
and, if also $f(x) \not= a x + b y$,
\begin{equation}\label{abstract-sum-infinite}
c' \, s^{i(f)} q_{\mathfrak p}^{-s/h(f)} \ \le
\ |{\mathcal S}_{\chi}(f;
{\mathfrak p}^s)|
\end{equation}
hold for infinitely many $s\ge 1$.

When $f$ belongs to some class $E_m$ (and $m\ge 2$
for ${\mathcal S}_{\chi}$), the
estimates \eqref{main-sum-est-abstract}, \eqref{main-poly-est-abstract},
\eqref{abstract-poly-infinite} and
\eqref{abstract-sum-infinite}
still hold but now the exponents
$i_{\mathfrak p}(f), \nu_{\mathfrak p}(f)$ depend on the
prime ideal ${\mathfrak p}$; precisely, if
$m\ge 2$, then $i_{\mathfrak p}(f) =
\nu_{\mathfrak p}(f) = 1$ or $0$ depending on whether the
two conjugate roots of $f$ lie in ${\bar K}_{\mathfrak p}$ or not,
respectively. If $m=1$, then $E_1$ is {\bf not} an exceptional
class for character sums (we have $i(f) = 0$ when $f\in E_1$)
but it is an exceptional class
for the problem of polynomial congruences; in this
case $\nu_{\mathfrak p}(f)$ depends on the prime ideal
${\mathfrak  p}$ and is defined as in the case $m\ge 2$.


\end{theorem}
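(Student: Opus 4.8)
The plan is to use the quasi-homogeneous scaling to reduce both ${\mathcal S}_{\chi}(f;{\mathfrak p}^s)$ and ${\mathcal N}(f;{\mathfrak p}^s)$ to a family of one-variable problems indexed by the $\pi_{\mathfrak p}$-adic size of the two variables, and then to feed each of these into the structural description of one-variable polynomial congruence solution sets from \cite{W-1} (the nonarchimedean analogue of Phong--Stein--Sturm \cite{PSS}). Since ${\mathcal S}_{\chi}$ and ${\mathcal N}$ depend only on $f$ modulo ${\mathfrak p}^s$, I would first pass to the completion and work over the valuation ring ${\mathcal O} = {\bar{\mathfrak o}}_{\mathfrak p}$ with uniformiser $\pi = \pi_{\mathfrak p}$ and residue field of cardinality $q = q_{\mathfrak p}$. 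The single-monomial case $f = a x^{\alpha}y^{\beta}$ is treated directly: writing $x = \pi^{k}u$, $y = \pi^{\ell}v$ with $u,v$ units, the sum over the residual variables is an elementary Gauss-type sum, and the factors $s^{i(f)}$, $s^{\nu(f)}$ appear as the number of admissible pairs $(k,\ell)$ for which $k\alpha + \ell\beta$ falls in the critical range; this reproduces the stated values of $i(f)$ and $\nu(f)$, and in particular the dichotomy between $\alpha=\beta$ and $\alpha\neq\beta$, and between $\alpha=\beta\ge 2$ and $f = axy$.

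For $f$ with more than one monomial I would use Lemma~\ref{IM} to write $f(x,y) = a x^{\alpha}y^{\beta} Q(x^r,y^t)$ with $\gcd(r,t)=1$ and $Q$ homogeneous of degree $n$, so that $\kappa_1 = t/N$, $\kappa_2 = r/N$, $N = t\alpha + r\beta + rtn$, and $d(f) = N/(r+t)$. Over an unramified extension of ${\bar K}_{\mathfrak p}$ of degree at most $\deg f$ --- here the positive-characteristic hypothesis, characteristic exceeding $\deg f$, guarantees separability --- $Q$ factors into linear forms, and the zero set of $f$ splits into the root curves $\{y^t = \zeta_j x^r\}$ of orders $n_j$ together with the coordinate axes to prescribed orders. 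The roots lying in ${\bar K}_{\mathfrak p}$ are the ones visible over the base, and for every prime outside a finite set the set $\{n_j:\zeta_j\in{\bar K}_{\mathfrak p}\}$ coincides with $\{n_j:\zeta_j\in K\}$, so $m_{{\bar K}_{\mathfrak p}}(f) = m_K(f)$ there; the single exception is the class $E_m$, where $Q = (bx^2+cxy+dy^2)^m$ has one irreducible quadratic root and visibility of that root is governed by whether its discriminant is a square in ${\bar K}_{\mathfrak p}$, equivalently by whether the two conjugate roots of $f$ lie in ${\bar K}_{\mathfrak p}$. Decomposing $[{\mathcal O}/{\mathfrak p}^s]^2$ according to the valuations of $x$ and $y$ and rescaling each shell by the quasi-homogeneity, the shell contribution factors through a lower-level problem, and the surviving terms are those on which $(x,y)$ lies within the natural scale of one of the root curves. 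Near such a curve one introduces the transverse variable $w = y^t - \zeta_j x^r$; the vanishing of $f$, respectively the phase $\chi(f)$, is then to leading order $c\,w^{n_j}$ times an explicit monomial, and the one-variable result of \cite{W-1} supplies the sharp size of the corresponding sublevel set and the sharp cancellation bound. (The explicit identifications of $h(f)$, $i(f)$, $\nu(f)$ used here are those recorded in Section~\ref{prelim}.)

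Summing the shell contributions is then a geometric-series and arithmetic-progression bookkeeping: the shells clustering near the origin contribute $\asymp q^{-s/d(f)}$ while the neighbourhood of the highest-multiplicity visible root curve contributes $\asymp q^{-s/m_K(f)}$, so the whole is controlled by $q^{-s/h(f)}$ with $h(f) = \max(m_K(f),d(f))$; an extra factor comparable to $s$ arises precisely when the two scales coincide and the associated face is compact, that is when $m_K(f) = d(f)$, which gives $\nu(f)$, and for the character sum the further requirement $h(f)\ge 2$ --- needed so that the resonant terms are not annihilated by cancellation --- gives $i(f)$. For $f\in E_m$ the same computation applies verbatim with $m_{{\bar K}_{\mathfrak p}}(f)$ in place of $m_K(f)$: it equals $m$ when the conjugate roots lie in ${\bar K}_{\mathfrak p}$ and $0$ otherwise, forcing the stated ${\mathfrak p}$-dependence of $\nu_{\mathfrak p}(f)$; when $m\ge 2$ one has $h(f) = m\ge 2$ so $i_{\mathfrak p}(f) = \nu_{\mathfrak p}(f)$, whereas when $m = 1$ one has $h(f) = 1 < 2$ so $i(f) = 0$ identically and $E_1$ is not exceptional for ${\mathcal S}_{\chi}$. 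The lower bounds \eqref{abstract-poly-infinite} and \eqref{abstract-sum-infinite} follow by choosing $s \equiv 0$ modulo $N M_1 M_2 P$, with $M_1 = \gcd(\beta,\alpha+rn)$, $M_2 = \gcd(\alpha,\beta+tn)$ and $P$ the product of the root multiplicities of $Q$, so that the rescalings land exactly on resonant shells and $\asymp s$ of them contribute a common nonzero main term; the exceptional set ${\mathcal P}(f)$ consists of the residue characteristics not exceeding $\deg f$ together with the primes dividing $a$, the discriminant of $Q$, and the resultants separating the distinct roots.

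The main obstacle is pinning down the sharp power of $s$ uniformly in ${\mathfrak p}$: one must show that the resonant shell contributions genuinely accumulate rather than cancel, which for the character sum is delicate and is exactly what the condition $h(f)\ge 2$ in the definition of $i(f)$ is there to ensure, and which relies on the precise normal form for the one-variable phase furnished by \cite{W-1}; and one must verify that, outside a finite set of primes and outside the classes $E_m$, the root structure of $f$ over ${\bar K}_{\mathfrak p}$ --- in particular $m_{{\bar K}_{\mathfrak p}}(f)$ and the compactness of the principal face --- matches that over $K$, so that the exponents $i(f)$, $\nu(f)$ are genuinely ${\mathfrak p}$-independent there.
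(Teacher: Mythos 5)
Your proposal follows essentially the same route as the paper: the Denef--Sperber decomposition of $[{\mathfrak o}/{\mathfrak p}^s]^2$ into shells by the valuations of $x$ and $y$, quasi-homogeneous rescaling to separate variables, reduction near each root curve to a one-variable polynomial whose sublevel sets are described by the nonarchimedean Phong--Stein--Sturm result of \cite{W-1}, the dichotomy $m_K(f)$ versus $d(f)$ producing the factor $s^{\nu(f)}$, the visibility of roots in ${\bar K}_{\mathfrak p}$ governing the classes $E_m$, and the congruence restriction $s\equiv 0$ mod $NM_1M_2P$ for the lower bounds. This matches the paper's argument in structure and in all the key ingredients.
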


As we have already mentioned, there are simple examples which show that the factor $q_{\mathfrak p}^{-2}$
in the lower bound in \eqref{main-poly-est-abstract} cannot
be replaced by $q_{\mathfrak p}^{-1}$. However if $m_K(f) \ge d(f)$,
then the factor $q_{\mathfrak p}^{-2}$ in
\eqref{main-poly-est-abstract} can be replaced by $q_{\mathfrak p}^{-1}$.

In the generality of Theorem \ref{main-abstract}, Cluckers
\cite{C-2} has proved the main estimate \eqref{main-sum-est-abstract} for quasi-homogeneous
polynomials in any number of variables which are non-degenerate
with respect to the Newton diagram (and in \cite{C-3}
for general quasi-homogeneous
polynomials when $s=1$ or $s=2$). In fact we will appeal
to Cluckers' result for certain cases when $h(f) < 2$. Alternatively
one can use more precise finite field character sum estimates
at the appropriate places in the arguments below.

{\it Acknowledgement}: We wish to thank Tony Carbery for some
motivating discussions at the beginning of these investigations.
Also we would like to thank Ben Lichtin for comments leading to a
more precise formulation of the main results.

\section{Notation and preliminaries}\label{prelim}
For any polynomial $g\in {\mathfrak o}[X,Y]$, $g(x,y) = \sum_{j,k}
c_{j,k} x^j y^k$, we call the set ${S}(g) := \{(j,k)\in
{\Bbb N}^2 : c_{j,k} \not= 0 \}$, the support of $g$. The {\it
Newton polyhedron} $\Delta(g)$ of $g$ is the convex hull of the
union of all quadrants $(j,k) + {\Bbb R}^2_{+}$ in ${\Bbb R}^2$
with $(j,k)\in {S}(g)$. If we use coordinates $(t_1,t_2)$
for points in the plane containing the Newton polyhedron, consider
the point $(d_{*},d_{*})$ in this plane where the bisectrix $t_1 = t_2$
intersects the boundary of $\Delta(g)$. The coordinate
$d_{*}$ is called the Newton distance of $g$ in the
coordinates $z = (x,y)$.

We turn our attention to quasi-homogeneous polynomials
$f\in {\mathfrak o}[X,Y]$ so that $f(r^{\kappa_1} x, r^{\kappa_2}
y) = r f(x,y)$ for some positive $\kappa_1, \kappa_2 > 0$ and all
$r\ge 0$. When $f(x,y) = a x^{\alpha}y^{\beta}$ is a single monomial,
the conclusions
of Theorem \ref{main-abstract} are easily verified in this case.
For the convenience of the reader we give the simple analysis in an appendix, see Section \ref{monomial}.
Therefore from now on (until the last section),
we assume that $f$ consists of more than one monomial. In
this case, it turns out that the dilation parameters
$\kappa_1$ and $\kappa_2$ are uniquely determined by $f$.

Recall that
$d(f) = (\kappa_1 + \kappa_2)^{-1}$ is the homogeneous
distance of $f$ and without loss of generality we will assume
$\kappa_2 \ge \kappa_1$. We record in the following lemma some
elementary facts about quasi-homogeneous polynomials observed in
\cite{IM}.

\begin{lemma}\label{IM} Let $f$ be a quasi-homogeneous
polynomial with dilation parameters $\kappa_1, \kappa_2 >0$
satisfying $\kappa_2 \ge \kappa_1$ and consisting
of more than one monomial. Then the exponents $\kappa_1 = t/m$, $\kappa_2 = r/m$ are rational numbers,
uniquely determined by $f$ with $gcd(r,t) = gcd(r,t,m) = 1$ (the
condition $\kappa_2 \ge \kappa_1$ means $r\ge t \ge 1$).
Furthermore $f(x,y) = x^{\alpha} y^{\beta} Q(x^r, y^t)$ for some
homogeneous polynomial $Q\in {\mathfrak o}[X,Y]$,
$$
Q(w_1, w_2) \ = \ a w_2^n + c_{n-1} w_2^{n-1} w_1 + \cdots + c_1
w_2 w_1^{n-1} + b w_1^n
$$
with $a,b \not= 0$. Factoring $Q(1,w) = a \prod_{j=1}^M (w -
\zeta_j)^{n_j}$ with respect to its distinct roots $\{\zeta_j\}$
lying in some extension field $L$ of $K$, we may write
\begin{equation}\label{f}
f(x,y) \ = \ a x^{\alpha} y^{\beta} \prod_{j=1}^M (y^t - \zeta_j
x^r)^{n_j} .
\end{equation}
Setting $n := \sum_{j=1}^M n_j$, we have
$$
d (f) \ = \ \frac{1}{\kappa_1 + \kappa_2} \ = \ \frac{t \alpha
+ r \beta + t r n}{t+ r}.
$$
\end{lemma}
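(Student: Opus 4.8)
The plan is to extract everything from the single linear relation that quasi-homogeneity imposes on the support of $f$. Writing $f(x,y) = \sum_{(j,k)} c_{j,k} x^j y^k$ and comparing $f(r^{\kappa_1}x,r^{\kappa_2}y) = \sum_{(j,k)} c_{j,k}\, r^{\kappa_1 j + \kappa_2 k}\, x^j y^k$ with $r\, f(x,y)$, one reads off that $\kappa_1 j + \kappa_2 k = 1$ for every $(j,k)\in S(f)$; thus the whole support lies on a single affine line $\ell$. Since $f$ has at least two monomials, $\ell$ is pinned down by two distinct support points, so $(\kappa_1,\kappa_2)$ is uniquely determined by $f$; subtracting the two relations also gives $\kappa_1/\kappa_2\in{\Bbb Q}$, so writing $\kappa_1/\kappa_2 = t/r$ in lowest terms with $t,r\ge 1$ we have $\kappa_1 = t\lambda$, $\kappa_2 = r\lambda$, and then $tj+rk = 1/\lambda =: m$ is constant on $S(f)$ and is a positive integer (positive since $f$ is not constant). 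Hence $\kappa_1 = t/m$, $\kappa_2 = r/m$ with $\gcd(r,t)=\gcd(r,t,m)=1$, and $\kappa_2\ge\kappa_1$ becomes $r\ge t\ge 1$.

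Next I would read off the monomial structure. Let $\alpha$, resp.\ $\beta$, be the largest power of $X$, resp.\ $Y$, dividing $f$ --- equivalently the smallest first, resp.\ second, coordinate occurring in $S(f)$ --- and set $g := f/(X^\alpha Y^\beta)\in{\mathfrak o}[X,Y]$. Its support lies on $tj+rk = m'$ with $m' := m - t\alpha - r\beta$ and has smallest $j$-coordinate and smallest $k$-coordinate both equal to $0$; these force $r\mid m'$ and $t\mid m'$, so $m' = trn$ for an integer $n$, with $n\ge 1$ since $g$ is not a single monomial. As $\gcd(t,r)=1$, consecutive lattice points along $\ell$ differ by the primitive vector $(r,-t)$, so the first-quadrant lattice points on $tj+rk = trn$ are exactly $(ri,\,t(n-i))$ for $i=0,\ldots,n$; therefore $g(X,Y) = \sum_{i=0}^{n} c_i X^{ri} Y^{t(n-i)} = Q(X^r,Y^t)$ for a homogeneous polynomial $Q$ of degree $n$. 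The endpoints $(0,tn)$ and $(rn,0)$ must belong to $S(g)$ because they realise the two coordinate minima, so the $w_2^n$- and $w_1^n$-coefficients of $Q$ --- which are the constants $a$ and $b$ --- are nonzero. This gives the asserted form of $Q$ and of $f$, and along the way $m = t\alpha + r\beta + trn$.

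To reach \eqref{f} I would pass to roots: factor $Q(1,w) = a\prod_{j=1}^{M}(w-\zeta_j)^{n_j}$ over a splitting field $L\supseteq K$ of $Q(1,w)$, where $\sum_j n_j = n$ and each $\zeta_j\ne 0$ since $Q(1,0)=b\ne 0$; homogenising, $Q(w_1,w_2) = w_1^n\, Q(1,w_2/w_1) = a\prod_j (w_2-\zeta_j w_1)^{n_j}$, and the substitution $w_1 = x^r$, $w_2 = y^t$ yields $f(x,y) = a x^\alpha y^\beta\prod_{j=1}^{M}(y^t-\zeta_j x^r)^{n_j}$. The distance formula is then immediate from $\kappa_1+\kappa_2 = (t+r)/m$, namely $d(f) = (\kappa_1+\kappa_2)^{-1} = m/(t+r) = (t\alpha + r\beta + trn)/(t+r)$.

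I do not anticipate a genuine obstacle: the lemma is essentially bookkeeping about lattice points on one line, and a version of it is already recorded in \cite{IM}. The one point deserving care is the second step --- checking that $S(g)$ is really a full, evenly spaced segment of lattice points on $\ell$ with both of its extreme points present, since that is precisely what makes $Q$ a bona fide degree-$n$ homogeneous polynomial with $a,b\ne 0$ rather than one with gaps at the ends. This rests on combining the two minimality definitions of $\alpha$ and $\beta$ with the coprimality $\gcd(t,r)=1$.
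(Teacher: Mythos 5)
Your proof is correct. The paper does not actually prove this lemma (it records the facts as ``observed in \cite{IM}''), and your lattice-point argument --- support on the affine line $\kappa_1 j+\kappa_2 k=1$, divisibility of $m'=m-t\alpha-r\beta$ by both $r$ and $t$ from the two coordinate minima, and the spacing of lattice points by the primitive vector $(r,-t)$ --- is exactly the standard way to establish it, with the key point (that both endpoints of the segment lie in the support, so $a,b\neq 0$) correctly identified and handled.
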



We set $m_K(f) := \max(\alpha, \beta, \max(n_j : \zeta_j \in K ) )$
and $h(f) := \max(m_K(f), d(f))$.
If $\beta>0$, we introduce the index $j=0$ and set $\zeta_0 = 0$
and $n_0 = \beta$. We call the collection $\{\zeta_j \}_{j=0}^M$
the {\it roots} of $f$.

We have the following relationship between the multiplicities
$n_j, 0\le j\le M$ and the homogeneous distance $d(f)$;
the analogous result when $K = {\Bbb R}$ was
observed in \cite{IM}, the extension to general fields
$K$ being straightforward.

\begin{lemma}\label{IM-2} Let $f$ be a quasi-homogeneous
polynomial with dilation parameters $\kappa_1, \kappa_2 >0$
satisfying $\kappa_2\ge \kappa_1$. We use the notation introduced above (our underlying assumption  that $f$
is not a monomial remains in force).
\begin{enumerate}

\item If there is a multiplicity $n_{j_{*}} > d(f)$ for some
$0 \le j_{*} \le M$, then all the other multiplicities
must be {\bf strictly} less than $d(f)$; that is $n_j < d(f)$
for all $0 \le j \not= j_{*} \le M$. In particular,
there is at most one multiplicity $n_j, \ 0 \le j \le M$ with
$n_j > d(f)$.

\item If $\kappa_2/\kappa_1 \notin {\Bbb N}$ (that is, $t\ge 2$),
then $n = \sum_{j=1}^M n_j < d(f)$.


\item If $\kappa_2/\kappa_1 \in {\Bbb N}$, then $n_j \le d(f)$
for every $1\le j \le M$ with $\zeta_j \notin K$ (these are the
roots of $f$ with degree at least two with respect to $K$).

So necessarily, if there is a multiplicity $n_j > d(f)$ (unique
by (1)), it must correspond to a root $\zeta_j \in K$.

\item Finally, if there is a multiplicity $n_j$ corresponding to a
root $\zeta_j \notin K$ such that $n_j = d(f)$, then $f$ must
lie in an exceptional class $E_m$ for some $m\ge 1$.

Therefore outwith the special class of polynomials $f$ in
$E_m$, all multiplicities $n_j$ corresponding to a
root $\zeta_j \notin K$, necessarily satisfy $n_j < d(f)$.

\end{enumerate}
\end{lemma}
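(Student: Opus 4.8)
The plan is to reduce all four parts to the single identity $(t+r)\,d(f)=t\alpha+r\beta+trn$ of Lemma~\ref{IM}, together with two structural inputs. The first is purely arithmetic: since $\gcd(r,t)=1$ and $r\ge t\ge1$, the hypothesis $t\ge2$ forces $r\ge t+1\ge3$, hence $(t-1)(r-1)\ge2$ and $t+r-tr=1-(t-1)(r-1)\le-1$, whereas $t=1$ gives $t+r-tr=1$. The second is a standard consequence of Galois theory: if $\zeta_j\notin K$, then its minimal polynomial $p_j$ over $K$ has degree $\ge2$ and, since the characteristic is $0$ or $>\deg f$, is separable; hence every $K$-conjugate of $\zeta_j$ occurs in $Q(1,\cdot)$ with the same multiplicity $n_j$, so $p_j(w)^{n_j}\mid Q(1,w)$, and comparing degrees yields $n\ge2n_j$, with equality only when $\deg p_j=2$ and $Q(1,w)=a\,p_j(w)^{n_j}$.

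Granting these, parts (2) and (3) are immediate. For (2): when $t\ge2$, $(t+r)n-trn=(t+r-tr)n\le-n<0\le t\alpha+r\beta$, so $(t+r)n<(t+r)d(f)$, i.e.\ $n<d(f)$; I record that this inequality is strict, to be reused in (4). For (3): when $t=1$ and $\zeta_j\notin K$, the bound $n\ge2n_j$ gives $d(f)=\frac{\alpha+r\beta+rn}{1+r}\ge\frac{rn}{1+r}\ge\frac{2rn_j}{1+r}\ge n_j$, using $2r\ge1+r$; the closing remark of (3) then follows because the roots outside $K$ are among $\zeta_1,\dots,\zeta_M$ (while $\zeta_0=0\in K$), so a multiplicity exceeding $d(f)$ cannot be attached to one of them.

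For (1) I would split on $t$. If $t\ge2$, part (2) gives $n_j\le n<d(f)$ for all $1\le j\le M$, so among $n_0,\dots,n_M$ only $n_0=\beta$ can exceed $d(f)$; it is then unique, and all other multiplicities are $\le n<d(f)$. If $t=1$, the inequality $n_j\le d(f)$ rewrites (via $(1+r)d(f)=\alpha+r\beta+rn$, after cancelling $rn_j$) as $n_j\le\alpha+r\beta+r\sum_{i\ne j}n_i$ for $1\le j\le M$, and $\beta\le d(f)$ as $\beta\le\alpha+rn$. Assuming $n_{j_*}>d(f)$, I treat $j_*\ge1$ and $j_*=0$ separately. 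When $j_*\ge1$ the hypothesis already gives $n_{j_*}>\alpha+r\beta+r\sum_{i\ne j_*}n_i$, hence $n_{j_*}>r\beta$ and $n_{j_*}>rn_j$ for every other $j\ge1$; then for $j\ne j_*$, $j\ge1$, one has $\alpha+r\beta+r\sum_{i\ne j}n_i\ge rn_{j_*}\ge n_{j_*}>n_j$ (using $\sum_{i\ne j}n_i\ge n_{j_*}$) and $\beta<n_{j_*}\le n\le\alpha+rn$, so all remaining multiplicities are strictly below $d(f)$. When $j_*=0$, i.e.\ $\beta>\alpha+rn$, then for each $j\ge1$, $\alpha+r\beta+r\sum_{i\ne j}n_i\ge r\beta>r(\alpha+rn)\ge r^2n\ge n_j$, again strict. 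The same estimates handle the small cases (including $t=r=1$, where one reads $n\ge n_{j_*}+n_j$ in place of the crude $r^2n\ge n_j$); the only thing to watch throughout is keeping each inequality strict exactly where the statement demands.

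Finally, for (4): if $n_j=d(f)$ with $\zeta_j\notin K$, then $t\ge2$ is ruled out because (2) gives the strict inequality $n_j\le n<d(f)$; hence $t=1$, and equality in the chain proving (3) forces $\alpha=\beta=0$, $r=1$ and $n=2n_j$, whereupon the equality clause of the Galois input gives $Q(1,w)=a\,p_j(w)^{n_j}$ with $p_j$ an irreducible quadratic over $K$. Homogenising, $f(x,y)=x^{\alpha}y^{\beta}Q(x^r,y^t)=Q(x,y)=a\,\tilde p_j(x,y)^{n_j}$ with $\tilde p_j$ an irreducible binary quadratic form over $K$, i.e.\ $f\in E_{n_j}$, which is the assertion. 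I expect the only real obstacle to be the organisation of part (1)---there are a handful of sub-cases---and, throughout, tracking which inequalities must be strict; once the formula for $d(f)$ and the two facts above are in hand, each individual step is a short computation.
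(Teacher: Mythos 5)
Your argument is correct and rests on exactly the same two ingredients as the paper's proof: the identity $(t+r)\,d(f)=t\alpha+r\beta+trn$ together with the observation that $K$-conjugates of a root occur with equal multiplicity (so $n\ge 2n_j$ when $\zeta_j\notin K$), with the equality analysis in (4) pinning down $\alpha=\beta=0$, $t=r=1$, $n=2n_j$ and hence $f\in E_{n_j}$. The only difference is presentational: in part (1) you verify $n_j<d(f)$ case by case via the rewritten inequalities, whereas the paper derives a one-line contradiction from assuming two multiplicities at least $d(f)$ with one strict --- both are the same counting argument on the weighted sum $t\alpha+r\beta+trn$.
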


\begin{proof}
Suppose $n_{j_1} > n_{j_2} \ge d(f)$ for two
multiplicities $n_{j_1}, n_{j_2}$ with $0 \le j_1
\not= j_2 \le M$. Then
\begin{equation}\label{j_1}
d(f) \ \ge \ \frac{r(\beta + t n)}{r+t} \ > \ \frac{r
d(f) + r t d(f)}{r+1} \ = \ d(f) \frac{r(1+t)}{r+t} \ \ge \ d(f)
\end{equation}
which is a contradiction. This proves (1).

If $t\ge 2$, then $r > t\ge 2$ so that $\frac{1}{r} + \frac{1}{t} < 1$ and hence
$$
d(f) \ \ge \ \frac{r t n}{r + t} > n
$$
which proves (2).


To prove (3) we must show that $n_j \le d(f)$ for every $1\le j
\le M$ with $\zeta_j \notin K$. This simply follows from the fact
that the conjugates of any $\zeta_{j_0}$ over $K$ lie among the
roots $\{\zeta_j\}_{j=1}^M$ and any conjugate $\zeta_j$ of
$\zeta_{j_0}$ must have the same multiplicity; that is, $n_j =
n_{j_0}$. Finally if the degree of $\zeta_{j_0}$ is at least two,
then there exists a conjugate of $\zeta_{j_0}$ distinct from
itself and so if $n_{j_0} > d(f)$, one arrives at a
contradiction as in \eqref{j_1}.

Finally to prove (4), we first observe by (2) that necessarily
$\kappa_2/\kappa_1 \in {\Bbb N}$ and so $t=1$. If $n_j = d(f)$
is a multiplicity corresponding to a root $\zeta_j \notin K$, then
the degree of $\zeta_j$ over $K$ is at least two and so there is a
conjugate $\zeta_{j^{\prime}}$ of $\zeta_j$ with $j \not=
j^{\prime}$ and $n_j = n_{j^{\prime}}$. Therefore
$$
\frac{2r d(f)}{r+1} \ = \ \frac{r(n_j + n_{j^{\prime}})}{r+1} \
\le \ d(f)
$$
which implies that $r=1$. Now running the same argument again with
$r=1$, we obtain
$$
d(f) \ = \ \frac{(n_j + n_{j^{\prime}})}{2} \ \le \ \frac{\alpha
+ \beta + (n_j + n_{j^{\prime}} + n')}{2}  \ = \ d(f)
$$
where $n' = n - n_j - n_{j^{\prime}}$. This gives a contradiction
if there is a strict inequality above and so we conclude that
necessarily $\alpha = \beta = n' = 0$ which implies that $f$ is of
the form $E_m$ for some $m\ge 1$.

\end{proof}

\subsection{The height $h(f)$}
For real quasi-homogeneous polynomials $f \in {\Bbb R}[X,Y]$,
it was shown in \cite{IM-2} that $m_{\Bbb R}(f) := \max(\alpha,
\beta, \max(n_j : \zeta_j \in {\Bbb R})) = \sup_z d_z$
where $\sup_z d_z$ is the supremum of the Newton distances
of $f$ over all smooth local coordinate systems $z$ and
was introduced in the previous section
as the definition of the {\it height} for real-valued functions $f$.
Now when ${\mathfrak o} = {\Bbb Z}$, $K = {\Bbb Q}$ and
$f \in {\Bbb Z}[X,Y]$ is quasi-homogeneous, we see from
Lemma \ref{IM-2} that $\max(m_{\Bbb Q}(f), d(f)) =
\max(m_{\Bbb R}(f), d(f))$. In fact if $m_{\Bbb R}(f) > d(f)$,
then by part (3) of Lemma \ref{IM-2}, the unique multiplicity
$n_j > d(f)$ must correspond to a root $\zeta_j \in {\Bbb Q}$.
Hence our definition of
height as $\max(m_{\Bbb Q}(f),d(f))$ agrees with usual
definition $\sup_z d_z$ when $f \in {\Bbb Z}[X,Y]$
is quasi-homogeneous.

\subsection{The Varchenko exponents $i(f)$ and $\nu(f)$}\label{cases}

In the introduction we defined the Varchenko exponent
$\nu(f)$ as $1$ when $m_K(f) = d(f)$ and zero otherwise.
Furthermore the exponent $i(f) = \nu(f)$ whenever $h(f)\ge 2$
and we set $i(f) = 0$ when $h(f)<2$.

As we mentioned in the introduction,
Ikromov and M\"uller \cite{IM-2} showed
that the above definition of $\nu(f)$ in the setting
$K = {\Bbb R}$ is equivalent to the original definition
in terms of when the principal face of $f$ in adapted coordinates
is a vertex of the Newton diagram. Now when
${\mathfrak o} = {\Bbb Z}$ and $f \in {\Bbb Z}[X,Y]$
is quasi-homogeneous, we have already noted that the
implication $m_{\Bbb R}(f) \not= d(f) \Rightarrow
m_{\Bbb Q}(f) \not= d(f)$ follows from Lemma \ref{IM-2}
part (c). Furthermore Lemma \ref{IM-2} part (d)
shows that the implication $m_{\Bbb R}(f) = d(f)
\Rightarrow m_{\Bbb Q}(f) = d(f)$ holds
when $f$ does not belong to the exceptional class $E_m$.
Therefore outwith these exceptional classes $E_m$,
the dichotomy $m_{\Bbb R}(f) = d(f)$ or $m_{\Bbb R}(f) \not=
d(f)$ is the same as $m_{\Bbb Q}(f) = d(f)$ or
$m_{\Bbb Q}(f) \not= d(f)$ and so in the setting of
${\mathfrak o} = {\Bbb Z}$, the definition of the
Varchenko exponents agrees with the usual definition.










\subsection{The exceptional set ${\mathcal P}(f)$ of primes
 ideals in Theorem \ref{main-abstract}}\label{exceptional}

For a fixed nonzero prime ideal ${\mathfrak p}$
of  ${\mathfrak o}$, we will analyse
${\mathcal S}_{\chi}(f; {\mathfrak p}^s)$ and
${\mathcal N}(f;{\mathfrak p}^s)$ via the ${\mathfrak p}$-adic
valuation $|\cdot|_{\mathfrak p}$ on the field $K$, defined in the
introduction. In our analysis a finite collection
${\mathcal A}$
of algebraic elements over $K$ depending only on
$f$
arise naturally in our estimates.
At present we will not attempt to write the complete list
${\mathcal A}$ except to note that it includes
the roots $\{\zeta_j\}_{j\ge1}$ of $f$, differences of the roots
$\{\zeta_j - \zeta_k\}_{j\not= k}$ and the leading coefficient
$a$ in $Q$ defining $f$, introduced above. From time to time we will
add to it but in the end ${\mathcal A}$ will consist of finitely
many algebraic
elements over $K$, depending only on $f$.
The algebraic elements
in ${\mathcal A}$ all live in some algebraic closure
$K^{alg}$ of $K$ and for each valuation
$|\cdot|_{\mathfrak p}$ on $K$ there are many ways
to extend it to a valuation on $K^{alg}$. For each
prime ideal ${\mathfrak p}$ and element $\xi \in K^{alg}$,
we make the following canonical choice for $|\xi|_{\mathfrak p}$:
embed $K^{alg}$ into an algebraic
closure ${\bar K}_{\mathfrak p}^{alg}$
of ${\bar K}_{\mathfrak p}$ via an isomorphism over $K$.
There is a unique way of extending the valuation
$|\cdot|_{\mathfrak p}$ on $K$ to ${\bar K}_{\mathfrak p}^{alg}$
via ${\bar K}_{\mathfrak p}$ and for $\xi \in K^{alg}$,
we set $|\xi|_{\mathfrak p}$
to be the value of this extended valuation on the
image $\xi'$ of $\xi$ under this embedding. For notational convenience,
we will identify $K^{alg}$ with its embedded
image over $K$ in ${\bar K}_{\mathfrak p}^{alg}$ from now on.

More precisely it is the ${\mathfrak p}$-adic valuations
$|\cdot|_{\mathfrak p}$ of the elements in ${\mathcal A}$ which appear
in our estimates.
The important observation here is that there are only finitely
many prime ideals ${\mathcal P}(f)$ of ${\mathfrak o}$, depending
only on these algebraic elements over $K$ (and hence depends
only on $f$), so that
$|\xi|_{\mathfrak p} = 1$ for all ${\mathfrak p}\notin
{\mathcal P}(f)$ and every $\xi \in {\mathcal A}$.
To see this consider an algebraic element
$\xi \in {\mathcal A}$
and its minimal polynomial $x^d + a_{d-1} x^{d-1} + \cdots + a_0$
over $K$ so that
each $a_j \in K$. Let ${\mathcal F}(\xi)$ denote all the
prime ideals which arise in the prime ideal factorisation of one
of the (fractional) principal ideals $a_j {\mathfrak o}$. If
${\mathfrak p}\notin {\mathcal F}(\xi)$, then $|a_j|_{\mathfrak p}
= 1$ for every $0\le j \le d-1$ with  $a_j \not= 0$.
Fix a ${\mathfrak p} \notin
{\mathcal F}(\xi)$ and consider the conjugates $\xi_1, \xi_2, \ldots, \xi_r$ of
$\xi$ over $K$ lying in ${K}^{alg}$.
In the characteristic 0 case, $r=d$ and for each $s\ge 1$,
$$
a_{d-s} \ = \ \pm \sum\limits_{1\le j_1<\cdots<j_s\le d}
\xi_{j_1} \xi_{j_2} \cdots \xi_{j_s}.
$$
In the positive characteristic case, $d = r p^{\mu}$
for some $\mu\ge 0$ where $p$ is the characteristic of $K$.
Then for each $s\ge 1$,
\begin{equation}\label{symmetric}
a_{d-sp^{\mu}} \ = \ \pm \Bigl[\sum\limits_{1\le j_1<\cdots<j_s\le r}
\xi_{j_1} \xi_{j_2} \cdots \xi_{j_s} \Bigr]^{p^\mu}.
\end{equation}
We combine the two cases below and use \eqref{symmetric}
in both cases, taking $\mu = 0$ in \eqref{symmetric}
for the characteristic 0 case.

We claim that $|\xi|_{\mathfrak p} = 1$. In fact if
$|\xi_1|_{\mathfrak p} \ge |\xi_2|_{\mathfrak p} \ge
\cdots \ge |\xi_r|_{\mathfrak p}$, then we must have
equality $|\xi_1|_{\mathfrak p} = \cdots
= |\xi_r|_{\mathfrak p}$ and hence $|\xi|_{\mathfrak p} = 1$ since $a_0 = \pm [\xi_1 \cdots
\xi_r]^{p^{\mu}}$. Suppose equality
does not hold. Then $|\xi_1|_{\mathfrak p} = \cdots =
|\xi_s|_{\mathfrak p} > |\xi_{s+1}|_{\mathfrak p} \ge \cdots \ge |\xi_r|_{\mathfrak p}$ for some $1\le s <
r$ and hence
$1 = |a_{d-sp^{\mu}}| = |\xi_1 \cdots \xi_s|_{\mathfrak p}^{p^{\mu}}$
by the nonarchimedean nature of $|\cdot|_{\mathfrak p}$.
In fact
$|\xi_{j_1} \cdots \xi_{j_s}|_{\mathfrak p} < |\xi_1 \cdots \xi_s|_{\mathfrak p}$
for every term $\xi_{j_1} \cdots \xi_{j_s}$ in the
sum \eqref{symmetric} not equal to $\xi_1 \cdots \xi_s$.
This implies $1 = |\xi_1|_{\mathfrak p} = \cdots
= |\xi_s|_{\mathfrak p}$ which leads to the contradiction $1 = |a_0| = |\xi_1
\cdots \xi_r|_{\mathfrak p}^{p^{\mu}} < 1$.

We will also need to guarantee that $q_{\mathfrak p}$,
the number of elements of the residue class field
${\mathfrak o}/{\mathfrak p}$, is not too small; more precisely,
we will need that $q_{\mathfrak p} \ge C_{f}$ where $C_f$ is
a fixed positive constant, depending only on the degree
of our quasi-homogeneous polynomial $f$. The precise
value of $C_f$ will be determined later. In the setting
of Dedekind domains ${\mathfrak o}$ with the finiteness
property (FP) the collection
${\mathcal B}_C$
of prime ideals ${\mathfrak p}$
with absolute norm $q_{\mathfrak p} \le C$ is finite
in number; see for example, \cite{N}.

The exceptional set ${\mathcal P}(f)$ of prime ideals in the
statement of Theorem \ref{main-abstract} is the union of
${\mathcal F}(\xi)$ over all algebraic elements
$\xi\in {\mathcal A}$, together with the collection
${\mathcal B}_{C_f}$.

\subsection{Passing to the completion ${\bar{\mathfrak o}}_{\mathfrak p}$}\label{completion}
It will be convenient for us to pass to the completion
${\bar{\mathfrak o}}_{\mathfrak p}$. This will enable us to write
our character sum as an ``oscillatory integral'' over a local
field and to write the number of solutions to a polynomial
congruence as the measure of a sublevel set.
Since the residue class field ${\bar{\mathfrak o}}_{\mathfrak
p}/\pi_{\mathfrak p} {\bar{\mathfrak o}}_{\mathfrak p}$ is finite,
the ring ${\bar{\mathfrak o}}_{\mathfrak p}$ is then the compact
ring of integers of the local field ${\bar K}_{\mathfrak p}$, the
quotient field of ${\bar{\mathfrak o}}_{\mathfrak p}$. We then
have at our disposal a Haar measure $d\mu_{\mathfrak p}$ on ${\bar
K}_{\mathfrak p}$ which we normalise so that $\mu_{\mathfrak p}
({\bar{\mathfrak o}}_{\mathfrak p}) = 1$. The discrete valuation
$|\cdot|_{\mathfrak p}$, initially defined on ${\mathfrak o}$,
extends uniquely to a valuation on ${\bar K}_{\mathfrak p}$ which
we continue to denote by $|\cdot|_{\mathfrak p}$.

Since the prime ideal ${\mathfrak p}$ is fixed (although we keep
in mind estimates which are uniform in ${\mathfrak p}$), we will
suppress from now on the subscript ${\mathfrak p}$ in the various
quantities ${\bar{\mathfrak o}}_{\mathfrak p}, {\bar K}_{\mathfrak
p}, \pi_{\mathfrak p}, q_{\mathfrak p}, d\mu_{\mathfrak p},
|\cdot|_{\mathfrak p}$, etc... for notational convenience.

From the isomorphisms ${\mathfrak o}/{\mathfrak p}^s \to
{\bar{\mathfrak o}}/\pi^s {\bar{\mathfrak o}}$, we see that the
number of solutions to $f \equiv 0$ mod ${\mathfrak p}^s$ is the
same as the number of solutions to $f \equiv 0$ mod $\pi^s
{\bar{\mathfrak o}}$; that is
$$
{\mathcal N}(f; {\mathfrak p}^s) \ = \ q^{-2s} \# \{ f \equiv 0 \,
{\rm mod} \, \pi^s {\bar{\mathfrak o}} \} \ = \ {\mathcal N}(f;
\pi^s {\bar{\mathfrak o}}).
$$
If $d\mu_2 = d\mu \times d\mu$ denotes the product measure on
${\bar{\mathfrak o}} \times {\bar{\mathfrak o}}$, we have
\begin{equation}\label{sublevel-congruence}
{\mathcal N}(f; \pi^s{\bar{\mathfrak o}}) \ = \ \mu_2 \bigl(\{z \in {\bar{\mathfrak
o}}\times {\bar{\mathfrak o}} : \ |f(z)| \ \le \ q^{-s} \}\bigr).
\end{equation}
In fact the right hand side of \eqref{sublevel-congruence} is
equal to
$$
\int\!\!\!\!\int_{{\bar{\mathfrak o}}\times {\bar{\mathfrak o}}} {\bf
1}_{\{|f(w)|\le q^{-s}\}}(y) \, d\mu_2(y) \ = \ \sum_{z'\le \pi^s
{\bar{\mathfrak o}}} \int\!\!\!\!\int_{B_{q^{-s}}(z')} {\bf 1}_{\{|f(w)|\le
q^{-s}\}}(y) \, d\mu_2(y)
$$
$$
= \ q^{-2s} \# \{ z' \le \pi^s {\bar{\mathfrak o}} : \ |f(x',y')|
\ \le \ q^{-s} \} \ = \ {\mathcal N}(f; \pi^s{\bar{\mathfrak o}}).
$$
Here we are using the nonstandard notation $z' \le \pi^s
{\bar{\mathfrak o}}$ to denote elements $z' = (x',y')$ in
${\bar{\mathfrak o}} \times {\bar{\mathfrak o}}$ of the form $x' =
x_0 + x_1 \pi + \cdots + x_{s-1} \pi^{s-1}, y' = y_0 + y_1 \pi +
\cdots + y_{s-1} \pi^{s-1}$ where each $x_j$ and $y_j$ varies over
the $q$ representations in ${\mathfrak o}$ of the elements in the
residue class field. Also $B_r(z) = \{w\in {\bar K}\times {\bar K}
: \|w-z\| \le r\}$ denote balls in ${\bar K}\times {\bar K}$ where
$\|z\| := \max(|x|,|y|)$ if $z = (x,y)$. The second equality above
follows since $|f(x,y)| \le q^{-s}$ if and only if $|f(x',y')| \le
q^{-s}$ for elements $z = (x,y) \in B_{q^{-s}}(x',y')$.

A similar identity holds for character sums. We claim
we can find a non-principal additive character
$\psi$ on ${\bar K}$ with
$\psi \equiv 1$ on ${\bar{\mathfrak o}}$ so that
\begin{equation}\label{osc-S-chi}
{\mathcal S}_{\chi}(f; {\mathfrak p}^s) \ = \ \int\!\!\!\int_{{\bar{\mathfrak o}}\times {\bar{\mathfrak o}}} \, \psi(\pi^{-s} f(x,y)) \, d\mu(x)
d\mu(y).
\end{equation}
Furthermore $\psi$
will be non-trivial on $\{z \in {\bar K} : |z|\le q\}$ since
$\chi$ is primitive.

In fact, starting with our non-principal, primitive additive character $\chi$ on ${\mathfrak o}/{\mathfrak p}^s$, we pass to a character $\chi'$
on ${\bar{\mathfrak o}}/\pi^s {\bar{\mathfrak o}}$ via the isomorphism
${\mathfrak o}/{\mathfrak p}^s \simeq {\bar{\mathfrak o}}/\pi^s
{\bar{\mathfrak o}}$ so that
$$
{\mathcal S}_{\chi}(f;{\mathfrak p}^s) \ = \ q^{-2s} \, \sum\!\!\!\!\!\!\!\!\!\sum
\limits_{(x,y)\in [{\mathfrak o}/{\mathfrak p}^s]^2}
\chi(f(x,y)) \ = \ q^{-2s}
\sum\!\!\!\!\!\!\!\!\!\sum
\limits_{(x,y)\in [{\bar{\mathfrak o}}/\pi^s{\bar{\mathfrak o}}]^2}
\chi'(f(x,y)).
$$
Next, $\chi'$ restricts
to a non-principal character ${\bar \chi}$ on ${\bar {\mathfrak o}}$
via ${\bar{\chi}}(x) = \chi'(x+\pi^s{\bar{\mathfrak o}})$
which is equal to 1 on $\pi^{s} {\bar {\mathfrak o}}$. The characters of ${\bar{\mathfrak o}}$
arise as $x\to\psi_0(yx)$ for some $y = \sum_{j=-m}^{-1} x_j \pi^j$ and some fixed non-principal character
$\psi_0$ on ${\bar K}$
which is 1 on ${\bar{\mathfrak o}}$ and non-trivial on
$\{z\in {\bar K}: |z|\le q\}$. Hence ${\bar{\chi}}(x) = \psi_0(y' x)$ for some $y'$ satisfying
$|y'| = q^{s}$. In fact since $\psi_0$ is non-trivial on $B_q(0)$ we can
find an $x$ with $|y' x| =q$ so that ${\bar \chi}(x) \not=1$ and hence $|x| \ge q^{-s +1}$
implying $|y'|\le q^{s}$. On the other hand since
$\chi$ is a primitive character, we can find a $v$ with
$|v| = q^{-s +1}$ so that $\psi_0(y' v) = {\bar \chi}(v) \not= 1$.
This implies that $|y'| q^{-s+1} = |y'v| \ge q$ and so $|y'| \ge q^{s}$.

Therefore the character $\psi(z) := \psi_0 (y' \pi^{s} z)$
on ${\bar K}$ has the properties $\psi(\pi^{-s} x) = {\bar {\chi}}(x)$ on ${\bar{\mathfrak o}}$,
$\psi \equiv 1$ on ${\bar{\mathfrak o}}$ and $\psi$ is non-trivial on $B_q(0)$. Furthermore, using the nonstandard notation
$z' \le \pi^s {\bar{\mathfrak o}}$ introduced above,
$$
\int\!\!\!\!\int_{{\bar{\mathfrak o}}\times{\bar{\mathfrak o}}} \psi(\pi^{-s} f(z)) d\mu_2(z) =
\sum_{w' \le \pi^{s}{\bar{\mathfrak o}}}
\int\!\!\!\!\int_{B_{q^{-s}} (w')}
\psi(\pi^{-s} f(z)) d\mu_2(z) \ = \
$$
$$
q^{-2s}\sum_{w'\le \pi^{s}{\bar{\mathfrak o}}}
\psi(\pi^{-s} f(w'))
= q^{-2s} \sum_{x'\le \pi^{s}{\bar{\mathfrak o}}}
{\bar {\chi}}(f(x')) =
q^{-2s}
\sum\!\!\!\!\!\!\!\!\!\!\!\sum
\limits_{(x,y)\in [{\bar{\mathfrak o}}/\pi^s{\bar{\mathfrak o}}]^2}
\chi'(f(x,y))
$$
which establishes \eqref{osc-S-chi} since the last
sum is equal to ${\mathcal S}_{\chi}(f;{\mathfrak p}^s)$.

\subsection{Lower bounds on the
distance from roots of $f$
to ${\bar{\mathfrak o}}$}

In our analysis we will need to understand sets of the form
$B_{\rho}(\zeta) \cap {\bar{\mathfrak o}}$ where $\zeta$
is one of the nonzero roots of
$f$ appearing in \eqref{f},
$$
B_{\rho}(\zeta) \ = \ \{y\in {\bar K}^{alg} : |y-\zeta|\le \rho \}
$$
is a ball lying
in ${\bar K}^{alg}$ and $|\cdot|$
is the unique extension to ${\bar K}^{alg}$ of our original valuation
on ${\bar K}$. For ${\mathfrak p} \notin {\mathcal P}(f)$,
$\zeta$ has the property
that $|\zeta|=|\zeta|_{\mathfrak p} = 1$ as well as $|\zeta_1| = \cdots = |\zeta_r| = 1$
where $\zeta_1, \ldots, \zeta_r$ denote the conjugates of $\zeta$
over $K$. Since these conjugates are among the roots of $f$, they
lie
in ${\mathcal A}$ as well as their differences and so
$|\zeta_s - \zeta_t| = |\zeta_s - \zeta_t|_{\mathfrak p} = 1$ also holds for $1\le  s\not= t \le r$
and ${\mathfrak p}\notin {\mathcal P}(f)$.

It will be useful to have a good bound
from below on the quantity $\inf_{x\in{\bar{\mathfrak o}}} |x-\zeta|$
whenever $\zeta \notin {\bar K}$. This will be easily
achieved by Krasner's lemma
when $\zeta$ is separable over $K$.
In this case we will see that $\inf_{x\in {\bar{\mathfrak o}}} |x-\zeta| = 1$. In fact we will show that $|x-\zeta| = 1$
for every $x \in {\bar {\mathfrak o}}$.
When $\zeta$ is not separable over $K$ (and so $K$ must
have positive characteristic, say equal to $p$), then there
is a $\mu\ge 1$ such that $\zeta^{p^{\mu}}$ is separable over
$K$. We claim that $\zeta^{p^{\mu}} \notin {\bar K}$ and so
one can argue again by Krasner's lemma to deduce that
$\inf_{x\in{\bar{\mathfrak o}}} |x-\zeta| = 1$. In fact
the supposition $\zeta^{p^{\mu}} \in {\bar K}$ implies that
$\zeta \in {\bar K}$, contrary to our
assumption $\zeta \notin {\bar K}$. To see this, note that
in the positive characteristic case, ${\bar K} = {\Bbb F}_q((\pi))$
is the field of Laurent series with coefficients in the
finite field ${\Bbb F}_q$ with $q = p^f$ elements. Since
$\zeta$ is algebraic over $K$ it is algebraic over ${\bar K}$
and so lies in ${\Bbb F}_{q^d}((\pi))$ for some $d\ge 2$
since $\zeta \notin {\Bbb F}_q((\pi))$. Hence
$$
\zeta \ = \ \xi_0 + \xi_1 \pi + \xi_2 \pi^2 + \xi_3 \pi^3 + \cdots
$$
where each $\xi_j$ lies in ${\Bbb F}_{q^d}$  (recall that
$|\zeta| = |\zeta|_{\mathfrak p} = 1$) and so
$$
\zeta^{p^{\mu}} \ = \ [\xi_0]^{p^{\mu}} + [\xi_1]^{p^{\mu}}
\pi^{p^{\mu}} + [\xi_2]^{p^{\mu}} \pi^{2p^{\mu}} + \cdots \ \in
\ {\bar K} \ = \ {\Bbb F}_q((\pi)).
$$
Therefore $\xi_j^{p^{\mu}} \in {\Bbb F}_q$ for each $j\ge 0$.
The map $\phi(x) = x^{p^{\mu}}$ is automorphism for both
fields ${\Bbb F}_{q^d}$ and ${\Bbb F}_q$. As an automorphism
of ${\Bbb F}_q$, we can find an $\eta_j \in {\Bbb F}_q$
such that $\eta_j^{p^{\mu}} = \xi_j^{p^{\mu}}$ for each $j\ge 0$.
As an automorphism of ${\Bbb F}_{q^d}$, we deduce
$\xi_j = \eta_j \in {\Bbb F}_q$ for every $j\ge 0$, implying
that $\zeta \in {\Bbb F}_q((\pi)) = {\bar K}$, contradicting
our underlying assumption $\zeta \notin {\bar K}$.

We have the following lemma.

\begin{lemma}\label{krasner} In the setting above, suppose
$\zeta \notin {\bar K}$. Then for every
$x\in{\bar{\mathfrak o}}, \  |x-\zeta|  =  1$.
\end{lemma}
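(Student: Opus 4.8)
The plan is to reduce to the separable case and then apply Krasner's lemma, using crucially the fact that for $\mathfrak{p}\notin\mathcal{P}(f)$ the root $\zeta$ and all its conjugates have absolute value $1$ and pairwise differences of absolute value $1$. First I would dispose of the inseparable case as already set up in the discussion preceding the lemma: if $\zeta$ is not separable over $K$, choose $\mu\ge 1$ so that $\zeta^{p^\mu}$ is separable, note (as shown in the preamble) that $\zeta^{p^\mu}\notin\bar K$, and observe that $|x-\zeta|=|(x-\zeta)^{p^\mu}|^{1/p^\mu}$; since in characteristic $p$ one has $(x-\zeta)^{p^\mu}=x^{p^\mu}-\zeta^{p^\mu}$ and $x^{p^\mu}\in\bar{\mathfrak o}$ (the $p^\mu$-power map sends $\bar{\mathfrak o}$ into itself), it suffices to prove the claim for the separable element $\zeta^{p^\mu}$. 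So I may assume $\zeta$ is separable over $K$, hence over $\bar K$.

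Now let $\zeta=\zeta_1,\ldots,\zeta_r$ be the conjugates of $\zeta$ over $\bar K$ (equivalently, the $\bar K$-conjugates, a subset of the $K$-conjugates, which for $\mathfrak p\notin\mathcal P(f)$ all satisfy $|\zeta_i|=1$ and $|\zeta_i-\zeta_j|=1$ for $i\ne j$). Fix $x\in\bar{\mathfrak o}$. Since $|x|\le 1 = |\zeta|$, the nonarchimedean inequality gives $|x-\zeta|\le 1$. For the reverse inequality, apply Krasner's lemma in contrapositive form: if $|x-\zeta_1| < |\zeta_1-\zeta_j|$ for every $j=2,\ldots,r$, then $\bar K(\zeta_1)\subseteq\bar K(x)=\bar K$, forcing $\zeta\in\bar K$, contrary to hypothesis. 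Hence there is some $j\ge 2$ with $|x-\zeta_1|\ge|\zeta_1-\zeta_j|=1$, and combined with the upper bound this yields $|x-\zeta|=1$, as claimed.

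I expect the main (and essentially only) obstacle to be bookkeeping rather than anything deep: one must be careful that the conjugates entering Krasner's lemma are taken over $\bar K$ and not merely over $K$, and that membership of these conjugates — and their differences — in the set $\mathcal A$ (and hence the normalization $|\zeta_i|=|\zeta_i-\zeta_j|=1$ outside $\mathcal P(f)$) genuinely covers the $\bar K$-conjugates. Since the $\bar K$-conjugates of $\zeta$ form a subset of its $K$-conjugates, and the latter lie in $\mathcal A$ together with their pairwise differences, this is automatic; the only subtlety is that $r$ here denotes the number of $\bar K$-conjugates, which may be smaller than the degree of $\zeta$ over $K$. The inseparable reduction is the other place requiring a line of care, but it was already carried out in the text immediately before the lemma, so I would simply invoke it.
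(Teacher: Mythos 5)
Your proposal is correct and follows essentially the same route as the paper: the inseparable case is reduced to the separable one via the Frobenius power $\zeta^{p^{\mu}}$ exactly as in the text, and the separable case is handled by Krasner's lemma together with the normalisation $|\zeta_i-\zeta_j|=1$ for ${\mathfrak p}\notin{\mathcal P}(f)$. The only cosmetic difference is that you phrase Krasner's lemma in contrapositive form while the paper argues by contradiction; the content is identical.
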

\begin{proof}
We split the proof into two cases. First suppose that
$\zeta$ is separable over $K$. Then $\zeta$ is separable
over ${\bar K}$. Suppose that there is an $x \in {\bar{\mathfrak o}}$
such that $|x - \zeta| < 1$. Then $|\zeta-x| < |\zeta - \zeta'|$
for all conjugates $\zeta'$ of $\zeta$ over ${\bar K}$ different
from $\zeta$ since $|\zeta - \zeta'| = 1$. Hence by
Krasner' lemma (see for example \cite{L-2}), ${\bar K}[\zeta]
\subset {\bar K}[x] = {\bar K}$ implying $\zeta \in {\bar K}$
which contradicts our underlying assumption $\zeta \notin {\bar K}$.
Hence $|x - \zeta| \ge 1$ but clearly
$|x - \zeta| \le 1$ since
$|\zeta| = 1$.

Now let us consider the case when $\zeta$ is not separable
over $K$. As discussed above, there is a $\mu \ge 1$ so that
$\zeta^{p^{\mu}}$ is separable over $K$ yet does not belong
to ${\bar K}$. Hence we can apply Krasner's lemma to $\zeta^{p^{\mu}}$
to deduce $|y - \zeta^{p^{\mu}}| \ge 1$ for every $y \in
{\bar{\mathfrak o}}$. Therefore for every $x\in {\bar{\mathfrak o}}$,
$$
|x - \zeta|^{p^{\mu}} \ = \ |(x-\zeta)^{p^{\mu}}| \ = \
|x^{p^{\mu}} - \zeta^{p^{\mu}}| \ \ge \ 1
$$
and so we have $|x - \zeta| = 1$.

\end{proof}

\subsection{Notation and constants}

All constants $C,c,c'>0$ throughout this paper
will depend only on the degree of our quasi-homogeneous
polynomial $f(x,y)$,
although the values of these constants may change from line to
line. Often it will be convenient to suppress explicitly
mentioning the constants $C$ or $c$ in these inequalities and we
will use the notation $A \lesssim B$ between positive quantities
$A$ and $B$ to denote the inequality $A \le C B$ or $c A \le B$.
Finally we use the notation $A \sim B$ to denote that both
inequalities $A \lesssim B$ and $B \lesssim A$ hold.

\section{Polynomial congruences in one variable}

The proof of Theorem \ref{main-abstract} relies on a precise structural statement for
sublevel sets of polynomials $P\in {\bar{K}}[X]$ in one variable with
coefficients lying in our local field ${\bar{K}}$ which
carries the nontrivial valuation $|\cdot|$. Suppose our
polynomial $P(x) = a \prod (x - \xi_j)^{e_j}$ has
distinct roots $\xi_1, \ldots, \xi_m$ lying in
${\bar K}^{alg}$. As remarked earlier our valuation on
${\bar K}$ extends uniquely to a valuation on ${\bar K}^{alg}$
which we will continue to denote by $|\cdot|$. The
structural statement of a sublevel set $\{x\in S: \, |P(x)|\le
\delta \}$ where $S\subset {\bar{K}}$
will be given in terms of {\it balls} $B_r(\xi) := \{y\in
{\bar K}^{alg}: |y-\xi| \le r \}$ in ${\bar K}^{alg}$, centred at the roots $\{\xi_j\}$ of
$P$ with radii $r$ described by {\it root clusters} ${\mathcal C}$
of $\{\xi_1, \ldots, \xi_m\}$; a root cluster ${\mathcal C}$ being
defined simply as some subset ${\mathcal C}\subset \{\xi_1,
\ldots, \xi_m\}$ of the roots of $P$. We associate a size
$S({\mathcal C}) := \sum_{\xi_j \in {\mathcal C}} e_j$ to a
cluster by counting the number of roots in the cluster with
multiplicities. The following proposition is the non-archimedean
version of a basic sublevel set estimate due to Phong, Stein and
Sturm \cite{PSS} and its simple proof can be found in \cite{W-1}
or \cite{W-2}.

\begin{proposition}\label{PS}
With the notation as above, we have
\begin{equation}\label{cluster}
\bigl\{x\in S : \ |P(x)|\le \delta \bigr\} \ = \
\bigcup_{j=1}^m \, [B_{r_j}(\xi_j) \cap S ] .
\end{equation}
Here
$$
r_j \ = \ \min_{{\mathcal C}\ni \xi_j} r_{{\mathcal C},j}(\delta)
\ := \ \min_{{\mathcal C}\ni \xi_j} \ \Bigl[\frac{\delta}{|a
\prod_{\xi_k \notin {\mathcal C}} (\xi_j - \xi_k)^{e_k}
|}\Bigr]^{1/S({\mathcal C})}
$$
where the minimum is taken over all root clusters ${\mathcal C}$
containing $\xi_j$ and the product is taken over all $k$ such that
$\xi_k \notin {\mathcal C}$.
\end{proposition}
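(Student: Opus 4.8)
The plan is to prove the identity \eqref{cluster} by a direct two-way inclusion, working entirely with the ultrametric inequality on ${\bar K}^{alg}$ and with the radii $r_j$ as defined. First I would record the basic observation that for any point $x$ and any root $\xi_j$, the factorization $P(x) = a\prod_k (x-\xi_k)^{e_k}$ gives $|P(x)| = |a|\prod_k |x-\xi_k|^{e_k}$, so controlling $|P(x)|$ is equivalent to controlling the multiset of distances $\{|x-\xi_k|\}$. The key combinatorial device is the following: given $x\in S$, let $\xi_j$ be a root nearest to $x$ (so $|x-\xi_j| = \min_k |x-\xi_k|$), and let ${\mathcal C}$ be the set of all roots $\xi_k$ with $|x-\xi_k| = |x-\xi_j|$, the ``cluster of roots at distance $\le$ everything''. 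I would then use the ultrametric inequality in the sharp form: if $|x-\xi_k| > |x-\xi_j|$ then $|\xi_j - \xi_k| = |x-\xi_k|$, while if $\xi_k\in{\mathcal C}$ then $|\xi_j-\xi_k| \le |x-\xi_j|$.

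For the inclusion $\subseteq$: suppose $|P(x)|\le\delta$ with $x\in S$, and take $\xi_j$ nearest to $x$ and ${\mathcal C}$ as above. Splitting the product, $|P(x)| = |a|\,\bigl(\prod_{\xi_k\in{\mathcal C}} |x-\xi_k|^{e_k}\bigr)\bigl(\prod_{\xi_k\notin{\mathcal C}} |x-\xi_k|^{e_k}\bigr)$. For $\xi_k\in{\mathcal C}$ we have $|x-\xi_k| = |x-\xi_j|$, contributing $|x-\xi_j|^{S({\mathcal C})}$; for $\xi_k\notin{\mathcal C}$ we have $|x-\xi_k| = |\xi_j-\xi_k|$ exactly. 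Hence $\delta \ge |P(x)| = |x-\xi_j|^{S({\mathcal C})}\,\bigl|a\prod_{\xi_k\notin{\mathcal C}}(\xi_j-\xi_k)^{e_k}\bigr|$, which rearranges to $|x-\xi_j| \le r_{{\mathcal C},j}(\delta) $, and a fortiori $|x-\xi_j|\le r_j$ since $r_j$ is the minimum over clusters containing $\xi_j$ — wait, this needs care, because the minimum makes $r_j$ \emph{smaller}, not larger. The point is rather that for \emph{this particular} cluster ${\mathcal C}$ we get $|x-\xi_j|\le r_{{\mathcal C},j}(\delta)$, and I must instead argue that $x$ lands in $B_{r_k}(\xi_k)$ for \emph{some} root $\xi_k$, not necessarily $\xi_j$. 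So the correct move is: the cluster ${\mathcal C}$ I built is the minimizing cluster for \emph{every} $\xi_k\in{\mathcal C}$ in the sense that the product $\prod_{\xi_\ell\notin{\mathcal C}}(\xi_k-\xi_\ell)^{e_\ell}$ has the same valuation for all $\xi_k\in{\mathcal C}$ (another ultrametric computation), and for each such $\xi_k$, $r_{{\mathcal C},k}(\delta)\ge|x-\xi_j| = |x-\xi_k|$; moreover ${\mathcal C}$ \emph{is} the minimizing cluster, i.e. $r_k = r_{{\mathcal C},k}(\delta)$, because any smaller radius would force a ball that excludes some root it should contain — this monotonicity/minimality claim is the technical heart and I expect it to be the main obstacle.

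For the reverse inclusion $\supseteq$: fix a root $\xi_j$ and a point $x$ with $|x-\xi_j|\le r_j = r_{{\mathcal C},j}(\delta)$ for the minimizing cluster ${\mathcal C}$. I want $|P(x)|\le\delta$. Again split: $|P(x)| = |a|\prod_{\xi_k\in{\mathcal C}}|x-\xi_k|^{e_k}\cdot\prod_{\xi_k\notin{\mathcal C}}|x-\xi_k|^{e_k}$. For $\xi_k\notin{\mathcal C}$, I would like $|x-\xi_k| = |\xi_j-\xi_k|$, which holds provided $|x-\xi_j| < |\xi_j-\xi_k|$ for all such $k$; this inequality is exactly what minimality of ${\mathcal C}$ buys — if some $\xi_k\notin{\mathcal C}$ had $|\xi_j-\xi_k|\le|x-\xi_j|\le r_{{\mathcal C},j}$, then enlarging ${\mathcal C}$ to ${\mathcal C}\cup\{\xi_k\}$ would give a strictly smaller (or equal) radius, contradicting ${\mathcal C}$ being the minimizer, or else one checks the enlarged cluster still contains $\xi_j$ and does at least as well. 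For $\xi_k\in{\mathcal C}$, $|x-\xi_k|\le\max(|x-\xi_j|,|\xi_j-\xi_k|)$; one needs $|\xi_j-\xi_k|\le r_{{\mathcal C},j}$ too for roots in the cluster, which follows by a similar minimality argument (a root inside ${\mathcal C}$ that were too far would let us \emph{shrink} ${\mathcal C}$). Combining, $\prod_{\xi_k\in{\mathcal C}}|x-\xi_k|^{e_k}\le r_{{\mathcal C},j}^{S({\mathcal C})}$ and $\prod_{\xi_k\notin{\mathcal C}}|x-\xi_k|^{e_k} = \prod_{\xi_k\notin{\mathcal C}}|\xi_j-\xi_k|^{e_k}$, so $|P(x)|\le |a|\,r_{{\mathcal C},j}^{S({\mathcal C})}\prod_{\xi_k\notin{\mathcal C}}|\xi_j-\xi_k|^{e_k} = \delta$ by the very definition of $r_{{\mathcal C},j}(\delta)$. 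Since the whole argument is just a bookkeeping of the ultrametric inequality, I would present the minimality lemma (that the cluster realizing $r_j$ is ``downward closed'' in distance from $\xi_j$) once at the start and then invoke it in both inclusions; alternatively, as the excerpt notes, one may simply cite \cite{W-1} or \cite{W-2} for the full details.
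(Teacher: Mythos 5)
First, note that the paper does not prove Proposition \ref{PS} at all; it simply cites \cite{W-1} and \cite{W-2} for its ``simple proof'', so you are supplying an argument the paper omits. Your $\supseteq$ half is essentially sound, but it is more convoluted than necessary: given $|x-\xi_j|\le r_j$, take the cluster ${\mathcal C}=\{\xi_k : |x-\xi_k|\le |x-\xi_j|\}$ determined by $x$, use the ultrametric identity $|x-\xi_k|=|\xi_j-\xi_k|$ for $\xi_k\notin{\mathcal C}$, and then invoke $r_j\le r_{{\mathcal C},j}(\delta)$ for \emph{this} cluster. Since the minimum defining $r_j$ hands you that inequality for whichever cluster $x$ dictates, you never need to identify the actual minimizer, and the ``minimality lemma'' you propose (roots outside the minimizing cluster are far from $\xi_j$, roots inside are close) can be dispensed with entirely; as stated it also has equality cases ($|\xi_j-\xi_k|=r_j$) that your enlarge/shrink argument does not strictly contradict.

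The $\subseteq$ half, however, contains a genuine gap, and the claim you lean on to close it is false. You correctly observe that producing one cluster ${\mathcal C}$ (the set of roots nearest to $x$) with $|x-\xi_j|\le r_{{\mathcal C},j}(\delta)$ does not suffice, because $r_j$ is a minimum; you then assert that this nearest-root cluster \emph{is} the minimizing cluster. It need not be: for $P(x)=x(x-\pi)$ and $\delta=q^{-1}$ the minimizing cluster for the root $0$ is $\{0,\pi\}$ (giving $r_1=\delta^{1/2}$, versus $\delta/|\pi|=1$ for the singleton), while for $x=0$ the set of nearest roots is just $\{0\}$. The correct repair is simpler and makes no mention of the minimizer: with $\xi_j$ a nearest root to $x$, show $|x-\xi_j|\le r_{{\mathcal C}',j}(\delta)$ for \emph{every} cluster ${\mathcal C}'\ni\xi_j$. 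Indeed, for $\xi_k\in{\mathcal C}'$ one has $|x-\xi_j|\le|x-\xi_k|$ because $\xi_j$ is nearest, and for $\xi_k\notin{\mathcal C}'$ the ultrametric inequality gives $|\xi_j-\xi_k|\le\max(|x-\xi_j|,|x-\xi_k|)=|x-\xi_k|$; multiplying these bounds factor by factor yields
$$
|a|\,|x-\xi_j|^{S({\mathcal C}')}\prod_{\xi_k\notin{\mathcal C}'}|\xi_j-\xi_k|^{e_k}\ \le\ |a|\prod_k|x-\xi_k|^{e_k}\ =\ |P(x)|\ \le\ \delta,
$$
which is exactly $|x-\xi_j|\le r_{{\mathcal C}',j}(\delta)$. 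Taking the minimum over ${\mathcal C}'$ gives $x\in B_{r_j}(\xi_j)$, and with this replacement your two-way inclusion is complete.
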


In our application of Proposition \ref{PS} the roots $\xi_j$
and the coefficient $a\in {\bar K}$ have the property that
$|a| = |\xi_j| = 1$. Furthermore $|\xi_j - \xi_k| = 1$ for all $j\not= k$. In this case
the minimum over all root clusters ${\mathcal C}$ containing a root
$\xi_j$ in the definition of $r_j$ is attained when ${\mathcal C} = \{\xi_j\}$
is the singleton root cluster (here we are assuming $\delta \le 1$). Therefore
\begin{equation}\label{rj}
r_j \ = \ r_j(\delta) \ = \ \delta^{1/e_j}.
\end{equation}

\section{The main estimates in Theorem \ref{main-abstract}}\label{homogeneous}

We will give a direct proof which treats simultaneously
our character sums
$$
S_{\chi}(f; {\mathfrak p}^s) \ = \ \int\!\!\!\int_{{\bar{\mathfrak o}}\times
{\bar{\mathfrak o}}} \psi(\pi^{-s} f(x,y)) \, d\mu(x) d\mu(y)
$$
and the number of solutions to our polynomial congruences
$$
{\mathcal N}(f; {\mathfrak p}^s) = \mu_2 \bigl(\{z \in {\bar{\mathfrak
o}}\times {\bar{\mathfrak o}} : \ |f(z)| \ \le \ q^{-s} \}\bigr) =
\int\!\!\!\int_{{\bar{\mathfrak o}}\times
{\bar{\mathfrak o}}} {\bf 1}_{\bar{\mathfrak o}}(\pi^{-s} f(x,y)) \, d\mu(x) d\mu(y);
$$
that is one proof works to establish the estimates
\eqref{main-sum-est-abstract}, \eqref{main-poly-est-abstract},
\eqref{abstract-poly-infinite}
and \eqref{abstract-sum-infinite} in
Theorem \ref{main-abstract}.
We make the ongoing assumption that $f$ consists of
more than one monomial; for the simple case when
$f$ is a monomial, see Section \ref{monomial}.
When treating ${\mathcal S}_\chi(f;{\mathfrak p}^s)$, we
assume $2 \le d(f)$ (for the case $d(f)<2$,
see Section \ref{two-cases}).
Recall that the Varchenko exponents
$i(f) = \nu(f)$ agree when $h(f)\ge 2$ and in this case,
we will denote the common
value as $\nu(f)$.

In the case when the ratio $\kappa_2/\kappa_1$ of the dilation
parameters is an integer (this includes the case when
$f$ is a a homogeneous polynomial),
the argument below works if
we replace the
additive character $\psi$ or the indicator function ${\bf 1}_{\bar{\mathfrak o}}$
by any general complex-valued
function ${\mathcal C} : {\bar K} \to {\Bbb C}$ with the following
properties:

$(C1) \ \ \ \ {\mathcal C} \ \equiv \ 1$ \ on ${\bar{\mathfrak
o}}$;

$(C2)$ \ \ ${\displaystyle \int_{|x|=1} {\mathcal C}(\pi^{-j}
g(x)) \ d\mu(x) \ = \ 0 \ \ {\rm for \ all} \ \ j\ge 2} \
\ {\rm and} \ \ g(x) = b_k x^k + \cdots + b_1 x + b_0$
\ \ \ \ \ \ \ \ in ${\mathfrak o}[X]$
with the property $\pi | \ b_r$ for $r\le k-1$ and $|k b_k| = 1$; and

$(C3)$ \ \ ${\displaystyle \bigl| \int_{|x|=1} {\mathcal
C}(\pi^{-1} b x^k) \ d\mu(x)\bigr| \ \le \ A_k q^{-1/2} \ \ {\rm
for \ all} \ k\ge 1 \ \ {\rm and} \ \ |b|=1}$.

When ${\mathcal C} = {\bf 1}_{\bar{\mathfrak o}}$,
properties (C2) and (C3) are trivial to verify since $\pi^{-j} g(x)
\notin {\bar{\mathfrak o}}$ for all $j\ge 1$ and $|g(x)|=1$
 whenever $|x|=1$; in particular the integral in (C3) in fact vanishes in this case.

If we have this extra cancellation we can replace (C3)
with the stronger property

$(C3)'$ \ \   ${\displaystyle \int_{|x|=1} {\mathcal C}(\pi^{-1} b
x^k) \ d\mu(x) \ = \ 0 \ \ {\rm for \ all} \ \ k\ge 1 \
\ {\rm and} \ \ |b|=1}$.

For the general case, when $\kappa_2/\kappa_1$ is not
necessarily an integer, we will need a slight strengthening
of property (C2); namely the vanishing
$\int_{|x|=1} {\mathcal C}(\pi^{-j}
g(x)) d\mu(x) = 0$ still holds if the region of
integration $\{|x|=1\}$ is replaced by any finite
union of disjoint balls $B_{q^{-1}}$ in ${\bar{\mathfrak o}}$
with radius $q^{-1}$. Furthermore there will be one instance
where we will need to appeal to a stronger form of property
(C3); namely when the monomial $x^k$ is replaced by a
general monic $x^k + c_{k-1}x^{k-1} + \cdots + c_1 x$
polynomial. Such square root $q^{-1/2}$ estimates for
character sums follow
from the work of A. Weil but there is no need to appeal
to such deep results in the monomial case $x^k$.

The verification of (C2) and (C3) for the additive character
${\mathcal C} = \psi$ is straightforward. In fact to see (C2), we
write our oscillatory integral
$$
\int_{|x|=1} {\psi}(\pi^{-j} g(x)) \ d\mu(x) \ = \ q^{-j} \sum_{
\begin{array}{c}\scriptstyle
x\le \pi^j {\bar{\mathfrak o}}\\
         \vspace{-5pt}\scriptstyle \pi \not| \ x
         \end{array} }
\chi^{\prime}(\pi^{-j} g(x))
$$
back in terms of a character sum over ${\bar{\mathfrak o}}/\pi^j {\bar{\mathfrak o}}$
(using the nonstandard notation $x \le \pi^j {\bar{\mathfrak o}}$ introduced in Section
\ref{prelim}) and decompose the sum via $x = z + \pi^{j-1} y$ so that
$$
\int_{|x|=1} {\psi}(\pi^{-j} g(x)) \ d\mu(x) \ = \ q^{-j} \sum_{
\begin{array}{c}\scriptstyle
z\le \pi^{j-1} {\bar{\mathfrak o}}\\
         \vspace{-5pt}\scriptstyle \pi \not| \ z
         \end{array} }
\chi^{\prime}(\pi^{-j} g(z)) \sum_{y\le \pi {\bar{\mathfrak o}}}
\chi^{\prime}(\pi^{-1} g'(z) y);
$$
in fact for $x = z + \pi^{j-1} y$, we have $g(x) \equiv g(z) + g'(z) \pi^{j-1} y$ mod $\pi^j {\mathfrak o}$
since $j\ge 2$. Furthermore $g'(z) \equiv k b_k z^{k-1}$
mod $\pi {\bar{\mathfrak o}}$ and so for each $z$ arising in the first sum
above, the inner sum can written as
$$
\sum_{y\le \pi {\bar{\mathfrak o}}}
\chi^{\prime}(\pi^{-1} k b_k z^{k-1} y)
$$
and this sum vanishes
by the basic orthogonality
property of the nonprincipal character $\chi^{\prime}$ since
$\pi \not| \ k b_k z^{k-1}$ when $|k b_k | = 1$.
Note that the proof works if the region of integration
$\{|x|=1\}$ in the original integral is replaced by
any union of disjoint balls $B_{q^{-1}}$ with radius $q^{-1}$.

As mentioned above, property (C3) for character sums
follows from the work of A. Weil but in this case there
is a simple well-known proof which we reproduce for
the convenience of the reader. Writing again
the oscillatory integral
$$
\int_{|x|=1} {\psi}(\pi^{-1} b x^k) \ d\mu(x) \ = \ q^{-1} \sum_{
x \le \pi: \ x\not= 0}
\chi(\pi^{-1} b x^k)
$$
as a character sum over the
finite field ${{\mathfrak o}}/\pi {{\mathfrak o}}$, we rewrite
the right-hand side as
$$
q^{-1} \sum_{z \le \pi: \ z\not= 0}
\chi(\pi^{-1} b z) \sum_{x: \ x^k = z} 1.
$$
Let $g$ generate the multiplicative group
$[{\mathfrak o}/\pi {\mathfrak o}]\setminus \{0\}$ and write
$\log(z)$ as the integer $\ell$, $0\le \ell \le q-2$ such that
$z = g^{\ell}$. For a given $z$, the equation $x^k = z$
is solvable if and only if $gcd(k,q-1) \, | \ \log(z)$ and when
this happens there are precisely $d:= gcd(k,q-1)$ solutions.
Therefore we can
write the inner sum above as $\sum_{m=0}^{d-1} \exp(2\pi i m \log(z)/ d)$
and hence
$$
q^{-1} \sum_{x \le \pi: \ x\not= 0}
\chi(\pi^{-1} b x^k) \ = \ q^{-1} \sum_{m=0}^{d-1} \ \ \
\sum_{z\le \pi: \ z\not= 0} \exp(2\pi i m \log(z)/ d) \ \chi(\pi^{-1} b z).
$$
For $m=0$, the character sum in $z$ gives $-1$ and for $m\ge 1$,
the quantity
$$
U_m \ = \ \sum_{z\le \pi: \ z\not= 0} \exp(2\pi i m \log(z)/ d) \ \chi(\pi^{-1} b z)
$$
has modulus equal to $\sqrt{q}$. In fact
$$
|U_m|^2 \ = \ \sum_{x\not= 0} \sum_{y\not= 0}\exp(2\pi i m \log(x)/ d)
\chi(\pi^{-1} b y(x-1))
$$
and the double sum is equal to
$$
q - 1 + \sum_{x\not= 0,1} \exp(2\pi i m \log(x)/d)
\sum_{y\not=0} \chi(\pi^{-1} b (x-1) y) =
q - 1 - \sum_{x\not= 0,1} \exp(2\pi i m \log(x)/d) = q.
$$
This shows that property (C3)
$$
\Bigl|\int_{|x|=1} {\psi}(\pi^{-1} b x^k) \ d\mu(x) \Bigr| \ \le \
\sqrt{gcd(k,q-1)} \ q^{-1/2}
$$
holds when ${\mathcal C} = \psi$.

\subsection{The basic decomposition} Recall that
$$
f(x,y) \ = \ a x^{\alpha} y^{\beta} \prod_{j=1}^M (y^t - \zeta_j x^r)^{n_j}
$$
and $d(f) = (\alpha t + \beta r + n r)/(r+t)$ where $n = \sum_{j=1}^M n_j$.
Following Denef and Sperber \cite{DS} we decompose the integral
$$
\int\!\!\!\int_{{\bar{\mathfrak o}}\times
{\bar{\mathfrak o}}} {\mathcal C}(\pi^{-s} f(x,y)) \, d\mu(x) d\mu(y) \ = \
\sum_{k_1,k_2 \ge 0} \int\!\!\!\int_{|x|=q^{-k_1}, |y|=q^{-k_2}}
{\mathcal C}(\pi^{-s} f(x,y)) \, d\mu(x) d\mu(y)
$$
into three parts $I + II + III$ where
$$
I \ = \ \sum_{
\begin{array}{c}\scriptstyle
k_1, k_2 \ge 0\\
         \vspace{-5pt}\scriptstyle t k_2 = r k_1
         \end{array} }
\int\!\!\!\int_{|x|= q^{-k_1}, |y|=q^{-k_2}}
{\mathcal C}(\pi^{-s} f(x,y)) \, d\mu(x) d\mu(y)
$$
$$
\ = \ \sum_{m\ge 0} q^{-(t+r)m} \int\!\!\!\int_{|x|=1,|y|=1} {\mathcal C}(\pi^{-s+mN} f(x,y)) \,
d\mu(x) d\mu(y)
$$
and $N := \alpha t + \beta r + n t r$. Here we used the
fact that $t$ and $r$ are relatively prime so that
if $ t k_2 = r k_1$, then $t | \, k_1$ and so we
are writing $k_1 = m t$ in the above sum.
Furthermore $II$ and $III$ are
defined as
$$
II \ = \   \sum_{
\begin{array}{c}\scriptstyle
k_1, k_2 \ge 0\\
         \vspace{-5pt}\scriptstyle t k_2 < r k_1
         \end{array} }
\int\!\!\!\int_{|x|= q^{-k_1}, |y|=q^{-k_2}}
{\mathcal C}(\pi^{-s} f(x,y)) \, d\mu(x) d\mu(y),
$$
$$
III \ = \   \sum_{
\begin{array}{c}\scriptstyle
k_1, k_2 \ge 0\\
         \vspace{-5pt}\scriptstyle t k_2 > r k_1
         \end{array} }
\int\!\!\!\int_{|x|= q^{-k_1}, |y|=q^{-k_2}}
{\mathcal C}(\pi^{-s} f(x,y)) \, d\mu(x) d\mu(y).
$$

\subsection{Estimates for $I$}\label{I}
We turn our attention first to I which is the main term. We
split $I = I_1 + I_2$ into two parts where
$$
I_1 \ = \
\sum_{
m N \ge s }
q^{-(t+r)m} \int\!\!\!\!\!\int\limits_{|x|=1 \, |y|=1} {\mathcal C}(\pi^{-s+mN} f(x,y)) \,
d\mu(x) d\mu(y)
$$
and
$$
I_2 \ = \ \sum_{
          m N \le s - 1
          }
q^{-(t+r)m} \int\!\!\!\!\!\int\limits_{|x|=1 \, |y|=1} {\mathcal
C}(\pi^{-s + mN} f(x,y)) \, d\mu(y) d\mu(x).
$$
From property (C1) we see that
\begin{equation}\label{I1-above}
 I_1 \ = \ (1-q^{-1})^2 \sum_{
          m N \ge s }
q^{-(t+r)m} \ \le \ (1-q^{-1}) q^{-s/d(f)}
\end{equation}
and when $s \equiv 0$ mod $N$,
\begin{equation}\label{I1-infinite}
(1-q^{-1})^2 q^{-s/d(f)} \ \le \ I_1.
\end{equation}

For $I_2$ we would like to make a change of variables
$u = \phi(x)$ in $x$ so that $u^t = x$. This will
require some care if $t\ge 2$ but if we can do this, then the
idea is to make the
change of variables $y = u^r z$ in the $y$ integral
which will successfully separate the variables
by the quasi-homogeneity of $f$.
Of course this can be done easily if $t=1$ (that is,
when the ratio $\kappa_2/\kappa_1$ of the dilation
parameters is an integer) which
includes the homogeneous case. Some care needs to be
taken then $\kappa_2/\kappa_1 \notin {\Bbb N}$ or $t\ge 2$.

\subsubsection{Estimates for $I_2$: the case when $\kappa_2/\kappa_1
\in {\Bbb N}$}
To get an idea of where we are heading, let us consider
the treatment of $I_2$ when
$t=1$ (or equivalently, when $\kappa_2/\kappa_1 \in {\Bbb N}$); here
$\phi(x) = x$ above so there is no initial change
of variables in $x$. We will then discuss the modifications needed
to treat the general case.
When $t=1$, we proceed directly
to the second change of variables $y = x^r z$ in the inner
$y$ integral and write $I_2 = $
\begin{equation}\label{I2-homogeneous}
\sum_{
          m N \le s - 1
          }
q^{-(1+r)m} \int_{|x|=1} \left[\int_{|z|=1} {\mathcal C}(\pi^{-s +
mN} x^N h(z)) \, d\mu(z)\right] d\mu(x)
\end{equation}
where $h(z) = a z^{\beta} \prod_{j\ge 1} (z-\zeta_j)^{n_j}$.
We now interchange the order of integration and decompose the $z$
integral depending on the size of $h(z)$;
$$
I_2 = \sum_{
          m N \le s - 1
          }
q^{-(1+r)m} \sum_{\ell\ge 0} \int\limits_{\bigl\{|z|=1:
|h(z)|=q^{-\ell}\bigr\}}d\mu(z)\int\limits_{|x|=1} {\mathcal
C}(\pi^{-s + mN + \ell} [\pi^{-\ell} h(z)] x^N) d\mu(x).
$$
By property (C2) the $\ell$ sum vanishes for $\ell \le s - mN - 2$
and by property (C1) the inner $x$ integral equals to $(1-q^{-1})$
for $\ell \ge s - mN$. Hence we split $I_2 = I_{2,1} + I_{2,2}$
where
\begin{equation}\label{I21-sublevel}
I_{2,1} \ = \ (1-q^{-1}) \, \sum_{
          m N \le s - 1
          }
q^{-(1+r)m} \sum_{\ell\ge s-mN} \mu\bigl(\bigl\{|z|=1:
|h(z)|=q^{-\ell}\bigr\}\bigr)
\end{equation}
and
$$
I_{2,2} = \sum_{
          m N \le s - 1
          }
q^{-(1+r)m} \int_{\bigl\{|z|=1: |h(z)|=q^{-\ell_{m,s}}\bigr\}}
F_{m,s}(z) d\mu(z)
$$
where $\ell_{m,s} = s - mN - 1$ and $F_{m,s}(z) = \int_{|x|=1}
{\mathcal C}(\pi^{-1} [\pi^{-\ell_{m,s}} h(z)] x^N) d\mu(x)$.
If ${\mathcal C}$ satisfies the extra cancellation
condition (C3)' (which is the case for the problem of
polynomial congruences), then $I_{2,2} = 0$.
Property (C3) implies $|F_{m,s}(z)| \le C q^{-1/2}$ for $z$
satisfying $|h(z)| = q^{-\ell_{m,s}}$ and so
\begin{equation}\label{I22-estimate}
|I_{2,2}| \ \le \ C q^{-1/2} \, \sum_{
          m N \le s - 1}
q^{-(1+r)m} \mu\bigl(\bigl\{|z|=1:
|h(z)|=q^{-\ell_{m,s}}\bigr\}\bigr).
\end{equation}
From \eqref{I21-sublevel} and \eqref{I22-estimate} we see that we
need to understand the sets $\{|z|=1 : |h(z)| = q^{-\ell} \}$ and
here is where Proposition \ref{PS} comes into play.

Set ${\mathcal I}_{\mathfrak p} = \{j\ge 1 : \, \zeta_j \in {\bar
K} \}$. Recall that ${\bar K} = {\bar K}_{\mathfrak p}$ is the
completion of $K$ with respect to the valuation
$|\cdot|_{\mathfrak p}$. The set ${\mathcal
I}_{\mathfrak p}$ may be empty. From Proposition \ref{PS} and the subsequent remarks,
we see
that for any $\ell\ge 1$,
\begin{equation}\label{PS-used}
\bigl\{|z|=1 : |h(z)| = q^{-\ell} \bigr\} \ = \ \bigcup_{j\in
{\mathcal I}_{{\mathfrak p},\ell}} \bigl\{ |z|=1 : |z - \zeta_j|
= q^{-\ell/n_j} \bigr\}
\end{equation}
where ${\mathcal I}_{{\mathfrak p},\ell} = \{j\in {\mathcal I_{\mathfrak p}} : n_j \, | \ \ell \}$.
Since the union above is disjoint
(recall that $|\zeta_j - \zeta_k| = 1$ for all $j\not= k$ when
${\mathfrak p} \notin {\mathcal P}(f)$), we have
$$
I_{2,1} = (1-q^{-1})^2  \sum_{j\in {\mathcal I}_{\mathfrak p}}
\sum_{
          m N \le s - 1 }
q^{-(1+r)m} \sum_{n\ge (s-mN)/n_j} q^{-n} := (1-q^{-1})^2
\sum_{j\in {\mathcal I}_{\mathfrak p}} I_{2,1}^j.
$$
If ${\mathcal I}_{\mathfrak p} = \emptyset$, then $I_{2,1} = 0$
but if ${\mathcal I}_{\mathfrak p} \not= \emptyset$, the following
estimates hold. Since
$$
q^{-(1+r)m} \sum_{n \ge (s-mN)/n_j} q^{-n} \ \le \ C \,
q^{-s/n_j} q^{m (1+r)(d(f) - n_j)/n_j}
$$
for each $j\in {\mathcal I}_{\mathfrak p}$ and some absolute
constant $C$, we have
\begin{equation}\label{I21j-estimate}
I_{2,1}^j \ \le \ C \, q^{-s/n_j} \ \ \ \ I_{2,1}^j \ \le \ C \, s
q^{-s/d(f)} \ \ \ {\rm or} \ \ I_{2,1}^j \ \le \ C \,
q^{-s/d(f)}
\end{equation}
whenever $d(f) < n_j , \ d(f) = n_j $ or $n_j < d(f)$,
respectively. Furthermore
\begin{equation}\label{I21j-below}
q^{-s/n_j} q^{-1} \ \le \ I_{2,1}^j \ \ \ \ {\rm or} \ \ \ \ c \,
s q^{-s/d(f)} q^{-1} \ \le \ I_{2,1}^j
\end{equation}
for all $s\ge 1$ whenever $d(f) < n_j$ or $d(f) = n_j$,
respectively; however when $s\equiv 0$ mod $n_j$,
\eqref{I21j-below} improves to
\begin{equation}\label{I21j-infinite}
q^{-s/n_j} \ \le \ I_{2,1}^j \ \ \ \ {\rm or} \ \ \ \ c \, s
q^{-s/d(f)} \ \le \ I_{2,1}^j
\end{equation}
whenever $d(f) < n_j$ or $d(f) = n_j$, respectively.

We turn now to bounding $I_{2,2}$ and here
$\ell_{m,s} \ge 1$ unless $mN = s-1$ (which can happen only if $N|
\ s-1$) in which case $\ell_{m,s} = 0$. The complimentary case
$\ell = 0$ to \eqref{PS-used} is
$$
\bigl\{|z|=1 : |h(z)| = 1 \bigr\} \ = \ \bigl\{|z|=1 :
|z - \zeta_j| = 1 \ \ {\rm for \ all} \ j \bigr\}.
$$
In fact, Lemma \ref{krasner} implies that
$|z-\zeta_j| = 1$ automatically holds for all $z\in {\bar{\mathfrak o}}$ whenever
$\zeta_j \notin {\bar K}$. The right hand side is $\{z : |z|=1 \}$
when ${\mathcal I}_{\mathfrak p} = \emptyset$.
Therefore
$$
I_{2,2} \ = \ \sum_{mN\le s-2}  \sum_{j\in
{\mathcal I}_{\mathfrak p}^{m,s}} I_{2,2}^{m,j} \ + \
I_{2,2}^{*} \ \
:= \ \ \sum_{j\in {\mathcal I}_{\mathfrak p}} I_{2,2}^j
\ + \ I_{2,2}^{*}
$$
where ${\mathcal I}_{\mathfrak p}^{m,s} := \{ j\in
{\mathcal I}_{\mathfrak p} : n_j \, | \ \ell_{m,s}\}$,
$$
I_{2,2}^{m,j} =  q^{-(1+r)m}
\int_{\bigl\{|z|=1 \ : \ |z-\zeta_j| = q^{-\ell_{m,s}/n_j} \bigr\}} F_{m,s}(z) \ d\mu(z), \ \ \ \
I_{2,2}^j = \sum_{
\begin{array}{c}\scriptstyle
mN \le s - 2\\
         \vspace{-5pt}\scriptstyle n_j | \,\ell_{m,s}
         \end{array} }
I_{2,2}^{m,j}
$$
and
$$
I_{2,2}^{*} \ = \
q^{-(1+r)(s-1)/N}
\int_{\{|z|=1 \ : \ |z-\zeta_j| = 1, \, \forall j \}}
F_{(s-1)/N,s}(z) \ d\mu(z) .
$$
The term $I_{2,2}^{*}$ appears only if $s \equiv 1$ mod $N$
and if this is the case, then property (C3) implies that
\begin{equation}\label{I22-empty}
|I_{2,2}^{*}| \ \le \
C q^{-1/2} q^{-(1+r)(s-1)/N} \ = \ C q^{-[\frac{1}{2} - \frac{1}{d(f)}]}
q^{-s/d(f)}.
\end{equation}
We note $j\in {\mathcal I}_{\mathfrak p}^{m,s}$ implies
$n_j | \ s-1-mN$ which in turn implies ${\rm gcd}(n_j,N) | \, (s-1)$.
Therefore if ${\rm gcd}(n_j,N) \ge 2$ and
$s\equiv 0$ mod ${\rm gcd}(n_j,N)$, then $j\notin
{\mathcal I}_{\mathfrak p}^{m,s}$ for any $m\ge 0$.

Again, using property (C3) to estimate $I_{2,2}^{m,j}$,
we see that
$$
|I_{2,2}^{m,j}| \ \le \ C q^{-[\frac{1}{2} - \frac{1}{n_j}]}
q^{-\frac{s}{n_j}} q^{-m\frac{1+r}{n_j}[n_j - d(f)]}
$$
and so $|I_{2,2}^j| \ \le$
\begin{equation}\label{I22-improvement}
C q^{-[\frac{1}{2} - \frac{1}{n_j}]}
q^{-s/n_j}, \ \ C s q^{-[\frac{1}{2} - \frac{1}{d(f)}]}
q^{-s/d(f)}, \ \ {\rm or} \
C q^{-[\frac{1}{2} + \frac{1}{n_j} - \frac{2}{d(f)}]}
q^{-s/d(f)}
\end{equation}
depending on whether $d(f) < n_j, \ d(f) = n_j$
or $n_j < d(f)$, respectively. Recall that $I_{2,2} = 0$
for polynomial congruences and so it is only when treating the
character sums ${\mathcal S}_{\chi}$ that $I_{2,2}$ arises and in
this case we are assuming $d(f)\ge 2$ in this section.
In particular we have $|I_{2,2}| \le C q^{-s/d(f)}$
when ${\mathcal I}_{\mathfrak p} = \emptyset$ by \eqref{I22-empty}
and the estimates
\begin{equation}\label{I22-estimate2}
|I_{2,2}^j| \le C q^{-s/n_j}, \ \ |I_{2,2}^j| \le C s q^{-s/d(f)}
\ \ {\rm or} \ \
|I_{2,2}^j| \le  C q^{-s/d(f)}
\end{equation}
for $j\in {\mathcal I}_{\mathfrak p}$
if $d(f)<n_j, \ d(f) = n_j$ or $n_j < d(f)$, respectively.

\subsubsection{Estimates for $I_2$: the general case}
Here we describe the modifications needed when
$\kappa_2/\kappa_1 \notin {\Bbb N}$ or $t\ge 2$.
In the end we will arrive at the same estimates
\eqref{I21j-estimate}, \eqref{I21j-below} and \eqref{I21j-infinite}
for $I_{2,1}$ and \eqref{I22-estimate2} for $I_{2,2}$
for general $t\ge 1$.
Once we have succeeded in the initial change of variables
indicated at the outset of Section \ref{I}, the argument
for the estimates for $I_{2,1}$ are the same as in the
case $t=1$. However the argument to establish estimate \eqref{I22-estimate2} for general $t\ge 1$
will differ slightly from the case $t=1$ described above.

We write
$$
I_2 \ = \ \sum_{
          m N \le s - 1
          }
q^{-(t+r)m} \int_{|x|=1} F(x) \, d\mu(x)
$$
where
$$ F(x) \ := \ \int_{|y|=1} {\mathcal
C}(\pi^{-s + mN} f(x,y)) \, d\mu(y).
$$
We would like to make a change of variables $u = \phi(x)$
so that $u^t = x$. In order to carry this out, we fix
a generator $g$ of the multiplicative cyclic group
$G := {\Bbb F}_q \setminus \{0\}$
of nonzero elements of our underlying finite field
${\Bbb F}_q = {\bar{\mathfrak o}}/\pi{\bar{\mathfrak o}}$
with $q = p^f$ elements. Set $d := {\rm gcd}(t,q-1)$
and recall that the $t$th powers of $G$ are given by
$G^t = \{ g^d, g^{2d}, \ldots,
g^{(q-1)/d \cdot d} = g^{q-1} = 1\}$. Furthermore set
$$
D \ := \ \{|x| = 1 : x = x_0 + x_1 \pi + x_2 \pi^2 + \cdots, \ \ \
x_0 \in G^t \}
$$
so that $\{|x| =1 \}$ has the decomposition
$$
\{ x \in {\bar{\mathfrak o}} : |x| = 1 \} \ = \
D \cup g D \cup g^2 D \cdots\cdots \cup g^{d-1} D \ = \
\bigcup_{e=0}^{d-1} g^e D
$$
into $d$ disjoint open sets. Therefore we can write
$$
\int_{|x|=1} F(x) \, d\mu(x) \ = \ \sum_{e=0}^{d-1} \int_{g^e D} F(x) \, d\mu(x) \ = \ \sum_{e=0}^{d-1} \int_D F(g^e x) d\mu(x).
$$
For each $x = x_0 + x_1 \pi + \cdots \in D$, there are precisely
$d$ solutions $u_0 \in {\Bbb F}_q \setminus \{0\}$ to
$u_0^t = x_0$, and by Hensel's lemma (note that if the characteristic
of ${\Bbb F}_q$ is positive, it is larger than $t$ by hypothesis) each such solution lifts uniquely to a solution
$u \in {\bar{\mathfrak o}}$ of $u^t = x$. We single out the
solution corresponding to $u_0 = g^{\theta}$ with
$0\le \theta \le (q-1)/d - 1$. This defines an analytic
isomorphism $\phi: D \to \phi(D)$ so that if $u = \phi(x)$,
then $u^t = x$. Therefore we can make the change of variables
$u = \phi(x)$ (see \cite{I-2} for a general change of variables
formula in our setting)
in each of the $d$ integrals above,
$$
\int_{|x|=1} F(x) \, d\mu(x) = \sum_{e=0}^{d-1} |t\cdot{\bf 1}|
\int_{\phi(D)} F(g^e u^t) \, d\mu(u)
$$
so that (we throw in $t\cdot{\bf 1}$ into our collection
${\mathcal A}$ in Section \ref{exceptional} to ensure
that $|t\cdot{\bf 1}| = |t\cdot{\bf 1}|_{\mathfrak p} = 1$ for all ${\mathfrak p}\notin {\mathcal P}(f)$)
$$
I_2 \ = \ \sum_{e=0}^{d-1} \sum_{
          m N \le s - 1
          }
q^{-(t+r)m} \int_{\phi(D)} F(g^e u^t ) \, d\mu(u).
$$
The function $F(x)$ is an integral in $y$ and
as before we make the change of variables $z = u^r y$
in the $y$ integral which brings us to the
analogue of \eqref{I2-homogeneous} for general $t$:
$$
I_2 \ = \  \sum_{e=0}^{d-1} \sum_{
          m N \le s - 1
          }
q^{-(1+r)m} \int_{\phi(D)} \left[\int_{|z|=1} {\mathcal C}(\pi^{-s +
mN} h_e(z) u^N ) \, d\mu(z)\right] d\mu(u)
$$
where $h_e(z) = a z^{\beta} \prod_{j=1}^M (z -  \zeta_j g^{er})^{n_j}$.
We now proceed exactly as in the case $t=1$, interchanging
the order of integration and decomposing the $z$ integral
according to the size of $h_e(z)$, etc... The main difference
is that the $x$ integral over the set $\{|x|=1\}$ has now
been replaced by a $u$ integral over the set $\phi(D)$. From
the definition of $\phi$ and $D$ we see that
$$
\phi(D) \ = \ \bigcup_{\theta = 0}^{\frac{q-1}{d} - 1}
B_{q^{-1}}(u_{\theta}) \ = \ \bigcup_{\theta = 0}^{\frac{q-1}{d} - 1}
\{u\in {\bar{\mathfrak o}} : |u - u_{\theta}|\le q^{-1} \}
$$
is a disjoint union of $(q-1)/d$ balls where $u_{\theta} = g^{\theta}$.
Hence $\mu(\phi(D)) = (1 - q^{-1})/d$. For a fixed $0\le e \le d-1$
and $m N \le s-1$, we need to understand $\sum_{\ell\ge 0}
H_{m,e}^{\ell}$
where
$$
H_{m,e}^{\ell} \ := \
\int_{\{|z|=1: |h_e(z)|=q^{-\ell}\}} d\mu(z)
\int_{\phi(D)} {\mathcal C}(\pi^{-s+mN+\ell} [\pi^{-\ell}h_e(z)]
u^N) \, d\mu(u)
$$
so that
$$
I_2 \ = \ \sum_{e=0}^{d-1} \sum_{mN\le s-1} q^{-(r+t)m}
\sum_{\ell\ge 0} H_{m,e}^{\ell}.
$$
As mentioned earlier there is a slight strengthening of property (C2),
namely that integration over $\{|x|=1\}$ can be replaced
by any finite union of disjoint balls $B_{q^{-1}}$ with
radius $q^{-1}$, which holds for our ${\mathcal C} = \psi$ and
${\mathcal C} = {\bf 1}_{\bar{\mathfrak o}}$. Therefore as before,
the $\ell$ sum above vanishes when $\ell + mN - s \le -2$.
By property (C1) the inner $u$ integral
in equals to $(1-q^{-1})/d$
for $\ell \ge s - mN$. Hence, proceeding exactly
as in the $t=1$ case, we split $I_2 = I_{2,1}
 + I_{2,2}$
where
$$
I_{2,1} =
(1-q^{-1})/d \, \sum_{e=0}^{d-1} \sum_{mN\le s-1} q^{-(r+t)m}
\sum_{\ell\ge s-mN} \mu\bigl(\bigl\{|z|=1:
|h_e(z)|=q^{-\ell}\bigr\}\bigr)
$$
and $I_{2,2} =$
$$
\sum_{e=0}^{d-1} \sum_{mN\le s-1} q^{-(r+t)m} \int_{\bigl\{|z|=1: |h_e(z)|=q^{-\ell_{m,s}}\bigr\}}
\Bigl[\int_{\phi(D)}
{\mathcal C}(\pi^{-1} [\pi^{-\ell_{m,s}} h_e(z)] u^N) d\mu(u)\Bigr]
d\mu(z)
$$
where as before $\ell_{m,s} = s - mN - 1$.
If ${\mathcal C} = {\bf 1}_{\bar{\mathfrak o}}$,
the case for the problem of polynomial congruences,
then $I_{2,2} = 0$.

As in the case $t=1$, we see that the estimates
\eqref{I21j-estimate}, \eqref{I21j-below}
and \eqref{I21j-infinite}
for $I_{2,1}$ hold in the general case. The estimates
\eqref{I22-empty}, \eqref{I22-improvement} and
\eqref{I22-estimate2} for $I_{2,2}$, although true, require
a modified argument. We write
$I_{2,2} = \sum_{e=0}^{d-1} \sum_{mN\le s-1} q^{-(r+t)m} S_{e,m}$
where
$$
S_{e,m} \ = \
\int_{\bigl\{|z|=1: |h_e(z)|=q^{-\ell_{m,s}}\bigr\}}
\Bigl[\int_{\phi(D)}
{\mathcal C}(\pi^{-1} [\pi^{-\ell_{m,s}} h_e(z)] u^N) d\mu(u)\Bigr]
d\mu(z).
$$
Suppose first that $\ell_{s,m} = s - mN -1 \ge 1$. Then
from \eqref{PS-used}, we have
$$
S_{e,m} =
\sum_{j\in
{\mathcal I}_{{\mathfrak p}}^{m,s}} \int_{\{|z|=1 : |z -
g^{er} \zeta_j| = q^{-\ell_{m,s}/n_j}\}}
\Bigl[\int_{\phi(D)}
{\mathcal C}(\pi^{-1} [\pi^{-\ell_{m,s}} h_e(z)] u^N) d\mu(u)\Bigr]
d\mu(z)
$$
where as before
${\mathcal I}_{{\mathfrak p}}^{m,s} = \{j\in {\mathcal I_{\mathfrak p}} : n_j \, | \ \ell_{m,s} \}$. For each $j \in
{\mathcal I}_{{\mathfrak p}}^{m,s}$, we make the change of
variables $w = \pi^{-\ell_{m,s}/n_j}(z-g^{er}\zeta_j)$ so that
$$
S_{e,m} =
\sum_{j\in
{\mathcal I}_{{\mathfrak p},m}}
q^{-\ell_{m,s}/n_j}
 \int_{|w|=1}
\Bigl[\int_{\phi(D)}
{\mathcal C}(\pi^{-1} K_e w^{n_j} u^N) d\mu(u)\Bigr]
d\mu(z)
$$
where $K_e = a [g^{er}\zeta_j]^{\beta}
\prod_{k\not= j} g^{ern_k}(\zeta_j - \zeta_k)^{n_k}$;
in particular $|K_e| = |K_e|_{\mathfrak p} = 1$ for
all ${\mathfrak p} \notin {\mathcal P}(f)$.
The assumption $\ell_{m,s}\ge 1$ was used here.

Therefore we can write
$$
I_{2,2} \ = \ \sum_{e=0}^{d-1}\sum_{mN\le s-2}  \sum_{j\in
{\mathcal I}_{\mathfrak p}^{m,s}} I_{e}^{m,j} \ + \
I_{2,2}^{*} \ \
:= \ \ \sum_{j\in {\mathcal I}_{\mathfrak p}} I_{2,2}^j
\ + \ I_{2,2}^{*}
$$
where
$$
I_{e}^{m,j} = q^{-(r+t)m-\ell_{m,s}/n_j}
\int_{\phi(D)}\Bigl[
\int_{|w|=1}
{\mathcal C}(\pi^{-1} K_e w^{n_j} u^N) d\mu(w)\Bigr]
d\mu(w)
$$
and
$$
I_{2,2}^{*} \ = \
q^{-(t+r)(s-1)/N}
\int_{\{|z|=1: |h_e(z)|= 1\}}
\Bigl[\int_{\phi(D)}
{\mathcal C}(\pi^{-1} [ h_e(z)] u^N) d\mu(u)\Bigr]
d\mu(z).
$$
The term $I_{2,2}^{*}$ appears only if $s \equiv 1$ mod $N$.
If ${\mathcal I}_{\mathfrak p} = \emptyset$,
then
$\{|z|=1 : |h_e(z)| = 1 \} = \{|z|=1\}$
and
if
${\mathcal I}_{\mathfrak p} \not= \emptyset$,
$$
\{|z|=1: |h_e(z)| = 1\} \ = \
\bigl\{z\in {\bar{\mathfrak o}} : |z|=1\bigr\} \setminus
\bigcup_{j\in {\mathcal I}_{\mathfrak p}} B_{q^{-1}}(z_{0,j})
$$
where $g^{er} \zeta_j = z_{0,j} + z_{1,j} \pi + \cdots$
(of course for each $j\in {\mathcal I}_{\mathfrak p}$,
$g^{er}\zeta_j \in {\bar{\mathfrak o}}$). Whether
${\mathcal I}_{\mathfrak p}$ is or is not empty, we interchange
the order of integration so that
$$
I_{2,2}^{*} = q^{-(t+r)(s-1)/N}
\Bigl[
 \int_{\phi(D)}
\Bigl[\int_{|z|\le 1}
{\mathcal C}(\pi^{-1} u^N h_{e}(z)) d\mu(z)\Bigr]
d\mu(u) \ + \ E\Bigr]
$$
where $|E| \le C q^{-1}$.
When ${\mathcal C} = \psi$, the integral $\int_{|z|\le 1}
\psi(\pi^{-1} u^N h_e(z)) d\mu(z)$ is a character sum
over a finite field for each fixed $u$ with $|u|=1$
and therefore we can appeal to A. Weil's work to obtain the bound
\begin{equation}\label{I22-empty-again}
|I_{2,2}^{*}| \ \le \ C q^{-[\frac{1}{2} - \frac{1}{d(f)}]}
q^{-s/d(f)}
\end{equation}
in this case.

Using property (C3) to estimate $I_{e}^{m,j}$,
we see that
$$
|I_{e}^{m,j}| \ \le \ C q^{-[\frac{1}{2} - \frac{1}{n_j}]}
q^{-\frac{s}{n_j}} q^{-m\frac{t+r}{n_j}[n_j - d(f)]}
$$
and so $|I_{2,2}^j| \ \le$
\begin{equation}\label{I22-improvement-again}
C q^{-[\frac{1}{2} - \frac{1}{n_j}]}
q^{-s/n_j}, \ \ C s q^{-[\frac{1}{2} - \frac{1}{d(f)}]}
q^{-s/d(f)}, \ \ {\rm or} \
C q^{-[\frac{1}{2} + \frac{1}{n_j} - \frac{2}{d(f)}]}
q^{-s/d(f)}
\end{equation}
depending on whether $d(f) < n_j, \ d(f) = n_j$
or $n_j < d(f)$, respectively. Recall that $I_{2,2} = 0$
for polynomial congruences and so it is only when treating the
character sums ${\mathcal S}_{\chi}$ that $I_{2,2}$ arises and in
this case we are assuming $d(f)\ge 2$ in this section.
In particular we have $|I_{2,2}^{*}| \le C q^{-s/d(f)}$
by \eqref{I22-empty-again}
and the estimates
\begin{equation}\label{I22-estimate2-again}
|I_{2,2}^j| \le C q^{-s/n_j}, \ \ |I_{2,2}^j| \le C s q^{-s/d(f)}
\ \ {\rm or} \ \
|I_{2,2}^j| \le  C q^{-s/d(f)}
\end{equation}
for $j\in {\mathcal I}_{\mathfrak p}$
if $d(f)<n_j, \ d(f) = n_j$ or $n_j < d(f)$, respectively.

Finally we observe that if $j \in {\mathcal I}_{{\mathfrak p}}^{m,s}$,
then $n_j \, | \ \ell_{m,s}$ and this implies that
${\rm gcd}(N,n_j) \, | \ s-1$. Therefore if
${\rm gcd}(n_j,N) \ge 2$ and
$s \equiv 0$ mod
${\rm gcd}(N, n_j)$, then
$j\notin {\mathcal I}_{{\mathfrak p}}^{m,s}$
for any $m\ge 0$.
This will be important when we turn our attention to
the lower bound \eqref{abstract-sum-infinite} in Theorem \ref{main-abstract}.

\subsubsection{Putting the estimates together for $I$}\label{I}
We combine the estimates derived above to give bounds
for
$$
I \ = \ I_1 + I_2 \ = \ I_1  \ +  \ \sum_{j\in {\mathcal I}_{\mathfrak p}} I^j_{2,1} \ + \ I_{2,2}.
$$
First we consider upper bounds for $I$ and
we begin by treating those $f$ not in any $E_m, \, m\ge 1$.
Then by Lemma \ref{IM-2} we see that
$m_K(f) \ge d(f)$ if and only if
there is some multiplicity $n_j \ge d(f)$ (and so
necessarily the multiplicity $n_j$ is
associated to a root $\zeta_j \in K$).
Therefore
the estimates \eqref{I1-above},
\eqref{I21j-estimate} (valid for general $t\ge 1$),
\eqref{I22-empty-again}
and \eqref{I22-estimate2-again} give the
desired bound for $I$ from above; when ${\mathfrak p}\notin {\mathcal P}(f)$,
\begin{equation}\label{I-conj-above}
|I| \ \le \ C s^{\nu(f)} q^{-s/h(f)}
\end{equation}
where $C=C_{deg(f)}$.
This holds for general $f$
if ${\mathcal C} = {\bf 1}_{\bar{\mathfrak o}}$
and for $f$ with $d(f)\ge 2$ if
${\mathcal C} = \psi$; recall that in this
section, when ${\mathcal C} = \psi$, we are assuming
that $2\le d(f)$  in which case
$i(f) = \nu(f)$ (see Section \ref{two-cases} for
the case of character sums when $d(f)<2$).

When $f\in E_m, \, m\ge 2$, there are two conjugate
roots $\zeta, \zeta^{*}$ of degree 2 over $K$, $m_K(f) = 0$,
$m = d(f) = h(f)$ and $n_1 = n_2 = m$. If $\zeta, \zeta^{*} \in {\bar K}$,
then $i_{\mathfrak p}(f) = \nu_{\mathfrak p}(f) = 1$
and ${\mathcal I}_{\mathfrak p} =
\{1,2\}$. On the other hand if $\zeta, \zeta^{*} \notin {\bar K}$,
then $i_{\mathfrak p}(f) = \nu_{\mathfrak p}(f) =0$, ${\mathcal I}_{\mathfrak p}
= \emptyset$ and so $I_{2,1} = 0$. Hence \eqref{I1-above},
\eqref{I21j-estimate},
\eqref{I22-empty-again} and \eqref{I22-estimate2-again}
show that
\begin{equation}\label{I-conj-exception}
|I| \ \le \ C s^{{\nu}_{\mathfrak p}(f)} q^{-s/h(f)}
\end{equation}
when ${\mathfrak p} \notin {\mathcal P}(f)$.
When $f\in E_1$, $m_K(f) = 0$ and $d(f) = 1$.
The estimates for the character sum $S_{\chi}$ are
treated in Section \ref{E1}; note that for
the character sum, $f \in E_1$ is not an exceptional
case and we obtain uniform bounds for all $s\ge 1$ and
${\mathfrak p} \notin {\mathcal P}(f)$. This leaves
establishing \eqref{I-conj-exception}
for polynomial congruences when $f \in E_1$. Recall
that $I_{2,2} = 0$ for polynomial congruences.
If $\zeta, \zeta^{*} \in {\bar K}$,
then $\nu_{\mathfrak p}(f) = 1$
and ${\mathcal I}_{\mathfrak p} =
\{1,2\}$. Therefore \eqref{I1-above} and
\eqref{I21j-estimate} give the desired estimate.
On the other hand if $\zeta, \zeta^{*} \notin {\bar K}$,
then $\nu_{\mathfrak p}(f) =0$, ${\mathcal I}_{\mathfrak p}
= \emptyset$ and so $I_{2,1} = 0$ implying $I_2 = 0$. Hence $I = I_1$
and so \eqref{I1-above}
alone gives the desired estimate.

Next we will show that for polynomial
congruences ${\mathcal C} = {\bf 1}_{\bar{\mathfrak o}}$,
the lower bound
\begin{equation}\label{I-poly-infinite}
c s^{\nu(f)} q^{-s/h(f)} \ \le \ I
\end{equation}
holds for infinitely many $s\ge 1$, for some $c = c_{deg(f)} > 0$
and ${\mathfrak p}\notin {\mathcal  P}(f)$
when $f$ does not lie in any $E_m$, $m\ge 1$.
Since ${\mathcal N}(f;\pi^s) \ge I$, the bound
\eqref{I-poly-infinite} establishes the
lower bound \eqref{abstract-poly-infinite}
in Theorem \ref{main-abstract}. To prove
\eqref{I-poly-infinite} we will restrict to
$s\equiv 0$ mod $N \prod_{j\in {\mathcal I}_{\mathfrak p}} n_j$ and
in particular $s \equiv 0$ mod $N$ so \eqref{I1-infinite}
implies $I \ge I_1 \ge c q^{-s/d(f)}$ for
these values of $s$. This proves \eqref{I-poly-infinite}
when $m_K(f) < d(f)$ in which case $\nu(f) = 0$
and $d(f) = h(f)$. When $m_K(f) \ge d(f)$,
Lemma \ref{IM-2} implies that there is a unique
root $\zeta_{j_{*}} \in K$ such that $n_{j_{*}}\ge d(f)$.
and so \eqref{I21j-infinite} implies
$I \ge I_{2,1}^{j_{*}} \ge c s^{\nu(f)} q^{-s/h(f)}$
whenever $s \equiv 0$ mod $n_{j_{*}}$. This
establishes \eqref{I-poly-infinite} and therefore
\eqref{abstract-poly-infinite} when $f \notin E_m$
for any $m\ge 1$. When $f\in E_m$ for some $m\ge 1$,
then $h(f) = d(f) = m = n_1 = n_2$.
If the conjugate roots $\zeta, \zeta_{*}$ do not
belong to ${\bar K}$, then $\nu_{\mathfrak p}(f) = 0$
and \eqref{I1-infinite} implies
$I  \ge I_1 \ge c q^{-s/d(f)}$ if $s \equiv 0$ mod $N$.
If the conjugate roots belong to ${\bar K}$, then
$\nu_{\mathfrak p}(f) = 1$, ${\mathcal I}_{\mathfrak p}
= \{1,2\}$ and \eqref{I21j-infinite} implies
$I \ge I_{2,1}^1 \ge c s q^{-s/d(f)}$
if $s \equiv 0$ mod $m$. Hence \eqref{I-poly-infinite}
(and hence \eqref{abstract-poly-infinite})
holds for $f\in E_m$ with $\nu(f)$ replaced
by $\nu_{\mathfrak p}(f)$.

We now show that when ${\mathcal C} = \psi$
(that is for character sums), $f$ is not linear
and $f \notin E_m$ for any $m\ge 2$,
\begin{equation}\label{I-sum-infinite}
c s^{i(f)} q^{-s/h(f)} \ \le \ |I|
\end{equation}
holds for infinitely many $s\ge 1$, for some $c = c_{deg(f)} > 0$
and ${\mathfrak p}\notin {\mathcal  P}(f)$.
Recall that in this section, we are assuming
that $d(f)\ge 2$ when
treating character sums; the case when $d(f) < 2$
is treated in Section \ref{two-cases}. Nevertheless
the analysis we give here will handle certain
situations when $d(f)<2$; more precisely,
the analysis will cover those $f$ with
$m_K(f) < d(f)$ (except for the case when
$f\in E_1$ which we treat separately in
Section \ref{E1}) and also those $f$ with
$d(f) \le m_K(f)$ but either $2 < m_K(f)$
or $2 = d(f) = m_K(f)$. This
will help alleviate
the analysis in Section \ref{two-cases}.

Note that $f(x,y) = a y + b x$ is linear if
and only if $N = t \alpha + r \beta + t r n = 1$
and so we may assume without
loss of generality that $N\ge 2$. Furthermore
we will restrict ourselves to
$s\equiv 0$ mod $N \prod_{j\in {\mathcal I}_{\mathfrak p}} n_j$
when establishing \eqref{I-sum-infinite} and so in
particular $s\not\equiv 1$ mod $N$ since $N\ge 2$
which implies that $I_{2,2}^{*} = 0$ for these
values of $s$.
We consider first the case $m_K(f)\ge d(f)$
(then $h(f) = m_K(f) \ge d(f)$) and so
by Lemma \ref{IM-2},
there is a unique
root $\zeta_{j_{*}} \in K$ with multiplicity
$n_{j_{*}} = h(f) \ge d(f)$.
Suppose first $n_{j_{*}} > 2$ in which case
we will use the improved bound \eqref{I22-improvement-again}
for $I_{2,2}^{j_{*}}$ so that for any $\epsilon>0$,
$|I_{2,2}^{j_{*}}| \le \epsilon s^{\nu(f)} q^{-s/h(f)}$
if $q$ is large enough and this is the case when
${\mathfrak p} \notin {\mathcal P}(f)$. For
other $j\in {\mathcal I}_{\mathfrak p}$ with $j\not= j_{*}$,
we have $n_j < d(f)$ and we will use the bound in \eqref{I22-improvement-again}
$$
|I_{2,2}^j| \ \le \ C
q^{-[\frac{1}{2} + \frac{1}{n_j} - \frac{2}{d(f)}]}
q^{-s/d(f)},
$$
noting that the exponent $\frac{1}{2} + \frac{1}{n_j} - \frac{2}{d(f)}$
is always strictly positive. This is certainly the case if
$d(f)\ge 2$ and if $d(f)<2$, then $n_j =1$ and so the exponent
still remains strictly positive as long as $d\ge 4/3$. We
claim that the existence of two multiplicities,
$n_j = 1 < d(f) \le n_{j_{*}}$ rules out the possibility
that $d(f) \le 4/3$. If $d(f) \le 4/3$, then
$$
3 r t \le tr (1 + n_{j_{*}}) \le t r n \le N =
d(f)(r + t) \le (4/3)(r + t)
$$
which implies that $9 r t \le 4 t + 4 r$ and this
is easily seen to be impossible. Therefore for any
$\epsilon>0$ and for any
$j\in {\mathcal I}_{\mathfrak p}$ with $j \not= j_{*}$,
we have the same estimate as for
$I_{2,2}^{j_{*}}$; namely,
$|I_{2,2}^j| \le \epsilon s^{i(f)} q^{-s/h(f)}$
for $q$ large enough. Finally since $s\equiv 0$ mod
$n_{j_{*}}$, \eqref{I21j-infinite} implies
$I_{2,1} \ge c s^{i(f)} q^{-s/h(f)}$ and so for $q$
large enough,
$$
|I| = |I_1 + I_{2,1} + I_{2,2}| \ge I_1 + I_{2,1} - \epsilon
s^{i(f)} q^{-s/h(f)} \ge (c/2) s^{i(f)} q^{-s/h(f)}
$$
since $I_1\ge 0$. This establishes \eqref{I-sum-infinite}
when $n_{j_{*}}=h(f) > 2$ and we turn now to the case
$n_{j_{*}} = m_K(f) = d(f) = 2$. In this case,
$N = d(f)(t+r) = 2 (t+r)$ and so ${\rm gcd}(n_{j_{*}},N) = 2$
implying $I_{2,2}^{j_{*}} = 0$; recall that
$$
I_{2,2}^{j_{*}} = \sum_{e=0}^{d-1}\sum_{
\begin{array}{c}\scriptstyle
mN \le s - 2\\
         \vspace{-5pt}\scriptstyle n_{j_{*}} | \,\ell_{m,s}
         \end{array} }
I_{e}^{m,j_{*}}
$$
and $n_{j_{*}} = 2 \not| \, \ell_{m,s} = s - m N - 1$
for any $m\ge 0$ since $N = 2 (t+r)$ and $s \equiv 0$ mod
$n_{j_{*}} = 2$. All other $j\in  {\mathcal I}_{\mathfrak p}$
with $j\not= j_{*}$ must satisfy $n_j = 1$ since
$n_j < d(f) = 2$ and so the analysis above shows
$|I_{2,2}| \le \epsilon s^{i(f)} q^{-s/h(f)}$ if $q$
is large enough which leads to the bound \eqref{I-sum-infinite}
as before.

We turn to the case $m_K(f) < d(f)$ and here,
again by Lemma \ref{IM-2} (recall that we are
not treating $f\in E_1$, see
Section \ref{E1} for this case), we have $n_j < d(f)$
for every $j\in {\mathcal I}_{\mathfrak p}$.
If $d(f)> 4/3$, the improved estimate \eqref{I22-improvement-again}
shows that for each $j\in {\mathcal I}_{\mathfrak p}$,
$|I_{2,2}^j| \le
\epsilon q^{-s/d(f)}$ for $q$ large enough.
If $d(f)\le 4/3$, then necessarily $\alpha = \beta = 0$
and either $n=1$ or $n=2$. If $n=1$, then
$f(x,y) = a(y^t - \zeta x^r)$ for some $\zeta \in K$
and the character sum $S_{\chi}(f;\pi^s)$ can be easily
evaluated when $s \equiv 0$ mod $N = r t$ (see
for example, Section \ref{two-cases}); in particular
one verifies that \eqref{I-sum-infinite} or more
generally the lower bound
\eqref{abstract-sum-infinite} in Theorem \ref{main-abstract}
in this case.
When $n=2$, then necessarily $t=1$, $r=2$, $N=4$, $d(f) = 4/3$
and so $f(x,y) = a (y - \zeta x^2) ( y - \eta x^2)$
for some $\zeta \not= \eta \in K$. In this case
we can tweak the argument above and improve upon the
estimate \eqref{I22-estimate2-again} for each $I_{2,2}^j$;
in fact, since $N=4$ and $s\equiv 0$ mod $N$ in
this case, the terms $mN = s-1, mN = s-2$ and $mN = s-3$
do not arise in the $m$ sum defining $I_{2,2}^j$ leading
to the improved bound
$$
|I_{2,2}^j| \ \le \ C q^{-[\frac{1}{2} + \frac{3}{n_j} -
\frac{4}{d(f)}]} q^{-s/d(f)}.
$$
Since $n_1 = n_2 = 1$ and $d(f) = 4/3$ in this case,
we again can conclude $|I_{2,2}^j| \le \epsilon q^{-s/d(f)}$
if $q$ is large enough. Altogether, when $m_K(f) < d(f)$
and $f\notin E_1$, we have
$|I_{2,2}| \le \epsilon q^{-s/d(f)}$ if $q$ is large enough
and so, since $I_1 \ge c q^{-s/d(f)}$ when $s \equiv 0$ mod $N$
by \eqref{I1-infinite},
$$
|I| = |I_1 + I_{2,1} + I_{2,2}| \ge I_1 + I_{2,1} -
\epsilon q^{-s/d(f)} \ge (c/2) q^{-s/d(f)} = (c/2) s^{i(f)}q^{-s/h(f)}
$$
since $I_{2,1} \ge 0$. This establishes \eqref{I-sum-infinite}
when $m_K(f) < d(f)$ and $f\notin E_1$.

When $f\in E_m$ for some $m\ge 2$, the estimate
\eqref{I-sum-infinite} holds with the exponent $i(f)$
replaced by $i_{\mathfrak p}(f)$.
When the conjugate pair $\zeta, \zeta^{*}$ lies outside
${\bar K}$, $i_{\mathfrak p}(f) = 0$
and ${\mathcal I}_{\mathfrak p} = \emptyset$. Hence
$I_{2,1} = I_{2,2} =0$ when $s$ is restricted
to $s\equiv 0$ mod $N$ and so $I = I_1 \ge c q^{-s/d(f)}$
by \eqref{I1-infinite} when $s\equiv 0$ mod $N$,
proving \eqref{I-sum-infinite} with
$i(f) = i_{\mathfrak p}(f)$ in this case.
When the conjugate pair belongs to ${\bar K}$,
then $i_{\mathfrak p}(f) = 1$
and ${\mathcal I}_{\mathfrak p} = \{1,2\}$
lists the two multiplicities $n_1 = n_2 = m = d(f) = h(f)$
corresponding to the roots $\zeta$ and $\zeta^{*}$.
The bound \eqref{I21j-infinite} shows that
$I_1 + I_{2,1} \ge I_{2,1} \ge c s q^{-s/h(f)}$
for $s\equiv 0$ mod $m$. Proceeding as in the
analysis above when $f\notin E_m$, we see that $I_{2,2} = 0$
when $s \equiv 0$ mod $N$ since
then $I_{2,2}^{*}=0$ and
$I_{2,2}^j = 0$ for both $j=1$ and $j=2$; in fact,
$n_1 = n_2 = m \ge 2$
never divides $\ell_{k,s} = s - kN - 1$ for any $k\ge 0$ if
$s\equiv 0$ mod $N$. In fact, writing $s = k_{*} N$
and noting $N = 2m$, we have $\ell_{k,s} = 2m(k_{*} - k) - 1$
and so $m$ does not divide $\ell_{k,s}$ for
any $k$ since $m\ge 2$. Therefore
$I = I_1 + I_{2,1} \ge c s q^{-s/h(f)}$ in this case,
establishing \eqref{I-sum-infinite} with
$i(f)$ replaced with $i_{\mathfrak p}(f)$.

We note that when $f\in E_1$, the multiplicities
$n_1, n_2$ of the roots $\zeta$ and $\zeta^{*}$ are equal
to 1 which divides $\ell_{k,s}$ for every $k$ and so
$I_{2,2}$ will give a nontrivial contribution to the
character sum ${\mathcal S}_{\chi}$. In fact the
contribution $I_{2,2}$ cancels exactly with $I_{2,1}$
for $f\in E_1$ when $s \equiv 0$ mod $N=2$ (see Section \ref{E1}).

%

\subsection{Estimates for $II$}\label{II}
Next we treat $II$ which we write as
$$
II \ = \   \sum_{
\begin{array}{c}\scriptstyle
k_1, k_2 \ge 0\\
         \vspace{-5pt}\scriptstyle t k_2 < r k_1
         \end{array} } q^{-k_1-k_2}
\int\!\!\!\int_{|x|= 1, |y|=1} {\mathcal C}(\pi^{-s + k_1 \alpha +
k_2(\beta + t n)} f_{k_1,k_2}(x,y)) \, d\mu(x) d\mu(y)
$$
where
$$
f_{k_1,k_2}(x,y) = a x^{\alpha}y^{\beta + t n} + c \pi^{rk_1 - tk_2}
x^{\alpha + r} y^{\beta + t(n -1)} + \cdots + b \pi^{n(rk_1 - tk_2)}
x^{\alpha + rn} y^{\beta}.
$$
By property (C2) we see that the above sum vanishes when
$k_1\alpha + k_2(\beta + tn) \le s-2$ and therefore $II = II_1 + II_2$
where
$$
II_1 \ = \ (1-q^{-1})^2 \sum_{
\begin{array}{c}\scriptstyle
tk_2 < r k_1 \\
         \vspace{-5pt}\scriptstyle s\le k_1\alpha + k_2(\beta+tn)
         \end{array} } q^{-k_1-k_2},
$$
using property (C1), and
$$
II_2 \ = \ \sum_{
\begin{array}{c}\scriptstyle
tk_2 < r k_1 \\
         \vspace{-5pt}\scriptstyle  k_1\alpha + k_2(\beta+tn) = s-1
         \end{array} } q^{-k_1-k_2}
\int\!\!\!\int_{|x|= 1, |y|=1} {\mathcal C}(\pi^{-1} a x^{\alpha}y^{\beta + tn}) \, d\mu(x) d\mu(y).
$$
When we turn to establish \eqref{abstract-sum-infinite}
in Theorem \ref{main-abstract} for character sums, we will ensure that
$s$ lies along the subsequence $s \equiv 0$ mod
${\rm gcd}(\alpha, \beta + t n)$ so that if
${\rm gcd}(\alpha, \beta + tn) \ge 2$, the sum
defining $II_2$ is empty for these values of $s$.
On the other hand, if ${\rm gcd}(\alpha,\beta + t n) =1$,
then the double integral above is $-(1-q^{-1})q^{-1}$;
see Section \ref{monomial} for this easy computation.
This gives a better bound than the $q^{-1/2}$ bound which
property (C3) gives and we will use this improvement for
\eqref{abstract-sum-infinite}.

In the summand defining $II_1$, we have the bounds
$$
(s - k_1 \alpha)/(\beta + t n) \ \le \ k_2 \ < \ (r/t) k_1
$$
and so we can preform the $k_2$ sum first to bound
\begin{equation}\label{II1-initial}
II_1 \ \le \ C q^{-s/(\beta + t n)} \sum_{
t s/N < k_1} q^{- k_1 [ 1 - \alpha/(\beta + tn)]} .
\end{equation}
We divide the analysis into three
cases: (A) $\alpha < d(f)$, (B) $\alpha = d(f)$
and (C) $\alpha > d(f)$. This division into three cases is
equivalent to the exponent $[1 - \alpha/(\beta + tn)]$
in \eqref{II1-initial} being positive, zero and negative,
respectively. By Lemma \ref{IM-2},
case (B) implies $m_K(f) = d(f)$ and case (C)
implies $m_K(f) > d(f)$. Therefore in these cases,
we have $\alpha = h(f)$.

For case (A), we use \eqref{II1-initial} to see that
\begin{equation}\label{II1-A}
II_1 \ \le \ C q^{-s/d(f)}
\end{equation}
for some constant $C>0$ depending only on the
degree of $f$. For cases (B) and (C), we divide
$II_1 = II_{1,1} + II_{1,2}$ into two parts by
splitting the $k_1$ sum,
$$
II_{1,1} \ =
\sum_{
\begin{array}{c}\scriptstyle
t k_2 < r k_1, \ k_1 \le s/\alpha \\
         \vspace{-5pt}\scriptstyle s\le k_2(\beta+tn) + k_1\alpha
         \end{array} } q^{- k_1 - k_2}, \ \ \ \
II_{1,2} \ =
\sum_{
\begin{array}{c}\scriptstyle
t k_2 < r k_1, \ s/\alpha \le k_1 \\
         \vspace{-5pt}\scriptstyle s\le k_2(\beta+tn) + k_1\alpha
         \end{array} } q^{- k_1 - k_2} .
$$
For $II_{1,2}$ in cases (B) and (C), we simply use the restrictions
$k_1 \ge s/\alpha$ and $k_2 \ge 0$ to obtain
\begin{equation}\label{II12}
II_{1,2} \ \le \ C q^{-s/\alpha} \ = \ C q^{-s/h(f)}.
\end{equation}
For $II_{1,1}$ in cases (B) and (C), we use \eqref{II1-initial}
to see that
\begin{equation}\label{II11}
II_{1,1} \ \le \ C s q^{-s/h(f)} \ \ \ \ {\rm and}
\ \ \ \
II_{1,1} \ \le \ C q^{-s/h(f)},
\end{equation}
respectively.


For $II_2$, if ${\mathcal C}$ satisfies property (C3)' (the case
of polynomial congruences), then $II_2 =  0$ and so
we need to bound $II_2$ only for character sums and
in this case we are assuming $d(f)\ge 2$.
Using property (C3) to bound the integral in $II_2$
by $q^{-1/2}$, we obtain
$$
|II_2| \ \le \ C q^{-1/2} q^{-(s-1)/(\beta + tn)}
\sum_{t s/ N \le k_1 \le (s-1)/\alpha}
q^{-k_1[1 - \alpha/(\beta + t n)]}
$$
and splitting the analysis into cases (A), (B) and (C)
as above, we conclude that if $h(f)\ge 2$
(which is implied by our underlying assumption
$d(f)\ge 2$),
\begin{equation}\label{II2}
|II_2| \le C q^{-s/d(f)}, \ \ \
|II_2| \le C s q^{-s/d(f)} \ \ {\rm and} \ \
|II_2| \le C q^{-s/\alpha},
\end{equation}
respectively;
in fact the initial estimate for $II_2$ implies
$|II_2| \le C q^{-[1/2 - 1/d(f)]} q^{-s/d(f)}$,
$|II_2| \le C q^{-[1/2 - 1/d(f)]} s q^{-s/d(f)}$
and $|II_2| \le q^{-[1/2 - 1/h(f)]} q^{-s/h(f)}$
in the respective cases (A), (B) and (C).
This shows that if $h(f)>2$, then for any
$\epsilon>0$,
\begin{equation}\label{II2-error}
|II_2| \ \le \ \epsilon s^{i(f)} q^{-s/h(f)}
\end{equation}
if $q$ is large enough and this is the case
when ${\mathfrak p} \notin {\mathcal P}(f)$.
We will use \eqref{II2-error} for the proof
of \eqref{abstract-sum-infinite} in Theorem
\ref{main-abstract} (recall that for polynomial
congruences, $II_2 = 0$).

Putting \eqref{II1-A}, \eqref{II12}, \eqref{II11}
and \eqref{II2} together gives us the favourable upper
bound
\begin{equation}\label{II-poly-above}
|II| \ \le \ C \, s^{\nu(f)} q^{-s/h(f)}
\end{equation}
for ${\mathcal C} = {\bf 1}_{\bar{\mathfrak o}}$,
polynomial congruences, and
\begin{equation}\label{II-sum-above}
|II| \ \le \ C \, s^{i(f)} q^{-s/h(f)}
\end{equation}
for ${\mathcal C} = \psi$, character sums,
assuming $d(f)\ge 2$.



By the remark following the definition
of $II_2$, if
${\rm gcd}(\alpha, \beta + t n) = 1$,
then the estimates \eqref{II2} improve to
\begin{equation}\label{II2-above-improvement}
|II_2| \ \le \ q^{-1+1/h(f)} s^{i(f)} q^{-s/h(f)}
\end{equation}
and so in this case, \eqref{II2-error} holds
if $h(f)>1$.

Recall that in Section
\ref{I}, we successfully bounded $|I|$
from below (for character sums) for
infinitely many $s\ge 1$ for any
nonlinear $f \notin E_1$ satisfying $m_K(f) < d(f)$
or $m_K(f) \ge d(f)$ such that either $m_K(f)  > 2$
or $m_K(f) = d(f) = 2$. To bring $II$ in line with
these results let us observe that $II_2$
satisfies \eqref{II2-error} for infinitely many $s\ge 1$
when $f\notin E_1$ and when $m_K(f) < d(f) \le 2$ holds,
or $m_K(f) > 2 \ge d(f)$ holds or  $m_K(f) = d(f) = 2$ holds.
Here we will restrict to those $s$ satisfying
$s \equiv 0$ mod ${\rm gcd}(\alpha,\beta + tn)$.

We consider two situations. First, suppose
that ${\rm gcd}(\alpha,\beta + tn) \ge 2$. Then
$II_2 = 0$ since $s\equiv 0$ mod ${\rm gcd}(\alpha,\beta + tn)$
and so \eqref{II2-error} is trivially satisfied
in this case.
Second, suppose that ${\rm gcd}(\alpha,\beta + tn) = 1$
in which case we can use \eqref{II2-above-improvement}
and reduce to the situation where $d(f) \le 1$. Hence
we need only consider the cases when $m_K(f) < d(f)\le 1$
and when $m_K(f) > 2 > 1 \ge d(f)$ and we will show
that these situations cannot arise if $f\notin E_1$.

If it were the case that $m_K(f) > 2 > 1 \ge d(f)$, then there
would be a unique
root $\zeta_{*} \in K$ with multiplicity $n_{*} \ge 3$ and
hence
$3rt / (t+r) \le  d(f)  \le  1$
which is clearly impossible.
If is were the case that $m_K(f) < d(f) \le 1$,
then $m_K(f)$ must be zero and so there must be
at least two nonzero roots $\zeta, \zeta_{*} \notin K$
and hence $n\ge 2$. But this implies
$(t\alpha + r\beta + nrt)/(t+r) = d(f) \le 1$
which is impossible unless $\alpha = \beta = 0$,
$t=r=1$ and $n=2$. However this is precisely
the case when $f\in E_1$.

\subsection{Estimates for III}\label{III}
The analysis for the term III is the same
as for II.
We write III as
$$
III \ = \   \sum_{
\begin{array}{c}\scriptstyle
k_1, k_2 \ge 0\\
         \vspace{-5pt}\scriptstyle r k_1 < t k_2
         \end{array} } q^{-k_1-k_2}
\int\!\!\!\int_{|x|= 1, |y|=1} {\mathcal C}(\pi^{-s + k_1( \alpha + rn) +
k_2\beta} g_{k_1,k_2}(x,y)) \, d\mu(x) d\mu(y)
$$
where
$$
g_{k_1,k_2}(x,y) = b x^{\alpha + rn}y^{\beta} + \cdots + a
 \pi^{n(t k_2 - r  k_1)}
x^{\alpha} y^{\beta + t n}.
$$
By property (C2) we see that the above sum vanishes when
$k_1(\alpha+rn) + k_2\beta \le s-2$ and therefore $III = III_1 + III_2$
where
$$
III_1 \ = \ (1-q^{-1})^2 \sum_{
\begin{array}{c}\scriptstyle
r k_1 < t k_2 \\
         \vspace{-5pt}\scriptstyle s\le k_1(\alpha+rn) + k_2\beta
         \end{array} } q^{-k_1-k_2},
$$
using property (C1), and
$$
III_2 \ = \ \sum_{
\begin{array}{c}\scriptstyle
r k_1 <  t k_2 \\
         \vspace{-5pt}\scriptstyle  k_1(\alpha +rn)+ k_2\beta = s-1
         \end{array} } q^{-k_1-k_2}
\int\!\!\!\int_{|x|= 1, |y|=1} {\mathcal C}(\pi^{-1} b x^{\alpha+rn}y^{\beta}) \, d\mu(x) d\mu(y).
$$
The same estimates for II hold for III
with the same proofs. So we will only
state them. The following estimates hold:
\begin{equation}\label{III-poly-above}
|III| \ \le \ C s^{\nu(f)} q^{-s/h(f)}
\end{equation}
for polynomial congruences and
\begin{equation}\label{III-sum-above}
|III| \ \le \ C s^{i(f)} q^{-s/h(f)}
\end{equation}
for character sums, assuming $d(f) \ge 2$.
For polynomial congruences, $III_2 = 0$ and
for character sums, we have for every $\epsilon>0$,
\begin{equation}\label{III2-error}
|III_2| \ \le \ \epsilon s^{i(f)} q^{-s/h(f)}
\end{equation}
if $q$ large enough whenever $m_K(f)<d(f)$
or $d(f)\le m_K(f)$ with $2<m_K(f)$ or $2 = m_K(f) = d(f)$.

\subsection{The upper bounds in Theorem \ref{main-abstract}}\label{upper}
Since
$$
\int\!\!\!\int_{{\bar{\mathfrak o}}\times
{\bar{\mathfrak o}}} {\mathcal C}(\pi^{-s} f(x,y)) \, d\mu(x) d\mu(y) \ = \ I + II + III,
$$
we combine the estimates \eqref{I-conj-above},
\eqref{II-poly-above} and  \eqref{III-poly-above}
for ${\mathcal C} = {\bf 1}_{\bar{\mathfrak o}}$
to see that the upper bound in \eqref{main-poly-est-abstract}
holds for general quasi-homogeneous $f\notin E_m$ for any $m\ge 1$,
with the appropriate modifications, the Varchenko
exponent $\nu(f)$ replaced by $\nu_{\mathfrak p}(f)$,
when $f\in E_m$.

When ${\mathcal C} = \psi$, we combine
the estimates \eqref{I-conj-above},
\eqref{II-sum-above} and  \eqref{III-sum-above}
for ${\mathcal C} = \psi$
to see that the bound \eqref{main-sum-est-abstract} in Theorem \ref{main-abstract} holds for $f$ with $d(f)\ge 2$,
with the appropriate
modifications when $f\in E_m, m\ge 2$ lies in one of the exceptional classes. The case $f\in E_1$ for character sums is treated
in Section \ref{E1} below.

\subsection{The lower bounds \eqref{abstract-poly-infinite}
and \eqref{abstract-sum-infinite} in
Theorem \ref{main-abstract}}\label{infinite}
As we observed earlier, for the problem of
polynomial congruences, since ${\mathcal N}(f;\pi^s) \ge I$,
the bound \eqref{abstract-poly-infinite} in Theorem \ref{main-abstract}
follows from \eqref{I-poly-infinite} whenever $f$ does
not belong to any exceptional class $E_m, m\ge 1$. Similarly,
when $f\in E_m$ for some $m\ge 1$, we have
$$
{\mathcal N}(f;\pi^s) \ge I \ge c s^{\nu_{\mathfrak p}(f)} q^{-s/h(f)}
$$
for infinitely many $s\ge 1$,

For the problem of character sums, we will show that
\eqref{abstract-sum-infinite} holds
for any nonlinear $f\notin E_m, m\ge 1$
whenever $m_K(f) < d(f)$
or whenever $d(f)\le m_K(f)$ and either $2<m_K(f)$ or $2 = m_K(f) = d(f)$. The case $f\in E_1$ is treated in
Section \ref{E1} and the remaining cases will be treated in Section \ref{two-cases}. The argument establishing \eqref{I-sum-infinite}
showed $I = P + E$ where $P\ge 0$,
$P \ge c s^{i(f)} q^{-s/h(f)} \ge 0$ and $|E| \le (1/2) P$
for infinitely many $s\ge 1$. Hence
$I + II + III  = [P + II_1 + III_1] + [E + II_2 + III_2]$
and by
\eqref{II2-error},
\eqref{III2-error} and the fact that
$II_1, III_1 \ge 0$, we see that for infinitely many
$s\ge 1$,
$$
|I + II + III| \ge P - |E| - |II_2| - |III_2| \ge
(c/2) \, s^{i(f)} q^{-s/h(f)} - (c/4) s^{i(f)} q^{-s/h(f)}
$$
if $q$ large enough. The same argument shows that
$$
{\mathcal S}_{\chi}(f;\pi^s) = |I + II + III| \ge c s^{i_{\mathfrak p}(f)} q^{-s/h(f)}
$$
for infinitely many $s\ge 1$ whenever $f\in E_m$ for
some $m\ge 2$.

\subsection{The lower bound in \eqref{main-poly-est-abstract}
of Theorem \ref{main-abstract}}\label{I+II+III}
To complete the proof of Theorem \ref{main-abstract}
for polynomial congruences, we need to
establish the lower bound in \eqref{main-poly-est-abstract},
with the appropriate modifications when $f\in E_m$ for
some $m\ge 1$.
This is a bound for the number of polynomial congruences
${\mathcal N}(f;\pi^s) = I + II + III$ so
$I = I_1 + I_{2,1}$, $II = II_1$ and $III = III_1$ and
of course each of the terms are nonnegative. In the cases
$m_K(f) > d(f)$ and $m_K(f) = d(f)$ (the
two cases where ${\mathcal I}_{\mathfrak p}$ is
necessarily nonempty), the lower
bound in \eqref{main-poly-est-abstract} follows from
\eqref{I21j-below}; in fact in these cases,
the factor $q^{-2}$ can be replaced
by $q^{-1}$. When $f\in E_m$ for some $m\ge 1$
where the conjugate roots $\zeta, \zeta^{*}$
lie in ${\bar K}_{\mathfrak p}$, then
$\nu_{\mathfrak p}(f) = 1$, ${\mathcal I}_{\mathfrak p}
= \{1,2\}$, and \eqref{I21j-below}
again implies ${\mathcal N}(f;\pi^s) \ge c s q^{-s/h(f)} q^{-1}$
in this case.

In order to establish \eqref{main-poly-est-abstract} in case $m_K(f) < d(f)$, we need a bound from below
for
$$
I_1 + II_1 + III_1 \ = \ (1-q^{-1})^2
\sum_{
\begin{array}{c}\scriptstyle
k_1, k_2 \ge 0 \\
         \vspace{-5pt}\scriptstyle
         s\le k_1\alpha + k_2\beta
         + \min(rk_1, t k_2) n
         \end{array} }
 q^{- k_1 - k_2}.
$$
We claim that the
uniform bound
\begin{equation}\label{I+II}
c q^{-s/d(f)} q^{-2} \ \le \ I_1 + II_1 + III_1
\end{equation}
holds which will complete the proof of
\eqref{main-poly-est-abstract} in Theorem \ref{main-abstract}
when $f\notin E_m$ for any $m\ge 1$.
Writing $s\ge 1$ as $s = N m_{*} + T$ for some integers
$m_{*}\ge 0$ and $0\le T \le N-1$,
we define the integer $0\le L < t$ so that $L-1 < (Tt)/N \le L$.
With $L$, we define the integers
$k_1^{*} := t m_{*}  + L$ and $k_2^{*}$ so that
$$
k_2^{*} - 1 \ < \ r m_{*}  +
\frac{T}{\beta + tn} - \frac{\alpha L}{\beta+tn}
\ \le \ k_2^{*} .
$$
One easily checks that the integer $k_2^{*}$ defined
above is nonnegative.
We consider two cases: when $ t k_2^{*} \le r k_1^{*}$
and when $ r k_1^{*} < t k_2^{*}$. In the first case,
we see from the definition $k_2^{*}$,
$s \le k_1^{*} \alpha + k_2^{*}(\beta + t n)$.
In the second case one checks
that $s \le k_1^{*}(\alpha + rn) + \beta k_2^{*}$ hold,
or equivalently,
$$
\frac{T - (\alpha + rn)L}{\beta} \ \le \
\frac{T - \alpha L}{\beta + t n}
$$
which boils down to $(T n)/N \le L$.
Hence in either case,
$$
I_1 + II_1 + III_1 \gtrsim q^{-k_1^{*} - k_2^{*}} \ge
q^{-s/d(f)} q^{T/d(f)} q^{- [T-\alpha L]/(\beta + tn) -L -1} \ge q^{-s/d(f)} q^{\alpha/(\beta + tn) - 2}
$$
which gives the bound in \eqref{I+II}. Here we used
$(T t)/ N \ge L-1$. When $f\in E_m$ for some $m\ge 1$
where the conjugate roots $\zeta, \zeta^{*}$ do not
lie in ${\bar K}_{\mathfrak p}$, then $\nu_{\mathfrak p}(f) = 0$,
${\mathcal I}_{\mathfrak p} = \emptyset$ and the bound
\eqref{I+II} implies that ${\mathcal N}(f;\pi^s) \ge c q^{-s/h(f)}
q^{-2}$ in this case.

Let us look now at the example
$f(x,y) = y^4 - 2 x^6 \in {\Bbb Z}[X,Y]$ mentioned
after the statement of Theorem \ref{main-cong}. Here
$h(f) = 12/5$, $\nu(f) = 0$ and when we restrict to
$s\equiv 1$ mod 12, we have
$$
I_1 + II_1 + III_1 = (1-q^{-1})^2
\sum_{s\le 2\min(2k_1,k_2)} q^{-k_1-k_2}
\le c q^{-5s/12} q^{-19/12} .
$$
Furthermore if $p \equiv 3$ or $5$ mod $8$,
then $\pm \sqrt{2} \notin {\Bbb Q}_p$ and
so $I_{2,1} = 0$. Therefore for these values of $p$,
$$
{\mathcal N}(f;p^s) = I + II + III = I_1 + I_{2,1}
+ II_1 + III_1 = I_1 + II_1 + III_1
$$
and so  ${\mathcal N}(f;p^s) \le c q^{-s/h(f)} q^{-19/12}$
when $s\equiv 1$ mod 12. This illustrates that
we cannot replace the factor $q^{-2}$ with $q^{-1}$
in the lower bound \eqref{basic-cong} or
\eqref{main-poly-est-abstract}
of Theorems \ref{main-cong} and \ref{main-abstract}.

\subsection{Estimates for $f \in E_1$}\label{E1}
Here we treat separately the case of character
sums ${\mathcal S}_{\chi}(f;\pi^s)$ when $f\in E_1$; that is,
when $f(x,y) = a (y - \zeta x)(y - \zeta^{*}x)$
where $\zeta, \zeta^{*}$ are conjugate roots of degree
2 over $K$. For such $f$, $h(f) = d(f) = 1$ and the claimed estimates
for ${\mathcal N}(f;\pi^s)$ in Theorem \ref{main-abstract}
have already been established; namely,
$$c s q^{-s} q^{-2} \ \le \ {\mathcal N}(f;\pi^s) \ \le
C s q^{-s}
$$
and $c s q^{-s} \le {\mathcal N}(f;\pi^s)$ when $s\equiv 0$
mod 2.

The estimates for character sums
${\mathcal S}(f;\pi^s)$
when $f\in E_1$ are
different from those for polynomial congruences;
the uniform upper bound \eqref{main-sum-est-abstract} in Theorem \ref{main-abstract} is
\begin{equation}\label{E1-sum-upper}
|S_{\chi}(f; \pi^s)| \ \le \ C q^{-s}
\end{equation}
whenever ${\mathfrak p}\notin {\mathcal P}(f)$. Furthermore
the estimate \eqref{abstract-sum-infinite} reads
that for infinitely many $s\ge 1$,
\begin{equation}\label{E1-sum-infinite}
c q^{-s} \ \le \ |S_{\chi}(f;\pi^s)|
\end{equation}
holds whenever ${\mathfrak p}\notin {\mathcal P}(f)$.
The upper bound \eqref{E1-sum-upper} follows from
the work of Denef and Sperber \cite{DS} since
$f$ is nondegenerate with respect to its Newton
diagram (see also
the work of Cluckers \cite{C} for the abstract setting
of general local fields). Strictly speaking the
estimate \eqref{E1-sum-infinite}
does not follow from the work of
Denef and Sperber since the vertices $\{(0,2),(2,0)\}$
of the Newton polygon of $f$ lie in $\{0,1,2\}^2$.
Nevertheless we can see that \eqref{E1-sum-infinite}
holds from our analysis above. Recall our basic decomposition
$S_{\chi}(f;\pi^s) = I + II + III$ where $I = I_1 + I_2$,
$II = II_1 + II_2$ and $III = III_1 + III_2$; furthermore,
$II_2 = 0$ when $s \equiv 0$ mod ${\rm gcd}(\alpha, \beta + tn)$
and $III_2 = 0$ when $s \equiv 0$ mod ${\rm gcd}(\beta,\alpha + rn)$ (here ${\rm gcd}(\alpha, \beta + tn) = {\rm gcd}(\beta,\alpha + rn)
=2$ in our case $f\in E_1$). Due to the nondegeneracy of $f$,
we also have $I_2 = 0$ if $s\equiv 0$ mod 2. This follows
by the same argument establishing property (C2) for character sums,
adapted to double sums; in fact if $s\equiv 0$ mod 2, then
$\sigma := s - 2m \ge 2$ in the $m$ sum defining $I_2$ and
so if we write ${\vec u} := (x,z) \le \pi^{\sigma}{\bar{\mathfrak o}}$
(using our shorthand notation introduced in section \ref{completion})
as ${\vec u} = {\vec v} + \pi^{\sigma -1} {\vec w}$
with ${\vec v} \le \pi^{\sigma -1}{\bar{\mathfrak o}}$
and ${\vec w} \le \pi{\bar{\mathfrak o}}$, then
$\phi({\vec u}) \equiv \phi({\vec v}) + \pi^{\sigma -1}
\nabla \phi({\vec v})
\cdot {\vec w}$ mod
$\pi^{2\sigma - 2}{\bar{\mathfrak o}}$ which in turn
is equivalent mod $\pi^{\sigma}$ since $\sigma\ge 2$
(here $\phi(x,z) = h(z) x^2$). Also $\pi \not| \
{\vec u}$ is equivalent to $\pi \not| \ {\vec v}$ and
when this happens, $\pi \not| \ \nabla\phi({\vec v})$.
Therefore
$$
I_2 = \sum_{\sigma\ge 2} q^{-2(s-\sigma)} q^{-2\sigma} \sum_{
\begin{array}{c}\scriptstyle
{\vec v} \le \pi^{\sigma-1}{\bar{\mathfrak o}} \\
         \vspace{-5pt}\scriptstyle  \pi \not| \ {\vec v}
         \end{array} } \chi'(\pi^{-\sigma} \phi({\vec{v}}))
\sum_{
{\vec w} \le \pi{\bar{\mathfrak o}}}
\chi'(\pi^{-1}\nabla\phi({\vec v})\cdot {\vec w}) \ = \ 0
$$
as in the verification of property (C2) for character sums.
Since $II_1, III_1 \ge 0$, \eqref{E1-sum-infinite}
follows from \eqref{I1-infinite} which holds for $s\equiv 0$
mod 2 since $N=2$ in this case.

\section{The case $d(f) < 2$ for character sums}\label{two-cases}
Here we consider the character sums (or oscillatory integrals, see
\eqref{osc-S-chi})
$$
{\mathcal S}_{\chi}(f;{\mathfrak p}^s) \ = \ \int\!\!\!\int_{{\bar{\mathfrak o}}
\times{\bar{\mathfrak o}}} \psi(\pi^{-s} f(x,y)) d\mu(x) d\mu(y)
$$
when $d(f)<2$ and when $f$ consists of more than
one monomial.
We note that in this case the exceptional classes $E_m$ for $m\ge 2$
do not arise. In fact
if $f\in E_m$, then $d(f) = m, \, m_K(f)=0$ and $h(f)=m$.
If furthermore $d(f)<2$, then this forces
$f\in E_1$ and
such an $f$ does {\it not} belong to the exceptional class
for the character sum estimate \eqref{main-sum-est-abstract}
in Theorem \ref{main-abstract}. However such an $f$ {\it does} belong
to the exceptional class for the
polynomial congruences estimate \eqref{main-poly-est-abstract}.
In this case, the Varchenko
exponent $\nu(f) = \nu_{\mathfrak p}(f)$ depends
on the prime ideal ${\mathfrak p}$ as described in Theorem
\ref{main-abstract}.

To be quite specific, our goal here is to establish
the estimates \eqref{main-sum-est-abstract} and
\eqref{abstract-sum-infinite} in Theorem \ref{main-abstract}
when $d(f)<2$. In fact
we
need only establish \eqref{abstract-sum-infinite}
when $f\notin E_1$ (this case was already treated in
Section \ref{E1}), when $f$ is not linear and when $d(f) \le m_K(f) \le 2$
with $d(f)<2$; see Section \ref{infinite}.

We observe that when $d(f)<2$ the exponent
$i(f)$ is equal to zero even if a vertex of the Newton diagram
of $f$ lies on the bisectrix. In fact if $h(f)<2$, then
$i(f)=0$ by definition and if $d(f) < 2 \le h(f)$, then $m_K(f) \not= d(f)$
and so again $i(f)=0$.


When $d(f)<2$,
the list of possibilities
for $f$ is small and in the subcase $h(f)<2$, it
turns out that $f$ is nondegenerate with respect
to its Newton diagram so we can
appeal to the work of Denef and Sperber \cite{DS} or
Cluckers \cite{C-2}
to establish
the estimate \eqref{main-sum-est-abstract} in this case
(alternatively we can follow the arguments in the
previous sections, noting improved
finite field character sums at the appropriate places).
Strictly speaking the estimates in \cite{DS} or \cite{C-2}
carry a linear factor of $s$ when a vertex of the Newton
diagram lies on the bisectrix. However we will see that when
$d(f)<2$ and $f$ is not a monomial, this
only happens if $f(x,y) = a x (y - \zeta x^r)$ for some
$r\ge 1$ or
$f(x,y) = a y (y- \zeta x)$ where $\zeta$ is nonzero and lies
in $K$ (these are the only such cases which
arise under our assumption $\kappa_1\le \kappa_2$;
in general we should swap $x$ and $y$ and include
$f(x,y) = a y(x - \zeta y^r)$ for any $r\ge 1$).

In these two cases $h(f) = d(f) = m_K(f) = 1$ and
(for $f(x,y) = a x(y-\zeta x^r)$, say) \ \
${\mathcal S}_{\chi}(f;{\mathfrak p}^s) = $
$$
\int_{|x|\le 1} \psi(\pi^{-s}a\zeta x^{r+1}) d\mu(x)
\int_{|y|\le 1} \psi(\pi^{-s} a x y) d\mu(y) = \int_{|x|\le q^{-s}}
\psi(\pi^{-s} a \zeta x^{r+1}) d\mu(x).
$$
The last integral equals $q^{-s}$ since
$|\pi^{-s} a \zeta x^{r+1}| \le q^{s}q^{-s(r+1)} \le 1$ when
$|x| \le q^{-s}$ and this
implies the claimed estimates \eqref{main-sum-est-abstract}
and \eqref{abstract-sum-infinite} in this case. A similar identity holds for $f(x,y) =
a y (y -\zeta  x)$.

We now list of possibilities for $f$
when $d(f)<2$ and $f$ is not a monomial.
Writing
$$
f(x,y) \ = \ a x^{\alpha}y^{\beta} \prod_{j=1}^M (y^t - \zeta_j x^r)^{n_j}
$$
as in \eqref{f} of Section \ref{prelim},
then $d(f)<2$ implies
\begin{equation}\label{restriction}
d(f) \ = \ \frac{t\alpha + r \beta + r t n}{r+t} \ < \ 2 \ \ \  {\rm or} \
\ \ t\alpha  +  r\beta + rtn \ \le 2r + 2t -1
\end{equation}
and this restricts the size of $n = \sum_{j\ge 1} n_j$, the total
number  of {\it nonzero} roots counted with multiplicities; we
necessarily have $1\le n \le 3$.

We enumerate the cases by the possible values of $n$,
starting with $n=3$. In this case we see from
\eqref{restriction} that necessarily $\alpha = \beta = 0$
and $t = r = 1$. This leads to the only possibilities
for $f$ being
\begin{equation}\label{n3}
f(x,y) \ = \ a (y - \eta x) (y - \zeta x) (y - \zeta^{*} x)
\end{equation}
where $\eta \in K$ is nonzero and either $\zeta$ and $\zeta^{*}$ are conjugate
elements of degree 2 over $K$ or both $\zeta$ and $\zeta^{*}$ are
elements of $K$.

Next we turn to the case $n=2$. In this case we see from
\eqref{restriction} that necessarily $t=1$ and
$0\le \alpha, \beta \le 1$ with at least
one equal to zero. This leads
to the only possibilities being
\begin{equation}\label{n2}
f(x,y) \ = a x^{\alpha} y^{\beta} (y - \zeta x^r)(y - \zeta^{*} x^r)
\end{equation}
with the above restriction on $\alpha,\beta$
and either $\zeta, \zeta^{*}$ are conjugate elements
of degree 2 over $K$ or the roots
$\zeta$ and $\zeta^{*}$ both
belong to $K$.
Finally we turn to the case $n=1$ where we have a single
nonzero root $\zeta$ lying in $K$ and so $f$ must be of the form
\begin{equation}\label{n1}
f(x,y) \ = \ a x^{\alpha} y^{\beta} (y^t - \zeta x^r).
\end{equation}
From \eqref{restriction} we see that $1\le t\le 3$
and $0\le \alpha,\beta \le 1$; if $t=3$,
then necessarily $r=4$ or $5$ and $\alpha=\beta=0$.
If $t=2$, then either $\alpha$ or $\beta$ (or both) is zero.
Furthermore when $t=2$, if
$\beta \not= 0$, then necessarily $\beta = 1$
and $r=3$.

We treat each case above separately. When $f$ is of the
form \eqref{n3}, then it is nondegenerate with respect
to its Newton diagram if the roots $\zeta, \zeta^{*}$
form a conjugate pair of degree 2 over $K$ or they
are distinct roots in $K$. In these cases $d(f) = 3/2 = h(f)$,
$m_K(f) = 1$
and the estimates \eqref{main-sum-est-abstract}
and \eqref{abstract-sum-infinite} follow
from the results in \cite{DS} and \cite{C-2}.
Strictly speaking the lower bound \eqref{abstract-sum-infinite}
is only shown in \cite{DS} in the setting of the
integers ${\Bbb Z}$. However since $m_K(f) = 1 < 3/2 = d(f)$,
the lower bound \eqref{abstract-sum-infinite} has already
been established in Section \ref{infinite}.

The remaining cases for \eqref{n3} are
when the roots $\zeta =\zeta^{*}$
coincide and lie in $K$. In this case
$d(f) = 3/2 < 2 \le m_K(f) = h(f)$ and
so we need to establish both bounds,
\eqref{main-sum-est-abstract} and \eqref{abstract-sum-infinite}.
We first consider
$$
f(x,y) \ = \ a (y-\eta x)(y - \zeta x)^2
$$
where $\zeta \not= \eta$; here $d(f)= 3/2$
and $m_K(f) =  h(f) = 2$. In this case we
make the change of variables $z = y - \zeta x$ in the $y$
integral \eqref{osc-S-chi}
representing the sum ${\mathcal S}_{\chi}$ and write
$$
{\mathcal S}_{\chi}(f;{\mathfrak p}^s) \ = \
\int\limits_{|x|\le 1,} \ \int\limits_{|z+\zeta x|\le 1}
\psi(\pi^{-s} a z^2(z-\zeta^{\prime} x)) d\mu(z)d\mu(x)
$$
$$
\ = \
\int\limits_{|z|\le 1} \psi(\pi^{-s} a z^3) d\mu(z)
\int\limits_{|x|\le 1} \psi(-\pi^{-s}\zeta^{\prime} z^2 x)
d\mu(x)
$$
where $\zeta^{\prime}  = \eta - \zeta$. Recall that
$|\zeta| = |\eta-\zeta|=1$ when ${\mathfrak p} \notin
{\mathcal P}(f)$ so that when $|z|\le 1$, $|x+\zeta^{-1} z| \le 1$
if and only if $|x|\le 1$.
The $x$ integral can be evaluated leading to the identity
${\mathcal S}_{\chi}(f;{\mathfrak p}^s) = q^{-s/2}$ if
$s$ is even and ${\mathcal S}_{\chi}(f;{\mathfrak p}^s) =
q^{-s/2} q^{-1/2}$ if $s$ is odd. From these identities,
we see that \eqref{main-sum-est-abstract} and
\eqref{abstract-sum-infinite}
hold.

The last case for \eqref{n3} is when $f(x,y) = a(y - \zeta x)^3$
and here $d(f) = 3/2 < m_K(f) = h(f) = 3$. A straightforward
computation shows ${\mathcal S}_{\chi}(f;{\mathfrak p}^s) = q^{-s/3}$
if $s\equiv 0 $ mod $3$,
${\mathcal S}_{\chi}(f;{\mathfrak p}^s) =
q^{-s/3}q^{-1/3}$ if $s\equiv 2$ mod $3$ and
$$
{\mathcal S}_{\chi}(f;{\mathfrak p}^s) \ = \ q^{-s/3}q^{1/3}\,
{\mathcal S}_{\chi}(f;{\mathfrak p})
$$
if $s\equiv 1$ mod $3$. Since
$$
{\mathcal S}_{\chi}(f;{\mathfrak p}) \ = \
\int_{|x|\le 1} \psi(\pi^{-1} a x^3) d\mu(x),
$$
we have $|{\mathcal S}_{\chi}(f;{\mathfrak p})| \le C q^{-1/2}$
from property $(C3)$ in Section
\ref{homogeneous} for character sums and so the estimate
\eqref{main-sum-est-abstract} holds in this case. Considering
the sequence $s=3k$ shows that \eqref{abstract-sum-infinite}
also holds in this case.

We now turn to those $f$ in \eqref{n2} where $f$
is nondegenerate with respect to its Newton diagram
unless the roots $\zeta, \zeta^{*}$ coincide
and lie in $K$. In the nondegenerate case,
the estimate \eqref{main-sum-est-abstract} follows
again from \cite{DS} or \cite{C-2}. For the lower
bound \eqref{abstract-sum-infinite}, we note that
$m_K(f)\le 1 <
2r/(r + 1) \le d(f)$ unless $\alpha = \beta = 0$ and $r=1$
in which case $m_K(f) = 0 < 1 = d(f)$. In either case
$m_K(f) < d(f) <2$ and so \eqref{abstract-sum-infinite}
follows from Section \ref{infinite}.

When $\zeta=\zeta^{*} \in K$,
we have $f(x,y) = a x^{\alpha}y^{\beta}
(y - \zeta x^r)^2$ where $0\le \alpha, \beta \le 1$,
not both of which are $1$. If $\alpha = \beta = 0$, a  simple
change of variables shows
$$
{\mathcal S}_{\chi}(f;{\mathfrak p}^s) \ = \
\int_{|x|\le 1} \psi(\pi^{-s} a  x^2) d\mu(x)
$$
and the integral above has modulus equal to $q^{-s/2}$
(we are assuming the characteristic of $K$, if positive,
is greater than $2$ in this case and so the element
$2 = 2\cdot {\bf 1}$ is nonzero; furthermore, we ensure that
the nonzero $2 = 2\cdot {\bf 1}$ lies in our collection
of algebraic elements ${\mathcal A}$
so that $|2| = 1$ whenever ${\mathfrak p} \notin
{\mathcal P}(f)$). If either $\alpha$ or
$\beta$ equals to $1$, then a computation similar to the
ones performed above shows that ${\mathcal S}(f;{\mathfrak p}^s)
= q^{-s/2}$ is $s$ is even and equal to $q^{-s/2}q^{-1/2}$
when $s$ is odd. In each case we see that both
\eqref{main-sum-est-abstract} and \eqref{abstract-sum-infinite}
hold.

Every $f$ arising in \eqref{n1} is nondegenerate
with respect to its Newton diagram and so \cite{DS}
or \cite{C-2} shows that \eqref{main-sum-est-abstract}
holds
for each such $f$ except $f(x,y) = a y (y-\zeta x)$ or
$f(x,y) = a x (y - \zeta x^r)$ where the bisectrix
passes through the vertex $(1,1)$ of the Newton diagram.
We treated these special cases at the beginning
of this subsection, noting the linear
factor $s$ does not arise in the estimates as predicted
by Theorem \ref{main-abstract}.
As for the lower bound \eqref{abstract-sum-infinite},
we need only verify this bound when $d(f)\le m_K(f) \le 2$
and $d(f) < 2$; the remaining cases have been
treated in Section \ref{infinite}. One easily checks
that $f(x,y) = a y^2 (y - \zeta x)$ with $\zeta \in K\setminus\{0\}$
is the only example in \eqref{n1} satisfying these conditions.
In this case the oscillatory integral in \eqref{osc-S-chi} becomes
$$
\int_{|y|\le 1} d\mu(y) \int_{|x|\le 1}
\psi(\pi^{-s} a y^2 (y - \zeta x)) d\mu(x)
=
\int_{|y|\le 1} d\mu(y) \int_{|z|\le 1}
\psi(\pi^{-s} a y^2 z) d\mu(z)
$$
using the change of variables $z = y - \zeta x$
in the $x$ integral and noting $|\zeta| = 1$
whenever ${\mathfrak p} \notin {\mathcal P}(f)$.
The integral on the right hand side is equal
to $q^{-s/2}$ if $s\equiv 0$ mod 2 and
$q^{-s/2}q^{-1/2}$ if $s\equiv 1$ mod 2. Since
$d(f) = 3/2 < m_K(f) = 2 = h(f)$, we see that
\eqref{abstract-sum-infinite} holds when $s\equiv 0$
mod 2 in this case.

\section{Appendix: the case when $f(x,y) = a x^{\alpha}y^{\beta}$ is a monomial}\label{monomial}

For completeness we treat
the simple case when $f(x,y) = a x^{\alpha} y^{\beta}$
is a single monomial and give a quick
analysis of the integrals
$$
I_{\alpha,\beta}  \ := \ \int\!\!\!\int_{{\bar{\mathfrak o}}\times
{\bar{\mathfrak o}}} {\mathcal C}(\pi^{-s} a x^{\alpha} y^{\beta}) d\mu(x)
d\mu(y)
$$
where ${\mathcal C}$ is either $\psi$,
the additive character on ${\bar{\mathfrak o}}$ so that
$I_{\alpha,\beta} = {\mathcal S}_{\chi}(f; {\mathfrak p}^s)$ is a character sum over the factor ring
${\mathfrak o}/{\mathfrak p}^s$, or it is equal to
the indicator function ${\bf 1}_{{\bar{\mathfrak o}}}$ of
${\bar{\mathfrak o}}$ so that $I_{\alpha,\beta} = {\mathcal N}(f;{\mathfrak p}^s)$ counts the
number of polynomial congruences $f(x,y) \equiv 0$
mod ${\mathfrak p}^s$. Since $f$ is quasi-homogeneous,
at least one exponent
$\alpha$ or $\beta$ is nonzero.
Also $|a| = |a|_{\mathfrak p} = 1$
for ${\mathfrak p} \notin {\mathcal P}(f)$.
In this case the height $h(f)$ is equal to $\max(\alpha,\beta)$
and $\nu(f) = 1$ or $0$ depending on whether $\alpha=\beta$
or not, respectively. The same is true for $i(f)$ except
when $\alpha = \beta = 1$ we have $i(f) = 0$ (in this case,
$\nu(f) = 1$).

When $f$ is linear, that is, when $f(x,y) = ax$
or $f(x,y) = a y$, we have $h(f) = 1$,
$i(f) = \nu(f) =0$, ${\mathcal S}_{\chi}(f;\pi^s) = 0$
and ${\mathcal N}(f;\pi^s) = q^{-s}$ so that
the bounds \eqref{main-sum-est-abstract},
\eqref{main-poly-est-abstract} and \eqref{abstract-poly-infinite}
trivially hold in this case (recall that the lower
bound \eqref{abstract-sum-infinite} holds in all
cases except when $f$ is linear in which case it
cannot possibly hold).

When $f(x,y) = a x y$, we have $h(f) = 1$, $i(f) = 0$
and $\nu(f) = 1$. In this case, ${\mathcal S}_{\chi}(f;\pi^s) = q^{-s}$
and ${\mathcal N}(f;\pi^s) = (1 - q^{-1}) s q^{-s} + q^{-s}$;
see below for this computation. Hence the estimates
in Theorem \ref{main-abstract} all hold in this case.

Therefore we may assume that $h(f) = \max(\alpha,\beta)\ge 2$.
Without loss of generality, suppose that $\alpha \le \beta$.
We decompose $I_{\alpha,\beta} =$
$$
\int\!\!\!\int_{{\bar{\mathfrak o}}\times
{\bar{\mathfrak o}}} {\mathcal C}(\pi^{-s} a x^{\alpha}y^{\beta}) \, d\mu(x) d\mu(y) =
\sum_{k \ge 0} q^{-k} \int_{|y|=1} d\mu(y)
\int_{|x|\le 1}
{\mathcal C}(\pi^{-s+\beta k} [a y^{\beta}] x^{\alpha})\, d\mu(x)
$$
$$
= (1-q^{-1}) \sum_{\beta k \ge s} q^{-k}
\ \  + \ \  \sum_{\beta k \le s-1} q^{-k} \int_{|y|=1} \int_{|x|\le 1}
{\mathcal C}(\pi^{-(s-\beta k)} [a y^{\beta}] x^{\alpha}) d\mu(x)
d\mu(y).
$$
If $\alpha = \beta$, then we can make the change of
variables $z = y x$ in the $x$ integral so that
$$
I_{\beta,\beta} = (1 - q^{-1}) \bigl[\sum_{\beta k \ge s} q^{-k} \ + \
\sum_{\beta k \le s-1}
q^{-k} \int_{|z|\le 1} {\mathcal C}(\pi^{-(s-\beta k)} a z^{\beta}) d\mu(z)\bigr]
$$
and the $z$ integral vanishes when $\beta = 1$ and
${\mathcal C} = \psi$. Furthermore the $z$ integral is equal to $q^{-(s-k)}$ when $\beta = 1$ and ${\mathcal C} =
{\bf 1}_{\bar{\mathfrak o}}$. This gives that values
of ${\mathcal S}_{\chi}(f;\pi^s)$ and ${\mathcal N}(f;\pi^s)$
for $f(x,y) = a x y$ mentioned above.
For $\beta \ge 2$, the $z$ integral is equal to $q^{-s/\beta}q^{k}$
when $s\equiv 0$  mod $\beta$ and this holds for
both polynomial congruences, ${\mathcal C} = {\bf 1}_{\bar{\mathfrak o}}$, and character sums, ${\mathcal C} = \psi$.
This shows that both \eqref{abstract-poly-infinite}
and \eqref{abstract-sum-infinite} hold for $s\equiv 0$
mod $\beta$ when $\alpha = \beta \ge 2$.
For general $s\ge 1$, we have the upper bound
$$
\bigl|
\int_{|z|\le 1} {\mathcal C}(\pi^{-(s-\beta k)} a z^{\beta}) d\mu(z)
\bigr| \ \le \ q^{-s/\beta} q^{k}
$$
for the $z$ integral, valid
for both ${\mathcal C} = {\bf 1}_{\bar{\mathfrak o}}$
or ${\mathcal C} = \psi$. This gives the upper bounds
in \eqref{main-sum-est-abstract} and \eqref{main-poly-est-abstract}
when $\alpha = \beta \ge 2$. Finally we note that when
${\mathcal C} = {\bf 1}_{\bar{\mathfrak o}}$, the $z$
integral has the lower bound $q^{-s/\beta}q^k q^{-1}$
for general $s\ge 1$ and this gives the lower bound
in \eqref{main-poly-est-abstract} in this case.

Finally we turn to treat the case $h(f) = \max(\alpha,\beta)\ge 2$
and $\alpha < \beta$. We will assume $\alpha \ge 1$;
the case $\alpha = 0$ is easier. As in the case for $\alpha = \beta$,
we have the upper bound
$$
\Bigl|
\int_{|y|=1} \int_{|x|\le 1}
{\mathcal C}(\pi^{-(s-\beta k)} [a y^{\beta}] x^{\alpha}) d\mu(x)
d\mu(y) \Bigr| \ \le \ C
q^{-(s-\beta k)/\alpha}
$$
and this leads to the upper bounds in \eqref{main-sum-est-abstract}
and \eqref{main-poly-est-abstract} for the case $\alpha < \beta$.
Furthermore, if $s = m_{*} \beta$ for some $m_{*}\ge 1$,
this upper bound implies
$$
\Bigl|
\sum_{k \le m_{*}-1} q^{-k} \int_{|y|=1} \int_{|x|\le 1}
{\mathcal C}(\pi^{-(s-\beta k)} [a y^{\beta}] x^{\alpha}) d\mu(x)
d\mu(y)
\Bigr| \ \le \ C q^{-s/\beta} q^{-([\beta/\alpha] - 1)}
$$
and therefore, when $s\equiv 0$ mod $\beta$,
 $|I_{\alpha,\beta}| \ge c q^{-s/\beta}$
if $q$ is large enough and this gives the lower
bounds \eqref{abstract-poly-infinite} and \eqref{abstract-sum-infinite}
in this case. Finally we observe that when
${\mathcal C} = {\bf 1}_{\bar{\mathfrak o}}$,
the lower bound
$$
\int_{|y|=1} \int_{|x|\le 1}
{\mathcal C}(\pi^{-(s-\beta k)} [a y^{\beta}] x^{\alpha}) d\mu(x)
d\mu(y) \ \ge \
q^{-(s-\beta k)/\alpha}q^{-1}
$$
leads to the lower bound in \eqref{main-poly-est-abstract}
for the case $\alpha < \beta$ and $h(f) = \max(\alpha,\beta)\ge 2$.

This completes our analysis for the monomial
case $f(x,y) = a x^{\alpha} y^{\beta}$ and hence
this completes the proof of Theorem \ref{main-abstract}.


\end{document}